\theoremstyle{plain}
\newtheorem{coro}{Corollary}
\newtheorem{theo}[coro]{Theorem}
\newtheorem{prop}[coro]{Proposition}
\newtheorem{lemm}[coro]{Lemma}
\renewcommand{\leq}{\leqslant}
\renewcommand{\geq}{\geqslant}
\begin{document}
\normalem
\title{\bf Percolation probability and critical exponents\\ for site percolation on the UIPT}
\author{{Laurent Ménard}\footnote{{New York University Shanghai, 200122 Shanghai, China,} and {Modal'X, UMR CNRS 9023, UPL, Univ. Paris Nanterre, F92000 Nanterre, France}. \href{mailto:laurent.menard@normalesup.org}{laurent.menard@normalesup.org}}}
%\author{\textsc{Laurent Ménard}\footnote{laurent.menard@normalesup.org}}
\date{January 27, 2022}
\maketitle

%%%----------------------------------------------------------------------------------------------------------

\begin{abstract}
We derive three critical exponents for Bernoulli site percolation on the on the Uniform Infinite Planar Triangulation (UIPT). First we compute explicitly the probability that the root cluster is infinite. As a consequence, we show that the off-critical exponent for site percolation on the UIPT is $\beta = 1/2$. Then we establish an integral formula for the generating function of the number of vertices in the root cluster. We use this formula to prove that, at criticality, the probability that the root cluster has at least $n$ vertices decays like $n^{-1/7}$. Finally, we also derive an expression for the law of the perimeter of the root cluster and use it to establish that, at criticality, the probability that the perimeter of the root cluster is equal to $n$ decays like $n^{-4/3}$. Among these three exponents, only the last one was previously known. Our main tools are the so-called gasket decomposition of percolation clusters, generic properties of random Boltzmann maps, as well as analytic combinatorics.

\bigskip

\noindent{\bf MSC 2010 Classification:}
05A15, %    Exact enumeration problems, generating functions
05A16, %    Asymptotic enumeration
05C12, %    Distance in graphs
05C30, %    Enumeration in graph theory
60C05, %    Combinatorial probability
60D05, %    Geometric probability and stochastic geometry
60K35. %    Interacting random processes; statistical mechanics type models; percolation theory
\end{abstract}

\bigskip

%%%----------------------------------------------------------------------------------------------------------

\section{Introduction}

Percolation on random planar maps has been studied intensively since the pioneering work of Angel \cite{AngelPerco}. The main feature of random planar maps making this study so fruitful is the spatial Markov property. It can be used with two different approaches. The first approach is to perform an exploration process of percolation interfaces with the so-called peeling process. This is the approach developed by Angel \cite{AngelPerco} to prove that the threshold for Bernoulli site percolation on the Uniform Infinite Planar Triangulation (UIPT, the limit in law of large uniform random triangulations for the local topology \cite{AngelSchramm}) is $1/2$. This approach has been later used by several authors to study other models of percolation on maps, see for example \cite{AngelCurien,BC21,GMSS,MN,Richier} and the references therein. The second approach is more global and consists on decomposing the map into the cluster of the root vertex and pieces filling the faces of this cluster. Such a decomposition is often called the Gasket decomposition, see for instance the works of Borot, Bouttier, Duplantier and Guitter \cite{BBD,BBGa,BBGc}. This second approach has been used very recently to study percolation on random finite triangulations by Bernardi, Curien and Miermont \cite{BCM}, following the previous work by Curien and Kortchemski \cite{CuKo}; and on other natural models of random finite planar maps by Curien and Richier\cite{CR}.

\medskip

This work builds on the article by Bernardi-Curien-Miermont \cite{BCM} to study site percolation on the UIPT. Our first main result is an explicit formula for the probability that percolation from the root occurs.

\begin{theo} \label{th:offcrit}
Let $\mathbb P_\infty^p$ denote the law of the UIPT, with vertices colored black with probability $p$ and white with probability $1-p$, and conditioned on the event where the root edge has both end vertices colored black. Let $\mathfrak C$ denote the site percolation cluster of the root vertex under $\mathbb P_\infty^p$. Then, for every $p \in [0,1]$, we have
\begin{align*}
\mathbb P_\infty^{p} (|\mathfrak C| = \infty) 
& = 
2 \,
\frac{\sqrt{2p -1} \,
\left(\sqrt 3 - 
%\left( 2\cos^2 \left(\frac{\mathrm{arccos}(\sqrt p)}{3} \right)-1 \right)^3 \right)
\cos^3 \left(\frac{2}{3}\mathrm{arccos}(\sqrt p) \right) \right)
\,  
%\left( 2\cos^2 \left(\frac{\mathrm{arccos}(\sqrt p)}{3} \right)-1 \right)^{3/2}
\left(  \cos \left(\frac{2}{3}\mathrm{arccos}(\sqrt p) \right) \right)^{3/2}
+ 
p(2p-1)}{p \, (2 \sqrt 3 - 3 (1-p))} \,
\mathbf 1_{p \geq 1/2}.
\end{align*}
In particular, the critical exponent is $\beta = 1/2$: as $p \to 1/2^+$ one has
\begin{align*}
\mathbb P_\infty^{p} (|\mathfrak C| = \infty) 
& = 3^{1/4} \, \frac{15}{26} \, \left(1 +\frac{4\sqrt{3}}{3} \right) \sqrt{p - \frac{1}{2}} + \mathcal O \left( p - \frac{1}{2} \right).
\end{align*}
\end{theo}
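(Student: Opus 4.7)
The plan is to use the gasket decomposition from \cite{BCM}, which encodes a colored triangulation as a planar map with boundaries (the gasket, built from $\mathfrak C$ plus the boundary cycles of each complementary face) together with, for each internal face of the gasket of perimeter $k$, a triangulation of the $k$-gon whose internal vertices are all white. Summing over the white fillings transforms the model into a Boltzmann-weighted planar map with explicit face weights $q_k(p)$, computable from the partition functions of monochromatic white triangulations with boundary (these are classical and enter as algebraic functions of $p$). Under $\mathbb P_\infty^p$, the UIPT has one end, so the root cluster is either infinite or finite with the end of the UIPT lying in a unique ``outer'' face of the gasket; the event $\{|\mathfrak C|<\infty\}$ then decomposes as a sum over finite gaskets weighted by the product of face weights times a distinguished weight for the outer face carrying the UIPT end.

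First I would set up the finite-volume version of this picture, working with Boltzmann triangulations of vertex-weight $t$, where the law of the gasket is a standard Boltzmann planar map with face weights $q_k(p)$ and the generating function for $|\mathfrak C|$ is explicit. Sending $t \to t_c$ and using the local convergence to the UIPT transfers the identity to $\mathbb P_\infty^p$, and produces a closed expression for $\mathbb P_\infty^p(|\mathfrak C|<\infty)$ in terms of the gasket partition function and its ``pointed'' variant. This is the step where I would lean most heavily on the machinery developed in \cite{BCM}, since the gasket-face weights and the enumeration of white-boundary triangulations are treated there.

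Then I would extract the explicit closed form. The relevant partition functions are governed by a cubic polynomial equation in $p$; its Cardano solution in the casus irreducibilis range (which covers $p \in [0,1]$) naturally produces $\cos\bigl(\tfrac{2}{3}\arccos(\sqrt p)\bigr)$, explaining the form of the answer. Substituting the root of this cubic into the gasket generating function and simplifying yields the displayed identity; the indicator $\mathbf 1_{p\geq 1/2}$ appears because below criticality the partition function reduces identically and produces $0$. For the asymptotic expansion, I would Taylor-expand $\arccos(\sqrt p)$ around $p=1/2$, from which $\cos\bigl(\tfrac{2}{3}\arccos(\sqrt p)\bigr)$ and its power $3/2$ expand in integer powers of $p-\tfrac 1 2$, while the factor $\sqrt{2p-1}$ in the numerator contributes the leading $\sqrt{p-1/2}$; collecting coefficients gives $\beta = 1/2$ and the explicit constant.

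The main obstacle I expect is the reduction step, i.e.\ justifying rigorously that under $\mathbb P_\infty^p$ the law of the (possibly infinite) gasket can be described as a Boltzmann planar map with a single marked outer face absorbing the UIPT end, and that the resulting series identity is exact rather than only formal. Once this reduction is in place, the remainder is algebra: solving the cubic, substituting, and expanding a scalar function of $p$ near $1/2$.
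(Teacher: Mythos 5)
Your high-level plan — gasket decomposition, Boltzmann face weights $q_k$, transfer from finite Boltzmann triangulations to the UIPT by local convergence, Cardano/casus-irreducibilis as the source of the $\cos\bigl(\tfrac{2}{3}\arccos\sqrt{p}\bigr)$ — is indeed the right skeleton and tracks the paper's strategy in outline. However, the proposal leaves the two genuinely hard steps unresolved, and in one place mislocates where the computation actually happens.

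\textbf{The distinguished face and the numbers $\delta_k$.} You correctly intuit that for a finite cluster $\mathfrak m$ in the UIPT, one gasket face is special (it carries the infinite part), so the probability is a sum over choices of the distinguished face. But you never say what the distinguished weight actually is, and you flag the reduction step as an obstacle without resolving it. In the paper this is handled by Proposition~\ref{prop:probaCm}: one takes the limit as $n\to\infty$ of $[t^{3n}]\prod_f q_{\deg f}(p,t)\big/[t^{3n}]\mathcal Z(p,t)$ and observes (Lemma~\ref{lem:serDelta}) that each coefficient ratio $[t^{3n}]\,\sqrt{pt^3}^{-k}q_k(p,t)\big/[t^{3n}]\mathcal Z(p,t)$ converges to an explicit number $\delta_k(p)$, so in the limit the distinguished face carries $\delta_{\deg f}$ instead of $q_{\deg f}$. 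Establishing this requires proving a common $n^{-5/2}t_c^{-3n}$ asymptotic for $[t^{3n}]q_k$ and $[t^{3n}]\mathcal Z$, which in turn rests on the rational parametrizations of Lemmas~\ref{lemm:parU} and~\ref{lemm:parT}, the unique dominant singularity at $t_c^3$, and the Puiseux expansion of algebraic series. None of this is ``lean on [BCM]'': [BCM] works in finite volume and does not give this singular analysis.

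\textbf{Summing over finite gaskets.} After the reduction, you write that ``substituting the root of this cubic into the gasket generating function and simplifying yields the identity''; this step as stated does not work. What needs to be summed is $\sum_{\mathfrak m}\sum_{f} (pt_c^3)^{\deg f/2}\delta_{\deg f}(p)\prod_{f'\neq f} q_{\deg f'}(p,t_c)$, which is \emph{not} the gasket partition function: it is (after opening the root and re-rooting the distinguished face) a cylinder generating function of Boltzmann maps. The paper resolves this with the universal one-cut form of $W^{\bullet}$ and $W^{\mathrm{cyl}}$, recasts the sum as a Hadamard product, and writes it as the contour integral of Proposition~\ref{prop:integralProbab}. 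Without this step there is no quantity into which to ``substitute a cubic root.''

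\textbf{Location of the cubic.} Finally, the ``cubic polynomial in $p$'' is a misattribution. The cubic that produces the trigonometric expression is a polynomial in the parametrization variable $V$, $9V^3-9\sqrt 3\,V^2+4p\sqrt 3$, arising from the stationary points of $\hat y(p,U(t_c^3),\cdot)$; its roots feed into the explicit change of variables that evaluates the integral of Proposition~\ref{prop:integralProbab}. So even after the reduction, the remainder is not ``just algebra'' in the sense you describe: one must carry out the integral computation with the change of variables $y=\hat y\bigl(1-p,U(t_c^3),2\sqrt 3/3-V\bigr)$, exploit the symmetry $\hat y(p,U,V)=\hat y(1-p,U,2\sqrt 3/3-V)/(\hat y(1-p,U,2\sqrt 3/3-V)-1)$, and then simplify. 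The Taylor expansion near $p=1/2$ is, by contrast, routine once the closed form is in hand, and your description of that part is fine.
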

\begin{figure}[ht!]
\centering
\includegraphics[width=0.7\linewidth]{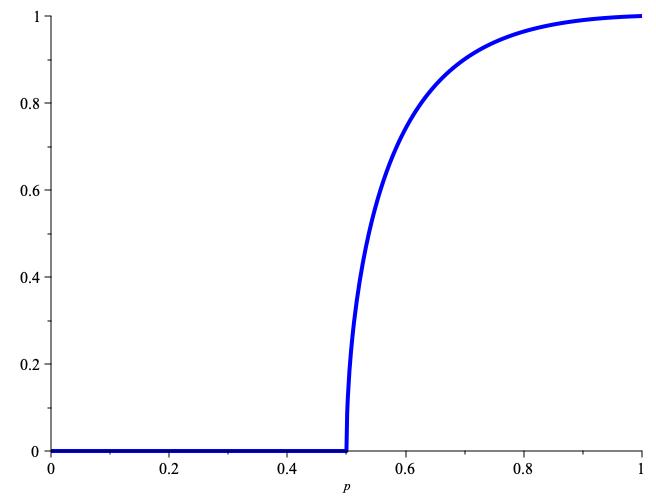}
\caption{\label{fig:plot}Plot of the probability $\mathbb P_\infty^p  (|\mathfrak C| = \infty)$ for $p\in [0,1]$.}
\end{figure}

\bigskip

To the best of our knowledge, this is the first formula of this type and the first calculation of the critical exponent $\beta$ for percolation on the UIPT. A similar formula was obtained for percolation on the Uniform Infinite Half-Plane Triangulation (UIHPT) by Angel and Curien \cite{AngelCurien} using the peeling process. They also calculate explicitly the probability that percolation from the root occurs and then obtain $\beta = 1$ in the UIHPT setting. The relation between these two exponents remains quite mysterious, and we do not know of any strategy to obtain the exponent $\beta$ without first computing explicitly the probability of percolation.

\bigskip

Our second main result gathers estimates for the volume and perimeter of critical percolation clusters.
\begin{theo} \label{th:volume}
With the notation of Theorem \ref{th:offcrit} we have for an explicit constant $\kappa >0$:
\begin{align*}
\mathbb P_\infty^{1/2} \left( |V(\mathfrak C)| \geq  n \right)  \underset{n \to \infty}{\sim} \kappa \, n^{-1/7}.
\end{align*}
Furthermore, if $\partial \mathfrak C$ denotes the root face of $\mathfrak C$, then there is an explicit constant $\kappa' >0$ such that
\begin{align*}
\mathbb P_\infty^{1/2} \left( |V(\partial \mathfrak C)| = n \right)  \underset{n \to \infty}{\sim} \kappa' \, n^{-4/3}.
\end{align*}
\end{theo}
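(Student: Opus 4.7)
The plan is to combine the gasket decomposition of percolation clusters (following Bernardi–Curien–Miermont) with generating function calculus and singularity analysis. A first step is to describe the law of $\mathfrak C$ under $\mathbb P_\infty^{1/2}$ explicitly: by the spatial Markov property of the UIPT, conditionally on the cluster $\mathfrak C$, the complement consists of independent Boltzmann triangulations with a simple boundary (one per internal face of $\mathfrak C$), each weighted by its disk partition function at white weight $1/2$. This realizes $\mathfrak C$ as a Boltzmann-type random map with explicit face-degree weights, corrected by a local-limit prefactor coming from the conditioning to sit inside the UIPT.

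For the perimeter asymptotic, the gasket decomposition expresses $\mathbb P_\infty^{1/2}(|V(\partial \mathfrak C)| = n)$ as a sum over all valid clusters with root-face perimeter $n$ of a product of Boltzmann weights. I expect this sum to collapse into a closed-form single-variable generating function, to which I would apply the Flajolet–Odlyzko transfer theorems, using the classical $\sim n^{-5/2}$ asymptotic for the disk partition function of Boltzmann triangulations with simple boundary. A singular expansion of the form $(1-z)^{1/3}$ at the dominant singularity then translates mechanically into the $n^{-4/3}$ tail.

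For the volume asymptotic, I would start from the integral representation of the generating function
\begin{equation*}
F(x) = \sum_{n \geq 0} \mathbb P_\infty^{1/2}(|V(\mathfrak C)| = n)\, x^n
\end{equation*}
established earlier in the paper. Since each vertex of $\mathfrak C$ lies either in the gasket or on the boundary of some filling region, $F(x)$ admits a compact expression in terms of the bivariate generating function of Boltzmann triangulations with boundary, counted by perimeter and internal vertices. The key step is the singularity analysis of this integral near its dominant singularity $x_c$: I expect $F(x_c) - F(x)$ to admit an expansion of the form $c\,(x_c - x)^{1/7}$, with the non-generic exponent $1/7$ emerging from the combination of the $(1-y)^{1/3}$-type singularity on the perimeter side (as above) with the vertex partition function of the filling Boltzmann triangulations. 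A Flajolet–Odlyzko transfer then yields $[x^n] F(x) \sim c'\, n^{-8/7}$ and, summing the tail, the desired $n^{-1/7}$.

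The main obstacle will be this last singularity analysis: the exponent $1/7$ is highly sensitive to the precise order of vanishing of the integrand at $x_c$, and one has to justify analytic continuation of the integral to a suitable $\Delta$-domain, identify which singular contribution dominates, and rule out cancellations that would modify the exponent. Once the singular expansion is in hand, the transfer theorems apply routinely. By contrast, the perimeter exponent $4/3$ comes out fairly directly from an explicit sum and known Boltzmann boundary asymptotics, and is expected to be the easier of the two estimates.
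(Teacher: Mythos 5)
Your overall plan is aligned with the paper's strategy: use the gasket decomposition to realize $\mathfrak C$ as a Boltzmann-type random map, derive an explicit expression for the law of the cluster under $\mathbb P_\infty^{1/2}$ (Proposition \ref{prop:probaCm}), obtain a closed-form generating function, and extract the exponents by singularity analysis. For the perimeter part, the paper indeed writes $\mathbb P_\infty^{1/2}(|V(\partial\mathfrak C)|=k)$ explicitly as a sum of two products of sequences ($\delta_k$, $t_c^kT_k$, $q_k$, $\theta_k$) whose individual $k$-asymptotics are of order $k^{-5/3}$ and $k^{1/3}$, combining to $k^{-4/3}$; so your picture of a $(1-z)^{1/3}$-type singularity translating to $n^{-4/3}$ is essentially right, though the paper's route is via coefficient asymptotics of rationally parametrized series rather than a single collapsed generating function. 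Note also that one fills the faces of $\mathfrak C$ with triangulations having an \emph{arbitrary} boundary together with a necklace, not with simple-boundary disks.

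The volume part has a genuine gap. You propose establishing a singular expansion $F(x_c)-F(x)\sim c(x_c-x)^{1/7}$, then applying Flajolet--Odlyzko transfer to get $[x^n]F(x)\sim c'n^{-8/7}$ and summing the tail. This last step requires analytic continuation of $F$ to a $\Delta$-domain around the dominant singularity at $1$ and control of any other singularities of modulus $1$. The paper explicitly points out that this is \emph{not} available here: $\mathbb E_\infty^{1/2}[g^{|V(\mathfrak C)|}]$ might have further singularities of modulus $1$, so a transfer theorem cannot be applied to the probabilities $\mathbb P(|V(\mathfrak C)|=n)$ directly. Instead, one must pass to the tail generating function $\sum_n \mathbb P(|V(\mathfrak C)|\geq n)g^n = \bigl(1-g\,\mathbb E[g^{|V(\mathfrak C)|}]\bigr)/(1-g)$, show it behaves like $\kappa_1(1-g)^{-6/7}$ as $g\to 1^-$, and then apply a \emph{Tauberian} theorem, which needs only monotonicity of the coefficients (automatic for tail probabilities) rather than $\Delta$-domain analyticity. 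Your plan as written would not close without this modification.

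A second, more technical point: the mechanism that produces the exponent $1/7$ is not quite the one you sketch. In the paper the exponent arises from the $7/6$-singularity of the Bouttier--Di Francesco--Guitter functions $f^\bullet,f^\diamond$ at the critical point $(z^+,z^\diamond)$ (Lemma \ref{lem:BDFGexpansion}), which after inverting the admissibility system~\eqref{eq:systemg} gives $z^+-z^+(g)\sim(1-g)^{6/7}$, and then a hypergeometric singular expansion contributes a further $1/6$ power, yielding $(1-g)^{1/7}$. Your heuristic ("combining the $(1-y)^{1/3}$ perimeter singularity with the vertex partition function") gestures toward the right ingredients -- the $(1-y/2)^{2/3}$ singularity of $T$ does feed into the $7/6$ exponent -- but you would need the BDFG machinery and the system of criticality equations to make the exponent come out; without them there is no reason the exponent should be exactly $1/7$ rather than some other non-generic value.
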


The perimeter exponent $4/3$ was established by Curien and Kortchemski \cite{CuKo} using the gasket decomposition but with a different approach than the present work.
The exponent $1/7$ was conjectured in \cite{GMSS} using heuristics for the peeling process, and the present work is the first time it is established rigorously. Previous works that computed volume exponents for critical percolation models on infinite random planar maps did so for cluster hulls (part of the maps separated from infinity by a percolation interface). The reason being that all these works use variations around the peeling process, which is particularly well suited to study percolation interfaces -- and therefore hulls -- but not so useful to study the geometry of the clusters themselves. For instance, let $\mathfrak H$ denote the hull of the root cluster $\mathfrak C$ in the UIPT; i.e. $\mathfrak H$ is the complement of the only infinite connected component of $\mathbf T_\infty \setminus \mathfrak C$, where $\mathbf T_\infty$ is the UIPT with a critical site percolation configuration. Gorny, Maurel-Segala and Singh \cite{GMSS} proved that
\[
\mathbb P \left(  |V(\mathfrak H)| \geq  n \right)  \asymp n^{-1/8} \quad \text{and} \quad \mathbb P \left(  |V(\partial \mathfrak H)| \geq  n \right)  \asymp n^{-1/6},
\]
where $u_n \asymp v_n$ means that $u_n/v_n$ is bounded.

\paragraph{Main ingredients and organization of the paper.} For every site percolated finite rooted planar triangulation $\mathfrak t$, we denote by $\mathrm v_\circ (\mathfrak t)$ and  $\mathrm v_\bullet (\mathfrak t)$ its number of white (resp. black) vertices, and by $\mathrm e(\mathfrak t)$ its number of edges. For $p \in (0,1)$ and for $t>0$ small enough, we can consider a random finite percolated triangulation $\mathbf{t}$ whose law gives a probability proportional to $t^{\mathrm e(\mathfrak t)} \, p^{\mathrm v_\bullet(\mathfrak t)} (1-p)^{\mathrm v_\circ (\mathfrak t)}$ to every finite triangulation $\mathfrak t$. Let us denote by $\mathcal Z$ the partition function of this model and by $\mathbb P^p$ the corresponding probability:
\[
\mathcal Z (p,t) := \sum_{\mathfrak t} t^{\mathrm e(\mathfrak t)} \, p^{\mathrm v_\bullet(\mathfrak t)} (1-p)^{\mathrm v_\circ (\mathfrak t)} < +\infty, \quad \text{and} \quad \mathbb P^p \left( \mathbf t= \mathfrak t\right) = \frac{t^{\mathrm e(\mathfrak t)} \, p^{\mathrm v_\bullet(\mathfrak t)} (1-p)^{\mathrm v_\circ (\mathfrak t)}}{\mathcal Z (p,t)}.
\]
The partition function $\mathcal Z$ and its generalizations to triangulations with a boundary (with additional parameters counting respectively the number of boundary vertices and boundary edges) are studied in Section \ref{sec:genser}. In particular, we establish a rational parametrization for the generalized partition function. This parametrization significantly simplifies the study of its analytic properties started in \cite{BCM}.

\bigskip

The gasket decomposition resides in the following statement. There exists an explicit sequence of positive numbers $(q_k(p,t))_{k \geq 1}$ such that, for every finite non atomic map $\mathfrak m$, the probability that the root cluster $\mathfrak C (\mathbf t)$ of the random triangulation $\mathbf t$ is equal to $\mathfrak m$ is given by
\[
\mathbb P^p \left( \mathfrak C (\mathbf t) = \mathfrak m \right) = \frac{\prod_{f\in \mathrm{Faces}(\mathfrak m)} q_{\mathrm{deg}(f)} (p,t)}{\mathcal Z (p,t)}.
\]
In this sense, the root cluster is a Boltzmann random planar map associated to the weight sequence $(q_k(p,t))_{k \geq 1}$. The properties of such random maps depend on the asymptotic behavior in $k$ of the weight sequence, which can be inferred from the generating function of the weights. In our case, the weight generating function is closely related to the generalized partition function of percolated triangulations counted by edges, boundary edges, and boundary vertices. A crucial consequence of this relation is that it allows to calculate explicitly the so-called pointed disk generating function of the root cluster in terms of the singularities of the generalized partition function of percolated triangulations in Equations \eqref{eq:expWbullet} and \eqref{eq:cpm}. See Section \ref{sec:BPM} for details.

\bigskip

To study the origin cluster of the UIPT, we can condition the random triangulation $\mathbf t$ to have $3n$ edges. By continuity for the local topology, this gives
\begin{equation} \label{eq:limlaw}
\mathbb P^p \left( \mathfrak C (\mathbf t) = \mathfrak m \middle| \mathrm e(\mathbf t) = 3n \right) = \frac{[t^{3n}]\prod_{f\in \mathrm{Faces}(\mathfrak m)} q_{\mathrm{deg}(f)} (p,t)}{[t^{3n}] \mathcal Z (p,t)} \underset{n \to \infty}{\rightarrow} \mathbb P_\infty^{p} (\mathfrak C = \mathfrak m).
\end{equation}
With a careful study of the dependency in $t$ of the weight sequence $(q_k(p,t))_{k \geq 1}$ performed in Section \ref{sec:teclem}, we are able to compute the above limit, giving the law of the root cluster in the UIPT on the event where it is finite.

We are then able to establish an integral formula for the sum of the limiting probability over every finite map, which is the probability that the root cluster of the UIPT is finite. See Proposition~\ref{prop:integralProbab}. In particular, this calculation uses the explicit universal form of the  pointed disk generating functions and cylinder generating functions of Boltzmann maps. With the help of our rational parametrizations, we can then calculate explicitely our integral formula for the probability that the root cluster is finite and establish Theorem \ref{th:offcrit}. This is performed in Section \ref{sec:main}.

The formula we obtain for the limit \eqref{eq:limlaw} is quite easy to sum over maps with the same perimeter. As a consequence we are also able to calculate explicitely the law of the perimeter of the root cluster and obtain the second statement of Theorem \ref{th:volume} on the perimeter of the root cluster. This is done at the beginning of Section \ref{sec:vol}.

\bigskip

Finally, to compute the tail probability of the number of vertices in the root cluster at criticality ($p=1/2$), we establish an integral formula for the generating function $\mathbb E_\infty^{p} \left[ g^{|V(\mathfrak C )|} \right]$ in Section \ref{sec:vol}. See in particular identity \eqref{eq:Volgint}, which also originates from our explicit formula for the limit \eqref{eq:limlaw}. The expression we get involves two quantities. The first is the generating series derived from the asymptotics of the coefficients in $t$ of the weights $(q_k(p,t))_{k \geq 1}$ studied in Section \ref{sec:teclem} for which we have an explicit parametric expression. The second quantity involves the pointed disk generating functions of Boltzmann maps with modified weight sequence $(g^{(k-2)/2} q_k(p,t))_{k \geq 1}$. We can analyze the behavior as $g \to 1^-$ of these modified pointed disk generating functions using the bivariate generating functions associated to the Bouttier-Di Francesco-Guitter bijection \cite{BDFG} presented in Section \ref{sec:criteq} and their singularities obtained in Section \ref{sec:gfsing}. We put all that together in Section \ref{sec:vol} to study the singular behavior as $g \to 1^-$ of $\mathbb E_\infty^{1/2} \left[ g^{|V(\mathfrak C)|} \right]$ and prove Theorem \ref{th:volume}.

\paragraph{On the robustness of our approach.} We believe that it should be possible to adapt the strategy of this paper to other models of random planar maps and other statistical models. Indeed, what is needed first is the gasket decomposition; which exists for example for percolation on other models of maps \cite{CR}, or for the $\mathcal O (n)$ model on maps \cite{BBD,BBGa}. We then need information on the generalized generating series of maps with a boundary and their singularities. In the present work we have explicit parametric expressions to simplify calculations, but we really only need to identify the nature of the singularities for the critical exponents. Any model for which such information is available should fall into the scope of our method.

In another article \cite{AM}, we derive several critical exponents for the sign clusters in finite and infinite planar triangulations coupled with an Ising model. In particular, we establish in \cite{AM} counterparts for the main results of the present work. Theorem \ref{th:offcrit} and its counterpart for Ising model are unrelated, but Theorem \ref{th:volume} is a particular case of its Ising version at infinite temperature.

\paragraph{Links with other critical exponents for percolation on the UIPT.} The volume exponent $1/7$ of Theorem \ref{th:volume} suggests that the scaling limit of a large critical percolation cluster in the UIPT should be of quantum dimension $7/8$. There is also very strong evidence (see for instance the works of Bernardi-Holden-Sun \cite{BeHoSun} and Holden-Sun \cite{BeHoSun}) that in some sense, this scaling limit is a CLE$_6$ on an independent pure Liouville Quantum Gravity surface. The quantum dimension $7/8$ of the scaling limit of the root cluster and the dimension $91/48$ of the gasket of a CLE$_6$ agree with the KPZ \cite{KPZ} relation:
\[
1 - \frac{1}{2} \frac{91}{48} = \frac{2}{3} \left(1 - \frac{7}{8} \right)^2 + \frac{1}{3} \left(1 - \frac{7}{8} \right).
\]
In a similar fashion, the perimeter exponent $4/3$ of Theorem \ref{th:volume} agrees with the KPZ relation and the dimension $\frac{7}{4}$ of a SLE$_6$ curve \cite{SLEdim}

The value of the critical exponent $\beta$ agrees heuristically with known quantities of the UIPT and Kesten's scaling relations. Indeed, Bernardi-Holden-Sun \cite{BeHoSun} and Holden-Sun \cite{HS} established that the the number of percolation pivotal points in a random triangulation of size $n$ is of order $n^{1/4}$. In this sense, the quantum dimension of the set of pivotal points of  critical percolation on the UIPT should be $1$ (the map itself having dimension $4$). This dimension can also be predicted with the KPZ relation and the dimension $3/4$ of critical percolation pivotal points in Euclidean geometry \cite{GarbanPeteSchramm}. On the other hand, the one arm exponent $\alpha_1$ of critical percolation on the UIPT should be $1/2$ from the quantum dimension $7/8$ of the the clusters. Kesten's scaling relation then states that the dimension of pivotal points should be $\alpha_1/\beta$, giving $1$ with the exponent of Theorem \ref{th:offcrit}.

\paragraph{Acknowledgments.} The author thanks Pierre Nolin for stimulating discussions on critical exponents. This work is partially supported by the ANR grant ProGraM (Projet-ANR-19-CE40-0025) and the Labex MME-DII (ANR11-LBX-0023-01).

\tableofcontents

\section{Generating series} \label{sec:genser}

Let $T$ be the generating series of rooted triangulations with a (not necessarily simple) boundary counted by edges (variable $t$), boundary length (parameter $y$), and boundary vertices (parameter $p$). That is, we define:
\[
T(p,t,y) = \sum_{k\geq 1} \sum_{\mathfrak t \in \mathcal T_k} t^{e(\mathfrak t)} p^{\mathrm v_{\mathrm {out}}(\mathfrak t)} y^k = \sum_{k\geq 1} T_k(p,t) \, y^k,
\]
where $\mathcal T_k$ is the set of all rooted triangulations with a boundary face of degree $k$, and where ${e(\mathfrak t)}$ and  $\mathrm v_{\mathrm {out}}(\mathfrak t)$ denote respectively the total number of edges and number of boundary vertices of the triangulation with a boundary $\mathfrak t$.

From \cite[Lemma 3.1]{BCM}, we have the following equation fo $T(p,t,y)$:
\begin{equation} \label{eq:Ty}
T(p,t,y) = p + y^2 t T^2(p,t,y) + (p-1) t \frac{(T(p,t,y)-p)^2}{p y T(p,t,y)} + \frac{t}{py} \left( T(p,t,y)-p-y \,T_1 (p,t) \right).
\end{equation}
Using the quadratic method, the authors of \cite{BCM} establish the following algebraic equation for $T_1 \equiv T_1(p,t)$ that will be our starting point:
\[
64 \, T_1^3 \, t^5-27 \,p^3 \, t^5-96 \, T_1^2 \, p \, t^4+30\,T_1\, p^2\, t^3+p^3\,t^2+T_1^2\,p\,t-T_1 \, p^2 =0.
\]
Up to a multiplicative constant $p$, the series $T_1$ is simply the generating series of triangulations of the $1$-gon counted by edges and admits a proper rational parametrization:
\begin{lemm} \label{lemm:parU}
Let $U$ be the unique power series in $t^3$ having constant term $0$ and satisfying
\[
t^3 = \hat w(U) := \frac{1}{2} U (1-U) (1-2U).
\]
The series $t\,T_1 (p,t)$  seen as a series in $t^3$ admits the following proper rational parametrization in $U$:
\[
t \, T_1(p,t) = \hat T_1 (p,U) := p \, U \, \frac{1-3U}{1-2U}.
\]
Furthermore, the series $U(t^3)$ has a unique dominant singularity at $t^3 = (t_c)^3 := \frac{\sqrt{3}}{36}$ with the following singular behavior:
\begin{equation} \label{eq:singU}
\begin{split}
U(t^3) = \frac{3-\sqrt{3}}{6} - \frac{\sqrt{2}}{6} \left(1- \left(\frac{t}{t_c}\right)^3 \right)^{1/2} & + \frac{\sqrt{3}}{54} \left(1- \left(\frac{t}{t_c}\right)^3 \right)\\
& - \frac{5\sqrt{2}}{648} \left(1- \left(\frac{t}{t_c}\right)^3 \right)^{3/2} + \mathcal O \left(1- \left(\frac{t}{t_c}\right)^3 \right)^{2}.
\end{split}
\end{equation}
\end{lemm}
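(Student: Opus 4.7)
The plan is to prove the three claims in sequence: well-definedness of $U$, the parametrization identity for $t T_1$, and the singular expansion of $U$ at $t_c$.

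Well-definedness of $U$ follows from the formal Lagrange inversion theorem. Since $\hat w(U) = \frac{1}{2}(U - 3U^2 + 2U^3)$ has $\hat w(0) = 0$ and $\hat w'(0) = 1/2 \neq 0$, there is a unique formal power series in $t^3$ with no constant term such that $t^3 = \hat w(U)$, and its coefficients can be computed iteratively from the expansion of $\hat w$.

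For the parametrization identity, I would substitute $t T_1 = p U(1-3U)/(1-2U)$ together with $t^3 = \frac{1}{2} U (1-U)(1-2U)$ into the cubic equation
$$64 T_1^3 t^5 - 27 p^3 t^5 - 96 T_1^2 p t^4 + 30 T_1 p^2 t^3 + p^3 t^2 + T_1^2 p t - T_1 p^2 = 0,$$
clear the denominators by multiplying through by $t(1-2U)^3$, and verify that the resulting expression, after dividing out the factor $p^3$ that every surviving monomial carries (the $p$-dependence is homogeneous of degree three), vanishes identically as a polynomial in $U$. Since the original equation is a cubic in $T_1$ and has three branches, the algebraic identity alone is not sufficient; I would fix the correct branch by checking that the initial terms of $t\hat T_1(p,U(t^3))$ agree with those of $t T_1(p,t)$, both being determined from the algebraic equation and vanishing at $t=0$. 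The verification is a tedious but entirely mechanical calculation, best delegated to a computer algebra system.

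For the singular analysis at $t_c$, the dominant singularity of $U(t^3)$ is located at the smallest positive critical value of $\hat w$. Solving $\hat w'(U) = \frac{1}{2}(1 - 6U + 6U^2) = 0$ yields the smaller root $U_c = (3 - \sqrt{3})/6$, and a direct computation gives $\hat w(U_c) = \sqrt{3}/36 = t_c^3$, $\hat w''(U_c) = -\sqrt{3}$, and $\hat w'''(U_c) = 6$ (higher derivatives vanishing since $\hat w$ is cubic). Since $\hat w$ is strictly increasing on $[0, U_c]$, the series $U(t^3)$ is analytic on $[0, t_c^3)$ with a unique dominant singularity at $t_c^3$. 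Writing $\xi = 1 - (t/t_c)^3$ and $\Delta = U - U_c$, the Taylor expansion of $\hat w$ around $U_c$ yields the exact identity
$$-\frac{\sqrt{3}}{36}\, \xi = -\frac{\sqrt{3}}{2}\, \Delta^2 + \Delta^3,$$
which I would invert as a Puiseux series in $\sqrt{\xi}$, selecting the branch with $\Delta \to 0^-$ as $\xi \to 0^+$ so that $U \to U_c$ from below. The leading term is $\Delta = -(\sqrt{2}/6)\sqrt{\xi}$, and iterating the inversion order by order produces the successive coefficients $(\sqrt{3}/54)\xi$ and $-(5\sqrt{2}/648)\xi^{3/2}$ appearing in \eqref{eq:singU}.

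The main obstacle is the algebraic verification in the second step: although conceptually trivial, substituting a rational function in $U$ into a cubic in $T_1$ produces, after clearing $(1-2U)^3$ and using $t^3 = \hat w(U)$, a polynomial identity of degree roughly $9$ in $U$ that is cumbersome to check by hand. The remaining steps (Lagrange inversion and Newton--Puiseux expansion at a simple critical point of a cubic) are standard and cause no difficulty.
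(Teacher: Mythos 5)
Your proposal follows the same overall plan as the paper's (quite terse) proof: well-definedness of $U$ from the Lagrangean form of its defining equation, verification of the parametrization of $tT_1$ by substitution into the cubic, and a Newton--Puiseux expansion of $U$ at the critical point $U_c$, with the mechanical algebra delegated to a computer algebra system (the paper refers to its Maple companion file for exactly these computations). A minor remark on your branch-fixing step: the paper argues slightly more directly that the cubic equation satisfied by $tT_1$, viewed as a polynomial equation in $S=tT_1$ and $w=t^3$, has a \emph{unique} formal power series solution with $S(0)=0$ (the partial derivative with respect to $S$ at the origin is $-p^2\ne 0$), so once one checks that $\hat T_1(p,U(w))$ satisfies this equation and has constant term $0$, the identity follows without comparing further initial coefficients. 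One small gap in your singularity analysis: strict monotonicity of $\hat w$ on $[0,U_c]$ gives analyticity of $U$ on $[0,t_c^3)$, but uniqueness of the dominant singularity on the whole circle $|t^3|=t_c^3$ needs an additional argument, typically positivity of the coefficients of $U$, which holds here because Lagrange inversion expresses $[w^n]U$ in terms of coefficients of $\bigl(U/\hat w(U)\bigr)^n = \bigl(2/((1-U)(1-2U))\bigr)^n$, a series with nonnegative coefficients. The paper elides this point as well, classifying it as classical.
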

\begin{proof}
This result could be qualified as folklore since the series $T_1$ is just the generating series of triangulations of the $1$-gon multiplied by $p$. The rational parametrization given is also the classical one. The fact that $U(t^3)$ is unique comes from the Lagrangean form of the equation that defines it. We can also see that the algebraic equation satisfied by $t \,T_1(t)$ has a unique solution that is power series with constant term $0$. By composition and unicity we can see that indeed $tT_1(t) = \hat T_1(p,U(t^3))$ as power series in $t$ since both verify the algebraic equation and have constant term $0$. The singular behavior of $U$ is also very classical and without difficulties. See the Maple companion file \cite{Maple} for details.
\end{proof}

Injecting the parametrization of $t^3$ and $t \, T_1$ by $U$ in the equation for $T$ then allows to establish a proper rational parametrization for $T$:
\begin{lemm} \label{lemm:parT}
Recall the definition of the power series $U \equiv U(t^3)$ from Lemma \ref{lemm:parU}. Let $V \equiv V(p,U,y)$ be the unique power series in $\mathbb Q[p,U] \llbracket y \rrbracket \subset \mathbb Q[p] \llbracket t^3,y \rrbracket$ with constant term in $y$ equal to $0$ satisfying:
\[
y= \hat y (p,U,V) := \frac{2 \, V \, (2 - 4\,U -V)}{4 \, p \, U \, (1-U) \, (1-2U) + 2 \, U \, (1-3U) \, V + 2 \, (1-3U) V^2 -V^3}.
\]
The series $T (p,t,ty)$ seen as a series in $t^3$ and $y$ is algebraic and admits the following proper rational parametrization in $U$ and $V$
\[
T(p,t,ty) = \hat T(p,U,V) := \frac{4 \, p \, U \, (1-U) \, (1-2U) + 2 \, U \, (1-3U) \, V + 2 \, (1-3U) V^2 -V^3}{4\, U \, (1-U) \, (1-2U)}.
\]

In addition, for any $p \in (0,1]$ and any fixed $t \in (0, t_c]$, the series $T(p,t,ty)$ seen as a series in $y$ has radius of convergence $y_+(p,t)>1$ where it is singular. Furthermore, the series $T(p,t,ty)$ can be analytically continued in the domain $\mathbb C \setminus \left((-\infty ,y_-(p,t)] \cup [y_+(p,t), +\infty) \right)$, where $y_-(p,t)$ is on the negative real line ($-\infty$ included) and is such that $y_-(p,t) < - y_+(p,t)$.
\end{lemm}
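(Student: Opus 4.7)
The plan is to establish the parametric identity as a formal power series statement first, and then carry out the analytic continuation and singularity analysis for real $(p,t)$ in a second step.

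\emph{Parametric identity.} Starting from the functional equation \eqref{eq:Ty}, I would replace $y$ by $ty$ and clear denominators so that the equation becomes polynomial in $T$, $y$, $t^3$, and $tT_1$. Substituting $t^3 = \hat w(U)$ and $tT_1(p,t) = \hat T_1(p,U)$ from Lemma~\ref{lemm:parU}, the equation becomes polynomial in $T$, $y$, $p$, and $U$. The candidate $(T,y) = (\hat T(p,U,V), \hat y(p,U,V))$ is then plugged into this polynomial equation and simplified to zero. Once written out, this is a polynomial identity in $\mathbb{Q}[p,U,V]$, best verified with computer algebra as in the Maple companion file \cite{Maple}.

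\emph{Uniqueness and existence of $V$.} One checks that $\hat y(p,U,0) = 0$ while $\partial_V \hat y(p,U,V)|_{V=0}$ is nonzero for $p \in (0,1]$ and $U \in (0, 1/2)$ (a direct differentiation gives the value $1/(pU(1-U))$). The formal implicit function theorem applied in $\mathbb{Q}[p,U]\llbracket V \rrbracket$ then produces a unique power series $V(p,U,y) \in \mathbb{Q}[p,U]\llbracket y \rrbracket$ with zero constant term solving $y = \hat y(p,U,V)$. Composing, $\hat T(p,U,V(p,U,y))$ is a power series in $t^3$ and $y$ satisfying the same algebraic equation as $T(p,t,ty)$ with the same constant term $p$ at $y=0$; since the equation is non-degenerate in $T$ at $(T,y) = (p,0)$, the two must coincide. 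This also proves the algebraicity of $T$ in $t^3$ and $y$.

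\emph{Analytic continuation and real singularities.} Fix $p \in (0,1]$ and $t \in (0,t_c]$, so that $U(t^3)$ takes a fixed value in the real interval $(0, (3-\sqrt 3)/6]$. The relation $y = \hat y(p,U,V)$ is a rational equation of degree $3$ in $V$; the map $V \mapsto y$ defines a $3$-sheeted cover of the $y$-plane and the physical branch is the one selected by $V(0) = 0$. Its singular values in $y$ are the critical values of this rational map, obtained by eliminating $V$ between $\hat y(p,U,V) = y$ and $\partial_V \hat y(p,U,V) = 0$. This yields finitely many candidate real singular values: the smallest positive one defines $y_+(p,t)$ and the largest negative one (or $-\infty$ if no negative critical value lies closer to $0$ than the branch escapes to infinity) defines $y_-(p,t)$. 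Outside the two real half-lines emanating from these points, the selected branch admits a single-valued analytic continuation, yielding the stated domain.

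\emph{Main obstacle.} The analytically delicate step is to prove the quantitative bounds $y_+(p,t) > 1$ and $y_-(p,t) < -y_+(p,t)$ uniformly in the range of parameters. The first bound reflects convergence of the triangulation generating function $T(p,t,t)$ at $y=1$, since the peeling argument of \cite{BCM} guarantees that subcritical triangulation enumerations converge. Both inequalities reduce, via the rational parametrization, to sign and magnitude comparisons between explicit polynomials in $p$ and $U$ on the compact parameter range $(p,U) \in (0,1] \times (0, (3-\sqrt 3)/6]$. Monitoring discriminants and sign variations on this compact region, as is done in the Maple companion file, gives the required strict inequalities.
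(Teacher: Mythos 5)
Your proof follows essentially the same plan as the paper: verify the parametric identity by substituting and checking a polynomial identity (with Maple), establish existence/uniqueness of $V$ from the Lagrangean form of the defining equation (your implicit-function-theorem phrasing is equivalent; your computation $\partial_V \hat y|_{V=0} = 1/(pU(1-U))$ is correct), identify $T(p,t,ty)$ with $\hat T(p,U,V)$ by uniqueness of the solution of the algebraic equation, and then do singularity analysis of $V \mapsto \hat y(p,U(t^3),V)$ by locating its poles and stationary points on the real line.

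The one place where your route genuinely diverges from the paper is the final inequality $y_-(p,t) < -y_+(p,t)$. You propose to establish it by ``sign and magnitude comparisons between explicit polynomials in $p$ and $U$,'' and the paper agrees this could be done but explicitly flags it as cumbersome. Instead, the paper sidesteps the computation entirely: once the slit-domain analyticity is in place, identity \eqref{eq:cpm} shows $y_\pm(p,t) = 1/(\sqrt{p t^3}\, c_\pm(p,t))$ where $c_\pm$ are the endpoints in the universal one-cut form \eqref{eq:expWbullet} of the pointed disk generating function, and the known bound $|c_-| < c_+$ for non-bipartite Boltzmann maps immediately gives $|y_-| > y_+$. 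That detour through Boltzmann-map universality is both cleaner and computation-free. A minor additional caveat in your write-up: invoking ``convergence of $T(p,t,t)$ at $y=1$'' on its own would only give $y_+ \geq 1$, not the strict inequality; you would still need the explicit check $y_+(p,t_c) > 1$ (the paper does exactly this, combined with monotonicity of $y_+$ in $|t|$).
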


\begin{proof}

All computations are available in the Maple companion file \cite{Maple}.

The fact that $V$ is uniquely defined as a power series comes from the Lagrangean form of the equation $V = y \times R (p,U,V)$ with $R$ a rational fraction such that $R (p,U,0) \neq 0$. This form also implies by inductive calculation of the coefficients in $y$ of $V$ that they are all rational in $p$ and $U$.
Similarly, the equation \eqref{eq:Ty} verified by $T(p,t,ty)$ takes the form
\[
\begin{split}
p (T(p,t,ty) - p) & (T(p,t,ty) - (1-p))\\
&= y \, T(p,t,ty) \, \left( tT_1(p,t) - p (T(p,t,ty) - p) - y^2 \, p \, t^3 \, T(p,t,ty)^2 \right).
\end{split}
\]
Here again, by inductive calculation of the coefficients, we can see that this last equation has a unique solution that is a power series in $t^3$ and $y$ with constant term in $y$ equal to $p$. Note that since the series $tT_1(p,t)$ is algebraic, this equation also ensures that $T(p,t,ty)$ is algebraic.
By composition, $\hat T \left(p, U(t^3), V(p,U(t^3),y) \right)$ is a power series in $t^3$ and $y$ with constant term in $y$ equal to $p$. We can verify that it satisfies the same algebraic equation as $T(p,t,ty)$, and therefore the two power series are identical.

\bigskip

Now, fix $t^3 \in (0,t_c]$. The function $V \mapsto \hat y (p,U(t^3),V)$ has poles and stationary points that we can locate. Let us start with the poles. The denominator of $\hat y (p,U(t^3),V)$ is a polynomial of degree three in $V$. It is positive with positive derivative at $V = 0$, and changes signs between $1-2U$ and $2(1-2U)$. Since the coefficient of $V^3$ is $-1$, this leaves two possibilites for the poles of $\hat y (p,U(t^3),V)$: there is always a pole between $1-2U$ and $2(1-2U)$, and either there is no additional real pole or there are two negative poles.

The stationary points of $\hat y (p,U(t^3),V)$ are the roots of the polynomial
$-2V^4 + (-16U + 8)V^3 - 4(3U - 1)(3U - 2)V^2 - 16Up(2U - 1)(U - 1)V - 16Up(U - 1)(2U - 1)^2$, where $U$ stands for $U(t^3)$. By computing the values of the polyomial at $0$, $1-2U$ and $2(1-2U)$, we can see that it has $4$ real roots $V_-(p,U) < 0 < V_+(p,U) \leq 1-2U \leq V_l(p,U) < 2(1-2U) < V_r(p,U)$ (the case of a double root at $1-2U$ only happens when $U=U_c$ and is treated separately in the Maple file). It is also easy to see that $\partial_V \hat y (p,U(t^3),0) >0$, and therefore $y_+(p,t) := \hat y (p, U(t^3), V_+(p,U(t^3))) >0$. We can define $y_-(p,t) := \hat y (p, U(t^3), V_-(p,U(t^3))) < 0$ when $\hat y (p,U(t^3),V)$ has no pole between  $V_-(p,U(t^3))$ and $0$, and $y_-(p,t) := -\infty$ when it has such a pole. By singular inversion, the inverse function $V(p, U(t_3),y)$ of $\hat y (p,U(t^3),V)$ is analytic in the domain $\mathbb C \setminus \left((-\infty ,y_-(p,t)] \cup [y_+(p,t), +\infty) \right)$ and singular at both points $y_\pm(p,t)$ when they are finite.

By composition $y \mapsto T(p,t,ty)$ is analytic in the same domain. Since it has nonnegative coefficients, we know that it is singular at its radius of convergence. Therefore $y_+(p,t)$ is its radius of convergence and $y_-(p,t) \leq - y_+(p,t)$. Checking that $y_-(p,t) < - y_+(p,t)$ just with our current material is cumbersome but could be done. However, we do not need to go through this since this inequality is a direct consequence of equations \eqref{eq:cpm} established with no computations at the end of Section \ref{sec:BPM}. Finally, to see that $y_+(p,t) >1$, we first note that for $p$ fixed, $y_+(p,t)$ is non increasing with respect to $|t|$ and then check $y_+(p,t_c) > 1$ by direct computation.
\end{proof}

\section{The root cluster as a Boltzmann map} \label{sec:BPM}

In this whole section, $p \in (0,1)$ and $t \in (0,t_c]$ are fixed.

\subsection{The weight sequence from \texorpdfstring{\cite{BCM}}{[BCM]} with the edge parameter}

In \cite[Section 2.2]{BCM} it was established that for a random site-percolated triangulation (conditioned on the event where both ends of the root edge are colored in black), the cluster of the root is a random Boltzmann map with weight sequence $\mathbf q (p,t) = \left(q_k(p,t) \right)_{k \geq 1}$ given by
\begin{equation} \label{eq:defqkt}
q_k(p,t) = \frac{1}{p} \left((pt)^{3/2} \delta_{\{k=3\}} + (p t^3)^{k/2} \sum_{l \geq 0} \binom{k+l-1}{k-1}  [y^l] T(1-p,t,ty) \right),
\end{equation}
for $k \geq 1$ (see \cite[Equation (9)]{BCM}). We briefly recall here what this statment means and how to obtain it.

\begin{figure}[ht!]
\centering
\includegraphics[width=0.7\linewidth]{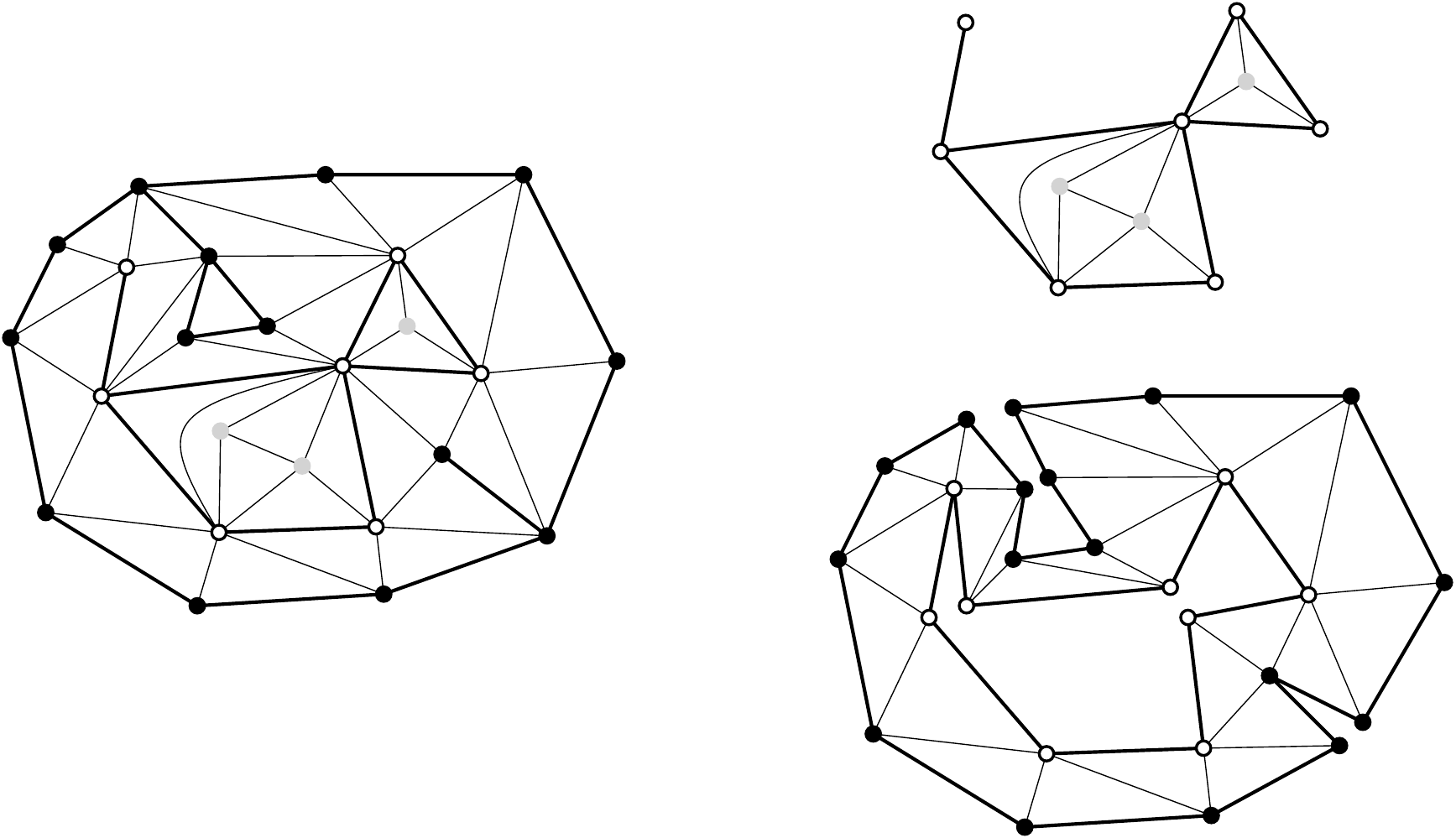}
\caption{\label{fig:gasket}Gasket decomposition: A face of the root cluster (black vertices in the figure) is filled with a triangulation with arbitrary white boundary (upper right figure, grey vertices can be either black or white) and a necklace (lower right figure) triangulating the region between the two.}
\end{figure}

For every percolated rooted planar triangulation $\mathfrak t$, we denote respectively by $\mathrm v_\circ (\mathfrak t)$ and  $\mathrm v_\bullet (\mathfrak t)$ the number of white (resp. black) vertices. Let $\mathcal {T}^{\mathrm{perc}}$ be the set of all percolated rooted triangulations with both end vertices of the root edge black. For every $\mathfrak t \in \mathcal {T}^{\mathrm{perc}}$, we denote by $\mathfrak C (\mathfrak t)$ the cluster of its root edge viewed as a planar rooted map (whose root is the root edge of the triangulation).
The fact that the root cluster is a random Boltzmann map comes from the following identity (see Equation (7) of \cite{BCM}): for every non atomic map $\mathfrak m$ one has
\begin{equation} \label{eq:MapMass}
\sum_{\mathfrak t \in \mathcal {T}^{\mathrm{perc}} \, : \, \mathfrak C (\mathfrak t) = \mathfrak m} t^{e(\mathfrak t)} p^{\mathrm v_\bullet (\mathfrak t)} (1-p)^{\mathrm v_\circ (\mathfrak t)}
= p^2 \,\cdot \prod_{f \in F(\mathfrak m)} q_{\mathrm{deg}(f)} (p,t).
\end{equation}
This identity stems from the classical gasket decomposition \cite{BBGa} sometimes called island decomposition \cite{BCM} and we briefly explain how it is obtained in the following lines (see Figure \ref{fig:gasket} for an illustration of these arguments). Fix $\mathfrak m$ a non atomic map and $\mathfrak t$ a site-percolated triangulation with root cluster $\mathfrak m$ colored black. For such a triangulation $\mathfrak t$, each face of its root cluster is filled with a triangulation with an arbitrary boundary of white vertices, and a \emph{necklace} of triangles with no additional vertices between this triangulation with a boundary and the cluster. For each cluster face of degree $k \geq 1$ filled with a triangulation with perimeter $l\geq 1$, there are $\binom{k+l-1}{k-1}$ different possible necklaces, and each of these necklaces require an additional $k+l$ edges. Taking into account the case where the cluster face has degree $3$ and can be part of the triangulation, this writes:
\begin{align*}
& \sum_{\mathfrak t \in \mathcal {T}^{\mathrm{perc}} \, : \, \mathfrak C (\mathfrak t) = \mathfrak m} t^{e(\mathfrak t)} p^{\mathrm v_\bullet (\mathfrak t)} (1-p)^{\mathrm v_\circ (\mathfrak t)}\\
& \quad = 
p^{|V(\mathfrak m)|} \, t^{|e(\mathfrak m)|} \prod_{f \in F(\mathfrak m)} \left(
\mathbf{1}_{\mathrm{deg}(f) = 3} + 
\sum_{l \geq 0} \binom{\mathrm{deg}(f)+l-1}{\mathrm{deg}(f)-1} t^{\mathrm{deg}(f)+l}  [y^l] T(1-p,t,y) \right),\\
&\quad = p^2 \cdot
\prod_{f \in F(\mathfrak m)}
\frac{1}{p}
(pt)^{\mathrm{deg}(f)/2}
\left(
\mathbf{1}_{\mathrm{deg}(f) = 3} + 
\sum_{l \geq 0} \binom{\mathrm{deg}(f)+l-1}{\mathrm{deg}(f)-1} t^{\mathrm{deg}(f)+l}  [y^l] T(1-p,t,y) \right),
\end{align*}
where we used Euler's relation $|V(\mathfrak m)| -2 = \sum_{f \in F(\mathfrak m)} \left( \mathrm{deg}(f)/2 - 1\right)$ in the last display. The expression \eqref{eq:MapMass} follows easily and we refer the reader to \cite[Section 2.2]{BCM} for additional details. Note for future reference that for any $k \geq 1$, the weight $q_k(p,t)$ is $1/p$ times the generating series of all triangulations of the $k$-gon with a weight $t$ per inner edge, a weight $\sqrt{pt}$ per boundary edge, and a weight $1-p$ per inner vertex adjacent to the boundary.

\bigskip

We can define the partition function of our percolated triangulations by:
\begin{equation} \label{eq:Zsum}
\mathcal Z (p,t) = \frac{1}{p^2} \cdot \sum_{\mathfrak t \in \mathcal {T}^{\mathrm{perc}}} t^{e(\mathfrak t)} p^{\mathrm v_\bullet (\mathfrak t)} (1-p)^{\mathrm v_\circ (\mathfrak t)}.
\end{equation}
From identity \eqref{eq:MapMass}, denoting by $\mathcal M$ the set of all non atomic rooted planar maps, we see that
\begin{align} \label{eq:partfun}
\mathcal Z (p,t) = \sum_{\mathfrak m \in \mathcal M} \prod_{f \in F(\mathfrak m)} q_{\mathrm{deg}(f)} (p,t).
\end{align}
Notice that this sum is finite when $p \in (0,1)$ and $t \in (0,t_c]$, meaning that the weight sequence $\mathbf q(p,t)$ is admissible in the sense of of \cite{MiermontInvariance}. We will need the asymptotic behavior of the coefficients in $t^3$ of the series $\mathcal Z(p,t)$:

\begin{prop} \label{prop:Ztcoef}
Fix $p \in (0,1)$, we have
\[
[t^{3n}] \mathcal Z(p,t) \underset{n\to\infty}{\sim}  \frac{\sqrt{2}\, \left(3 p-3+2 \sqrt{3}\right)}{2 p \sqrt{\pi}} \, t_c^{-3n} \, n^{-5/2}.
\]
\end{prop}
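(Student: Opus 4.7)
The plan is a standard singularity analysis: identify the singular behavior of $\mathcal Z(p,t)$ near its dominant singularity and apply the Flajolet--Sedgewick transfer theorem. Since $\mathcal Z$ is built from the series of Section \ref{sec:genser} (via the gasket decomposition), its dominant singularity should be at $t^3 = t_c^3 = \sqrt 3/36$, and the $n^{-5/2}$ decay corresponds to a leading singular term of order $(1-(t/t_c)^3)^{3/2}$.

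The first step is to derive a closed-form expression for $\mathcal Z(p,t)$ in terms of the series of Section \ref{sec:genser}. Starting from the definition \eqref{eq:Zsum} and summing the gasket identity \eqref{eq:MapMass} over non-atomic maps, one evaluates the resulting sum on each underlying rooted triangulation by summing the coloring weight; combined with the parametrization $tT_1(p,t) = pU(1-3U)/(1-2U)$ from Lemma \ref{lemm:parU}, this yields $\mathcal Z(p,t)$ as an explicit rational function of the parameter $U = U(t^3)$. Substituting the singular expansion \eqref{eq:singU} then produces an expansion of the form
\[
\mathcal Z(p,t) = A(p) + B(p)\left(1-\left(\tfrac{t}{t_c}\right)^{\!3}\right) + C(p)\left(1-\left(\tfrac{t}{t_c}\right)^{\!3}\right)^{\!3/2} + \mathcal{O}\!\left(\left(1-\left(\tfrac{t}{t_c}\right)^{\!3}\right)^{\!2}\right),
\]
where the $(1-(t/t_c)^3)^{1/2}$ term vanishes because $U_c = (3-\sqrt 3)/6$ is a critical point of $\mathcal Z$ viewed as a rational function of $U$ (a standard feature of ``pure gravity'' partition functions of planar maps; in particular, the same critical relation that forces $\hat w'(U_c)=0$ in Lemma \ref{lemm:parU}).

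Finally, applying the transfer theorem, $[z^n](1-z)^{3/2} \sim \frac{1}{\Gamma(-3/2)}\, n^{-5/2} = \frac{3}{4\sqrt\pi}\,n^{-5/2}$, one obtains $[t^{3n}]\,\mathcal Z(p,t) \sim \frac{3\,C(p)}{4\sqrt\pi}\, t_c^{-3n}\, n^{-5/2}$, and matching to the announced asymptotic reduces to verifying the algebraic identity $C(p) = \frac{2\sqrt 2\,(3p-3+2\sqrt 3)}{3p}$. The main obstacle is the first step, namely obtaining a tractable closed-form expression for $\mathcal Z(p,t)$; once this is achieved, the singular expansion and the identification of $C(p)$ are routine substitutions, best handled in the Maple companion file \cite{Maple}.
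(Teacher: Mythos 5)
Your overall blueprint is correct and is essentially the paper's: obtain a closed form for $\mathcal Z(p,t)$, plug in the singular expansion of $U$, observe that the $(1-(t/t_c)^3)^{1/2}$ term cancels, and transfer. The matching of $C(p)$ to $\tfrac{2\sqrt 2(3p-3+2\sqrt 3)}{3p}$ is also the right arithmetic. However, the step you yourself flag as the "main obstacle" --- deriving the closed form --- is not just the hard part; the way you describe it is liable to go wrong. "Summing the coloring weight on each underlying rooted triangulation" would, naively, make $\mathcal Z$ independent of $p$, since the site-percolation weights on the non-root vertices sum to $1$. The $p$-dependence of $\mathcal Z$ comes entirely from the fact that the root edge may be a loop: the conditioning ``both ends of the root edge black'' contributes a factor $p$ when the root edge is a loop and $p^2$ otherwise. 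The paper handles this cleanly by \emph{opening the root edge}, which bijects rooted sphere triangulations with triangulations of the $2$-gon and gives
\[
\mathcal Z(p,t) = \frac{1}{p^2 t}\,T_2(p,t),
\]
where $T_2(p,t) = [y^2]T(p,t,y)$ already carries the loop/non-loop distinction through its boundary-vertex count. One then uses the algebraic equation \eqref{eq:Ty} to express $T_2$ in terms of $T_1$ as $\mathcal Z(p,t) = \tfrac{1}{t^3}\,tT_1(p,t)\bigl(1 + \tfrac{1-p}{p^3}\,tT_1(p,t)\bigr)$, and from there $U$ enters via Lemma~\ref{lemm:parU}. You should either reproduce this opening-of-the-root-edge argument or spell out the loop-vs-non-loop bookkeeping explicitly; otherwise the claim that one obtains an explicit rational function in $U$ with the correct $p$-dependence is unsubstantiated.

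Two smaller remarks. First, your heuristic that the $1/2$-power term vanishes "because $U_c$ is a critical point of $\mathcal Z$ as a rational function of $U$" is plausible but not established; the paper simply computes the expansion and observes the cancellation. If you want a structural explanation you would need to verify $\partial_U R(U_c)=0$ for the specific rational function $R$ above, which is extra work over the direct computation. Second, the transfer step is fine: $\Gamma(-3/2)=\tfrac{4\sqrt\pi}{3}$, and your normalization gives the announced constant.
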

\begin{proof}
By opening the root edge of the triangulations appearing in the sum \eqref{eq:Zsum}, we can see that, for $p \in (0,1)$ and $|t| \leq t_c$, the partition function $\mathcal Z(p,t)$ is given by
\begin{equation}\label{eq:Z}
\mathcal Z (p,t) = \frac{1}{p^2 \, t} T_2(p,t) = \frac{1}{p^2 \, t^3}  \, t^2 T_2(p,t) = \frac{1}{t^3} tT_1(p,t) \left( 1 +\frac{1-p}{p^3} tT_1(p,t) \right).
\end{equation}
From Lemma \ref{lemm:parU}, The function $tT_1(t)$ seen as a series in $t^3$ has a unique dominant singularity at $t^3_c$ and we can obtain its asymptotic expansion at $t_c^3$ from the expansion of $U$. As a consequence, the function $\mathcal Z (p,t)$ also has a unique dominant singularity at $t_c^3$ ans we can easily obtain the following asymptotic expansion:
\begin{align*}
\mathcal Z(p,t) &= 
\frac{3 \sqrt{3}\, \left(7-4 \sqrt{3}\right) + p}{4 p}
-\frac{\sqrt{3}\, \left(3 p-27+16 \sqrt{3}\right)} {4 p} \, \left( 1- t^3/t_c^3 \right) \\
& \qquad + \frac{2 \sqrt{2}\, \left(3 p-3+2 \sqrt{3}\right)}{3 p} \, \left( 1- (t/t_c)^3)^{3/2} \right)
+ \mathcal O \left( \left( 1- t^3/t_c^3)^2 \right) \right).
\end{align*}
The asymptotic behavior of $[t^{3n}] \mathcal Z (p,t)$ then follows from the classical transfer Theorem \cite[Theorem VI.4]{FS}.
\end{proof}

\subsection{Generating series of the weights and of the associated Boltzmann maps}

The generating series of the weight sequence $\mathbf q (p,t)$ is straightforward to compute. As shown by the following lines, the expression we obtain is valid for every $p\in (0,1)$, $t$ in the greater domain of analycity of $U(t^3)$, and $z$ close enough to $0$. More precisely we need $\sqrt{p t^3} z \in \mathbb C \setminus [1 , + \infty )$ and $\left( 1 - \sqrt{p t^3} z\right)^{-1}$ must belong to the domain of analycity in $y$ of $T(t,1-p,ty)$ which was studied in the previous Section. Under these conditions for $p$, $t$ and $z$, the computation of the weight sequence generating function is as follows:
\begin{align}
F_{\mathbf q (p,t) }(z) :&= \sum_{k\geq 1} q_k(p,t) z^k, \notag\\
&= \frac{1}{p} (pt)^{3/2} z^3 + \frac{1}{p} \sum_{l \geq 0}
 [y^l] T(1-p,t,ty) \sum_{k \geq 1} \binom{k+l-1}{k-1} (p t^3)^{k/2}  z^k, \notag\\
&= \frac{1}{p} (pt)^{3/2} z^3 +  \frac{1}{p} \sqrt{p t^3} z \sum_{l \geq 0}
 [y^l] T(1-p,t,ty) \, \left( 1 - \sqrt{p t^3} z\right)^{-l-1}, \notag \\
&= \frac{1}{p} (pt)^{3/2} z^3 + \frac{1}{p} \frac{\sqrt{pt^3}z}{1-\sqrt{pt^3}z} T \left( 1-p,t,\frac{t}{1-\sqrt{pt^3}z} \right) \label{eq:genF}.
\end{align}

\bigskip

We will need expressions for the pointed and unpointed disk generating functions associated to the weight sequence $\mathbf q (p,t)$. For every $l \geq 1$, let $\mathcal M^l$ denote the set of all rooted planar maps with root face of degree $l$ (for $l=1$, this set contains only the atomic map). The unpointed disk generating function is defined as follows for $|z|$ large enough:
\[
W_{\mathbf q (p,t)}(z) := \sum_{l \geq 1} \, \left( \sum_{\mathfrak m \in \mathcal M^l} \prod_{f \in F(\mathfrak m) \setminus \{ f_r \}} q_{\mathrm{deg} (f)}(p,t) \right) \, z^{-l-1},
\]
where we denote the root face of a planar map by $f_r$. From our discussion establishing \eqref{eq:MapMass}, we can compute the coefficients of these series. Indeed, from Equation \eqref{eq:MapMass} and the fact that $q_l$ is $1/p$ times the generating series of triangulations with a boundary of perimeter $l$ counted with a weight  $t$ per inner edge, a weight $\sqrt{pt}$ per boundary edge, and a weight $1-p$ per inner vertex adjacent to the boundary, we have:
\begin{align*}
\sum_{\mathfrak m \in \mathcal M^l} \, \sum_{\mathfrak t \in \mathcal {T}^{\mathrm{perc}} \, : \, \mathfrak C (\mathfrak t) = \mathfrak m} t^{e(\mathfrak t)} p^{\mathrm v_\bullet (\mathfrak t)} (1-p)^{\mathrm v_\circ (\mathfrak t)}
& = 
p  \sqrt{p \,t}^{-l} \, q_l(p,t) \, \sum_{\mathfrak t \in \mathcal T_l}  t^{e(\mathfrak t)} p^{\mathrm v_{\mathrm{out}}(\mathfrak t)}.
\end{align*}
Comparing with the right hand side of \eqref{eq:MapMass}, this gives for every $l \geq 1$:
\begin{equation} \label{eq:Wbulletdef}
\sum_{\mathfrak m \in \mathcal M^l} \prod_{f \in F(\mathfrak m) \setminus \{ f_r \}} q_{\mathrm{deg} (f)}(p,t) 
= \frac{1}{p\,\sqrt{p t}^l} \, [y^l] T(p,t,y),
\end{equation}
and thus
\begin{equation} \label{eq:expW}
W_{\mathbf q (p,t) }(z) = \frac{1}{p\,z} \, T \left( p,t,\frac{t}{\sqrt{pt^3}z} \right).
\end{equation}

The pointed disk generating function is defined similarly:
\[
W_{\mathbf q (p,t)}^\bullet(z) = \sum_{l \geq 1} \, \left( \sum_{\mathfrak m \in \mathcal M^l} |V(\mathfrak m)| \, \prod_{f \in F(\mathfrak m) \setminus \{ f_r \}} q_{\mathrm{deg} (f)}(p,t) \right) \, z^{-l-1}.
\]
It has the following universal form: For $p\in(0,1)$ and and $t \in (0,t_c]$ fixed, there exists real numbers $c_+(p,t) >2$ and $c_-(p,t) \in (-c_+(p,t) , c_+(p,t))$ (the lower bound is excluded since our maps are not bipartite) such that for $z \in \mathbb C \setminus [c_-(p,t) , c_+(p,t)]$ one has  
\begin{equation} \label{eq:expWbullet}
W_{\mathbf q (p,t) }^\bullet (z) = \frac{1}{\sqrt{(z-c_+(p,t))(z-c_-(p,t))}}.
\end{equation}
This expression, sometimes called the one-cut Lemma, appears in several references. See for example \cite[Proposition 12]{BuddInfBoltz} or \cite[Section 6.1]{BBGb}. It is also established in these articles that the two disk generating functions $W$ and $W^\bullet$ have the same domain of analycity. Comparing our two expressions \eqref{eq:expW} and \eqref{eq:expWbullet}, we see that as a consequence
\begin{align} \label{eq:cpm}
c_\pm(p,t) = \frac{1}{\sqrt{p \, t^3} \,  y_\pm (p,t)}, 
\end{align}
where $ y_\pm (p,t)$ are the respective positive and negative singularities in $y$ of the series $T(p,t,ty)$ defined in Lemma \ref{lemm:parT}. Note that, as mentioned in the proof of Lemma \ref{lemm:parT}, this directly implies that $y_-(p,t) < - y_+(p,t)$.

\section{Percolation probability} \label{sec:main}

Recall that for any triangulation with a site percolation configuration, $\mathfrak C$ denotes the percolation cluster of its root vertex. Also recall that $\mathbb P^p_\infty$ denotes the law of the UIPT with vertices colored independently black with probability $p$ and white with probability $1-p$, conditioned on the event where the root edge has both end vertices colored black. We start by identifying the law of $\mathfrak C$ under $\mathbb P^p_\infty$ on the event where it is finite:

\begin{prop} \label{prop:probaCm}
For every $p \in (0,1)$ and every non atomic rooted finite map $\mathfrak m$ we have
\[
\mathbb P^p_\infty \left( \mathfrak C = \mathfrak m \right) = \left( \prod_{f \in F(\mathfrak m)} q_{\mathrm{deg} (f)}(p,t_c) \right) \cdot \sum_{f \in F(\mathfrak m)} \frac{(p t^3_c)^{\mathrm{deg}(f)/2} \delta_{\mathrm{deg} (f)} (p)}{q_{\mathrm{deg} (f)}(p,t_c)},
\]
where the numbers $\left( \delta_{k} (p) \right)_{k\geq 1}$ are defined in Lemma \ref{lem:serDelta}.
\end{prop}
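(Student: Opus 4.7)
The plan is to apply the limit formula (\ref{eq:limlaw}), which reduces the statement to computing
\[
\mathbb{P}^p_\infty(\mathfrak{C} = \mathfrak{m}) = \lim_{n\to\infty} \frac{[t^{3n}]\prod_{f\in F(\mathfrak{m})} q_{\deg(f)}(p,t)}{[t^{3n}]\mathcal{Z}(p,t)}.
\]
The denominator asymptotics are already given by Proposition~\ref{prop:Ztcoef}, so the whole problem reduces to a singularity analysis of the finite product in the numerator at its dominant singularity $t=t_c$.

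I would linearize the product around $t=t_c$. Writing $q_k(p,t) = q_k(p,t_c) + r_k(p,t)$ with $r_k(p,t_c)=0$, a direct expansion gives
\[
\prod_{f\in F(\mathfrak{m})} q_{\deg(f)}(p,t) = \prod_{f\in F(\mathfrak{m})} q_{\deg(f)}(p,t_c) + \sum_{f\in F(\mathfrak{m})} r_{\deg(f)}(p,t) \prod_{f'\neq f} q_{\deg(f')}(p,t_c) + \kR(t),
\]
where $\kR(t)$ collects the terms with at least two $r$-factors. The main obstacle --- and the content of Lemma~\ref{lem:serDelta} --- is to show that the singular expansion of each $q_k(p,t)$ at $t_c$ contains \emph{no} $(1-t^3/t_c^3)^{1/2}$ term, so that its leading singularity is of order $(1-t^3/t_c^3)^{3/2}$ and its coefficient, suitably normalized by $(p t_c^3)^{k/2}$, yields $\delta_k(p)$. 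This vanishing is the same cancellation already visible for the partition function in Proposition~\ref{prop:Ztcoef}, and it reflects the fact that $U_c$ is a critical point of the rational functions in $U$ provided by Lemma~\ref{lemm:parT} (a simple instance being $\hat T_1'(p,U_c)=0$, which follows from $1-6U_c+6U_c^2=0$). Granting this, each $r_k(p,t)$ is of order at least $(1-t^3/t_c^3)$ at $t_c$, so any product of two $r$-factors has leading singular behavior of order $(1-t^3/t_c^3)^{5/2}$; the transfer theorem \cite[Thm.~VI.4]{FS} then yields $[t^{3n}]\kR(t) = o([t^{3n}]\mathcal{Z}(p,t))$.

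With these ingredients in hand, taking the $[t^{3n}]$-coefficient of the linearized product, dividing by $[t^{3n}]\mathcal{Z}(p,t)$, and using that the $[t^{3n}]$-coefficient of the constant term vanishes for $n\geq 1$, one obtains
\[
\frac{[t^{3n}]\prod_{f\in F(\mathfrak{m})} q_{\deg(f)}(p,t)}{[t^{3n}]\mathcal{Z}(p,t)} = \sum_{f\in F(\mathfrak{m})} \frac{[t^{3n}] q_{\deg(f)}(p,t)}{[t^{3n}]\mathcal{Z}(p,t)} \prod_{f'\neq f} q_{\deg(f')}(p,t_c) + o(1).
\]
Lemma~\ref{lem:serDelta} then supplies the asymptotic $[t^{3n}] q_{\deg(f)}(p,t)/[t^{3n}]\mathcal{Z}(p,t) \to (p t_c^3)^{\deg(f)/2}\delta_{\deg(f)}(p)$ as $n\to\infty$, and passing to the limit recovers exactly the formula stated in Proposition~\ref{prop:probaCm}.
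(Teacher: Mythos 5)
Your overall strategy --- start from \eqref{eq:lawclustersizen}, linearize the finite product of weights around $t_c$, argue that terms with two or more singular factors are subdominant, and invoke Lemma~\ref{lem:serDelta} for the coefficient ratios --- is the one the paper uses to fill in the step it calls ``very classical''. The observation that the singular expansion of each weight in $1-t^3/t_c^3$ has no square-root term, so that the leading singularity is $(1-t^3/t_c^3)^{3/2}$, is also exactly what drives the result.

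The coefficient-extraction step has a gap, however, that shows up whenever $\mathfrak m$ has a face of odd degree. From \eqref{eq:defqkt}, $q_k(p,t)$ carries a prefactor $(pt^3)^{k/2}$; for odd $k$ it is $t^{3/2}$ times a power series in $t^3$, so $[t^{3n}]q_k(p,t)=0$ identically. The limit you quote, $[t^{3n}]q_{\mathrm{deg}(f)}(p,t)/[t^{3n}]\mathcal Z(p,t)\to(pt_c^3)^{\mathrm{deg}(f)/2}\delta_{\mathrm{deg}(f)}(p)$, is therefore false for odd $\mathrm{deg}(f)$ --- Lemma~\ref{lem:serDelta} is in fact stated for $q_k(p,t)\sqrt{pt^3}^{-k}$, which is a genuine series in $t^3$. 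The same issue breaks your bound on $\kR$: for a map with odd-degree faces, each linear term $r_{\mathrm{deg}(f)}\prod_{f'\neq f}q_{\mathrm{deg}(f')}(p,t_c)$ carries an odd net power of $t^{3/2}$ and contributes zero to $[t^{3n}]$, while $\kR$, which you discard as an error term, actually carries the full coefficient (only products with the parity constraint $\sum_f\mathrm{deg}(f)\in 2\Z$ recombine into honest powers of $t^3$). The correct setup --- and what the paper's proof does --- is to first factor out the overall prefactor $(pt^3)^{\mathrm e(\mathfrak m)}$, an integer power of $t^3$ since $\sum_f\mathrm{deg}(f)=2\mathrm e(\mathfrak m)$, and linearize $\prod_f(pt^3)^{-\mathrm{deg}(f)/2}q_{\mathrm{deg}(f)}(p,t)$ instead, each factor of which is a bona fide power series in $t^3$. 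The shift of index $n\mapsto n-\mathrm e(\mathfrak m)$ that this induces, combined with the $t_c^{-3n}n^{-5/2}$ asymptotics of $[t^{3n}]\mathcal Z(p,t)$ from Proposition~\ref{prop:Ztcoef}, then regenerates precisely the $(pt_c^3)^{\mathrm{deg}(f)/2}$ factors appearing in the statement of the proposition.
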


\begin{proof}
For $p \in (0,1)$ and $n \geq 1$, we denote by $\mathbb P_n^p$ the law of a uniform triangulation of the sphere with $3n$ edges and vertices colored independently black with probability $p$ and white with probability $1-p$, conditioned on the event where both end vertices of the root edge are colored black. From equation \eqref{eq:partfun}, we can write that for every finite non atomic rooted map $\mathfrak m$ we have
\begin{equation} \label{eq:lawclustersizen}
\mathbb P_n^p \left( \mathfrak C = \mathfrak m \right) =
\frac{[t^{3n}] \prod_{f \in F(\mathfrak m )} q_{\mathrm{deg}(f)} (p ,t)}{[t^{3n}] \mathcal Z ( p ,t)}.
\end{equation}
Since the event $\{ \mathfrak C = \mathfrak m \}$ is continuous for the local topology, the limit as $n \to \infty$ of the previous display gives access to the annealed law of the percolation cluster of the root in the UIPT.

As can be seen from Proposition \ref{prop:Ztcoef} and Lemma \ref{lem:serDelta}, the asymptotic behaviors of $[t^{3n}] \mathcal Z (p ,t)$ and of $[t^{3n}] (p t^3)^{-k/2} q_{k} (p ,t)$ for every fixed $k \geq1$ are all of the form $\mathrm{Cst} \cdot t_c^{-3n} n^{-5/2}$. It is then very classical to establish the limit
\begin{align*}
\lim_{n \to \infty} \frac{[t^{3n}] \prod_{f \in F(\mathfrak m )} q_{\mathrm{deg}(f)} (p ,t)}{[t^{3n}] \mathcal Z ( p ,t)}
&=
\lim_{n \to \infty} \frac{[t^{3n}] (pt^3)^{\mathrm e(\mathfrak m)} \prod_{f \in F(\mathfrak m )} (p t^3)^{- \mathrm{deg}(f)/2} q_{\mathrm{deg}(f)} (p ,t)}{[t^{3n}] \mathcal Z ( p ,t)},\\
& = \sum_{f \in F(\mathfrak m )}  (p t^3_c)^{\mathrm{deg}(f)/2} \delta_{\mathrm{deg}(f)} (p) \, \prod_{f' \in F(\mathfrak m ),f' \neq f} q_{\mathrm{deg}(f)} (p ,t_c),
\end{align*}
proving the proposition.
\end{proof}

\bigskip

Our next result is an integral formula for the probability that the root cluster $\mathfrak C$ of the UIPT is infinite:

\begin{prop} \label{prop:integralProbab}
Recall the definition of the quantities $c_+(p,t_c)$ and $c_+(p,t_c)$ given in equations~\eqref{eq:expWbullet} and~\eqref{eq:cpm}. For every $p \in (0,1)$ one has
\begin{equation} \label{eq:ProbFiniteInt}
\begin{split}
&\mathbb P_\infty^p \left(|\mathfrak C| < \infty \right) \\
& =\frac{1}{2  \pi} \int_{c_-(p,t_c)}^{c_+(p,t_c)} \frac{dz}{z} \Delta \left( p,\sqrt{p t^3_c} \, z \right) \left(z + \frac{c_+(p,t_c) + c_-(p,t_c)}{2} \right) \sqrt{(c_+(p,t_c) - z)(z-c_-(p,t_c))},
\end{split}
\end{equation}
where the function $\Delta(p,z)$ is defined in Lemma~\ref{lem:serDelta}.
\end{prop}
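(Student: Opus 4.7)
The plan is to sum the identity of Proposition~\ref{prop:probaCm} over all finite non-atomic rooted maps, recognize the result as a contour integral involving $W_{\mathbf q(p,t_c)}$, and evaluate it by collapsing the contour onto the cut $[c_-(p,t_c),c_+(p,t_c)]$.

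First, I would start from Proposition~\ref{prop:probaCm}, sum over $\mathfrak m$ and swap the two sums. Each pair $(\mathfrak m, f)$ with $\deg(f)=k$ is canonically a rooted map in $\mathcal M^k$ whose root face is $f$, so grouping by this degree gives
\[
\mathbb P_\infty^p(|\mathfrak C| < \infty) = \sum_{k\geq 1} (pt_c^3)^{k/2}\,\delta_k(p) \sum_{\mathfrak m\in\mathcal M^k}\prod_{f\in F(\mathfrak m)\setminus\{f_r\}} q_{\deg(f)}(p, t_c).
\]
The inner sum is exactly $[z^{-k-1}]\, W_{\mathbf q(p,t_c)}(z)$ by the definition of the unpointed disk generating function in Section~\ref{sec:BPM}. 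Representing this coefficient as a contour integral around a large counterclockwise circle $\gamma$ enclosing the cut, and interchanging sum and integral via Fubini (which requires that $\Delta(p,\cdot)$ has radius of convergence strictly greater than $\sqrt{pt_c^3}\, c_+(p,t_c)$, a property that I would check from the definition of $\Delta$ in Lemma~\ref{lem:serDelta}), this rearranges as
\[
\mathbb P_\infty^p(|\mathfrak C| < \infty) = \frac{1}{2\pi i}\oint_\gamma \Delta\bigl(p, \sqrt{pt_c^3}\, z\bigr)\, W_{\mathbf q(p,t_c)}(z)\, dz.
\]

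Next, since both $\Delta(p,\sqrt{pt_c^3}\cdot)$ and $W_{\mathbf q(p,t_c)}$ are analytic outside $[c_-,c_+]$, I collapse $\gamma$ onto a keyhole contour around the cut and reduce the expression to
\[
\frac{1}{2\pi i}\int_{c_-}^{c_+} \Delta\bigl(p,\sqrt{pt_c^3}\, z\bigr)\bigl[W(z+i0)-W(z-i0)\bigr]\, dz.
\]
The remaining task is to identify the discontinuity of $W$ across its cut as
\[
W(z+i0) - W(z-i0) = \frac{i}{z}\!\left(z+\frac{c_++c_-}{2}\right)\!\sqrt{(c_+-z)(z-c_-)},
\]
from which the proposition follows upon using $\frac{1}{2\pi i}\cdot i = \frac{1}{2\pi}$.

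The main obstacle is this last computation. One route is to derive it directly from \eqref{eq:expW} and the parametrization of Lemma~\ref{lemm:parT}: as $z$ crosses the cut, the variable $Y = 1/(\sqrt{pt_c^3}\, z)$ crosses a singular point $y_\pm(p,t_c)$ of $T(p,t_c,t_c\cdot)$, around which the stationary point $V_\pm$ of $\hat y(p,U_c,\cdot)$ produces a square-root branch in $Y - y_\pm$; the change of variable from \eqref{eq:cpm} then turns $\sqrt{Y-y_\pm}$ into $\sqrt{c_\pm - z}$ up to a rational factor. The delicate part is combining the two endpoint contributions into the single square root $\sqrt{(c_+-z)(z-c_-)}$ with the prescribed rational prefactor. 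A conceptually cleaner alternative is to exploit the universal one-cut structure of Boltzmann disk generating functions: $W$ must decompose as $W(z) = P(z) - Q(z)\sqrt{(z-c_+)(z-c_-)}$ with $P,Q$ meromorphic on $\mathbb C\setminus\{0\}$ (the common domain of analyticity of $W$ and $W^\bullet$ recorded in \eqref{eq:expWbullet}--\eqref{eq:cpm}); then $Q$ is pinned down by the asymptotics $W(z)\sim w_1/z^2$ at infinity together with the pole of $W$ at $z=0$ forced by the prefactor $1/(pz)$ in~\eqref{eq:expW}, yielding the factor $\frac{1}{z}(z+(c_++c_-)/2)$ essentially by matching a handful of Laurent coefficients.
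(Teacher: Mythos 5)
Your combinatorial reduction contains a genuine error at the crucial step, which is precisely where the paper needs to work hardest. You claim that summing the identity of Proposition~\ref{prop:probaCm} over $\mathfrak m$ and grouping by $k=\deg(f)$ yields an inner sum equal to $[z^{-k-1}]W_{\mathbf q(p,t_c)}(z)$, on the grounds that ``each pair $(\mathfrak m, f)$ with $\deg(f)=k$ is canonically a rooted map in $\mathcal M^k$ whose root face is $f$.'' This is not a bijection: $\mathfrak m$ already carries its own root edge, which has nothing to do with the distinguished face $f$, so a pair $(\mathfrak m, f)$ is a rooted map with an extra marked face, not simply an element of $\mathcal M^k$. (Concretely: a fixed unrooted map with $n$ edges and a face $f$ of degree $k$ contributes $2n$ pairs $(\mathfrak m, f)$ but only $k$ elements of $\mathcal M^k$ having $f$ as root face.) Because of this, the correct generating function is not the disk $W_{\mathbf q(p,t_c)}$ but a cylinder-type object. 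The paper handles this by first opening the root edge (turning $\mathcal M$ into $\mathcal M^2$, so that $f$ remains a non-root face), then additionally rooting $f$ at the cost of a factor $1/k$, arriving at the space $\mathcal M^{(2,k)}$ of cylinder maps. The resulting series $\Phi(p,z)=\sum_k\varphi_k z^k$ with $\varphi_k=\frac1k\sum_{\mathfrak m\in\mathcal M^{(2,k)}}\prod_{f'\neq f_1,f_2}q_{\deg(f')}$ is then computed from the universal form of the cylinder generating function $W^{\mathrm{cyl}}_{\mathbf q(p,t_c)}$ (extract the degree-$2$ slot, take an antiderivative), and it is this $\Phi(p,1/z)/z$, not $W(z)$, whose Hadamard pairing with $\Delta$ gives the probability.

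As a downstream symptom of the same error, the discontinuity formula you propose,
\begin{equation*}
W(z+i0) - W(z-i0) = \frac{i}{z}\left(z+\frac{c_++c_-}{2}\right)\sqrt{(c_+-z)(z-c_-)},
\end{equation*}
is not a property of $W$ but (up to sign/orientation conventions) the discontinuity of the cylinder antiderivative $\Phi(p,1/z)$, which the paper derives in closed form from $W^{\mathrm{cyl}}$. The actual $W$, given by \eqref{eq:expW} as $\frac{1}{pz}T(p,t_c,t_c/(\sqrt{pt_c^3}z))$, has a cut determined by the non-algebraic singular behavior of $T$ at $y_\pm$ and does not have the simple algebraic jump you write down, nor does the one-cut decomposition argument you sketch pin it down by Laurent-coefficient matching (the one-cut Lemma \eqref{eq:expWbullet} concerns $W^\bullet$, not $W$, and $W$ is not of polynomial-times-square-root form for an infinitely supported weight sequence). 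So both the combinatorial bijection and the analytic identity it is meant to feed into would fail. The fix is to replace $W$ by the cylinder-derived $\Phi$ throughout, exactly as the paper does; your overall scheme (sum, recognize as Hadamard product, contour integral, collapse onto the cut) is otherwise the right one.
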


\begin{proof}
We start with the formula established in Proposition \ref{prop:probaCm}. Summing over every finite non atomic map gives for every $p \in (0,1)$:
\[
\mathbb P_\infty^p \left(|\mathfrak C| < \infty \right)
=
\sum_{\mathfrak m \in \mathcal M} \, 
\sum_{f \in F(\mathfrak m )} (p t^3_c)^{\mathrm{deg}(f)/2} \delta_{\mathrm{deg}(f)} (p) \, \prod_{f' \in F(\mathfrak m ) \setminus \{f\}} q_{\mathrm{deg}(f')} (p ,t_c).
\]
By opening the root edges of the maps $\mathfrak m$ involved in the previous display, the sum transforms into a sum over maps in $\mathcal M^2$, the set of all rooted planar maps with root face of degree $2$:
\[
\mathbb P_\infty^p \left(|\mathfrak C| < \infty \right)
=
\sum_{\mathfrak m \in \mathcal M^2} \, 
\sum_{f \in F(\mathfrak m ) \setminus \{f_r\}} (p t^3_c)^{\mathrm{deg}(f)/2}\delta_{\mathrm{deg}(f)} (p) \, \prod_{f' \in F(\mathfrak m )\setminus \{f , f_r\}} q_{\mathrm{deg}(f')} (p ,t_c),
\]
where $f_r$ denotes the root face of a map. By rooting the non root face $f$ involved in the last sum (so that $f$ lies on the right hand side of this additional root) we can transform the sum into a sum over maps of the cylinder. For $l_1$, $l_2 \geq 1$, let $\mathcal M^{(l_1,l_2)}$ denote the set of all planar maps with two distinct marked rooted faces $f_1,\, f_2$ of respective degree $l_1,\, l_2$ (rooted so that the corresponding marked faces lie on the right hand side of their respective root). Taking into account the $\mathrm{deg} (f)$ possible roots for the face $f$ in the sum we have:
\begin{equation} \label{eq:probaCfiniteCyl}
\mathbb P_\infty^p \left(|\mathfrak C| < \infty \right)
=
\sum_{k \geq 1} \, \frac{1}{k} \,
\sum_{\mathfrak m \in \mathcal M^{(2,k)}} \, 
(p t^3_c)^{k/2} \delta_{k} (p) \, \prod_{f' \in F(\mathfrak m )\setminus \{f_1 , f_2\}} q_{\mathrm{deg}(f')} (p ,t_c).
\end{equation}
Setting for every $k \geq 0$
\[
\varphi_k(p) = \frac{1}{k} \,
\sum_{\mathfrak m \in \mathcal M^{(2,k)}} \, 
\prod_{f' \in F(\mathfrak m )\setminus \{f_1 , f_2\}} q_{\mathrm{deg}(f')} (p ,t_c),
\]
and denote the generating series of these numbers by
\[
\Phi(p,z) = \sum_{k \geq 1} \varphi_k(p) z^k,
\]
the sum in \eqref{eq:probaCfiniteCyl} takes the form of the Hadamard product of $\Delta(p, \sqrt{p t^3_c} z)$ and $\Phi (p, z)$ evaluated at $z=1$:
\[
\mathbb P_\infty^p \left(|\mathfrak C| < \infty \right)
=
\sum_{k \geq 1} (p t^3_c)^{k/2} \delta_k(p) \cdot \varphi_k(p)
= \Delta \left( p,\sqrt{p t^3_c} \, z \right) \odot \Phi (p,z) \vert_{z=1}.
\]
We will compute this Hadamard product with the help of its contour integral representation:
\[
\mathbb P_\infty^p \left(|\mathfrak C| < \infty \right)
=
\frac{1}{2i\pi}
\oint_\gamma \frac{dz}{z} \Delta \left( p,\sqrt{p t^3_c} \, z \right) \, \Phi(p,1/z),
\]
where the contour $\gamma$ must lie in a region enclosing $0$ where both functions $z \mapsto\Delta \left( p,\sqrt{p t^3_c} \, z \right)$ and $z \mapsto \Phi (p,1/z)$ are analytic (see for example \cite[Section VI.10.2]{FS}).

\bigskip

To compute this integral, we first have to compute $\Phi$. To this end, we introduce the cylinder generating functions of Boltzmann planar maps. It is defined for $|z_1|$ and $|z_2|$ large enough by
\[
W^{\mathrm{cyl}}_{\mathbf q(p,t)} (z_1,z_2) = \sum_{l_1,l_2 \geq 0}
\frac
{\sum_{\mathfrak m \in \mathcal M^{(l_1,l_2)}} \, 
\, \prod_{f' \in F(\mathfrak m )\setminus \{f_1 , f_2\}} q_{\mathrm{deg}(f')} (p ,t)}
{z_1^{l_1+1} \, z_2^{l_2+1}}.
\]
In a similar fashion than the pointed disk generating function $W^\bullet$ of Section \ref{sec:genser}, this series has a universal form involving the two constants $c_\pm(p,t)$:
\[
\begin{split}
& W^{\mathrm{cyl}}_{\mathbf q(p,t)} (z_1,z_2) \\
& = \frac{1}{2(z_1 - z_2)^2} \left(
W^\bullet_{\mathbf q(p,t)} (z_1) W^\bullet_{\mathbf q(p,t)} (z_2)
\left(z_1 z_2  - \frac{c_+(p,t) + c_-(p,t)}{2} (z_1 + z_2) + c_+(p,t) c_-(p,t) \right)
 -1 \right).
\end{split}
\]
This formula appears in \cite{BorotEG}, \cite{Eynard} and a multitude of other references. We refer to \cite{AM} for a proof and a review of the literature on this matter.

Of particular interest to us will be the generating function where the first root face has degree 2:
\begin{align*}
[z_1^{-3} ] & W^{\mathrm{cyl}}_{\mathbf q(p,t)} (z_1,z)\\
&= 
\sum_{k \geq 0} \, z^{-k-1} \, 
\sum_{\mathfrak m \in \mathcal M^{(2,k)}} \, 
\, \prod_{f' \in F(\mathfrak m )\setminus \{f_1 , f_2\}} q_{\mathrm{deg}(f')} (p ,t),\\
&= - z + W^\bullet_{\mathbf q(p,t)} (z)
\left( z^2 - \frac{c_+(p,t) + c_-(p,t)}{2} z +\frac{1}{4} c_+(p,t) c_-(p,t) -\frac{1}{8} \left( c_+^2(p,t) + c^2_-(p,t) \right) \right).
\end{align*}
The antiderivative of this function has a simple expression giving the identity:
\begin{align*}
\Phi (p,1/z) & =
\sum_{k \geq 0} \, \frac{1}{k}\, z^{-k} \, 
\sum_{\mathfrak m \in \mathcal M^{(2,k)}} \, 
\, \prod_{f' \in F(\mathfrak m )\setminus \{f_1 , f_2\}} q_{\mathrm{deg}(f')} (p ,t_c), \\
&= \frac{z^2}{2}
- \frac{z + \frac{c_+(p,t_c) + c_-(p,t_c)}{2}}{2} \sqrt{(z - c_+(p,t_c) ) (z - c_-(p,t_c) ) }.
\end{align*}

From this expression, we see that the function $z \mapsto \Phi(p,z)$ is analytic on $\mathbb C \setminus [c_-(p,t_c) , c_+(p,t_c)]$. From Lemma \ref{lem:serDelta}, we know that $z \mapsto \Delta(p,\sqrt{p t^3_c}z)$ is analytic on $\mathbb C \setminus \left[ c_+(p,t_c) , + \infty \right)$. We cannot directly pick an appropriate contour $\gamma$ to compute our Hadamard product of series evaluated at $1$, however, if we take $w \in (0,1)$ we have
\begin{align}
\Delta \left( p,\sqrt{p t^3_c} \, z \right) \odot \Phi (p,z) \vert_{z=w}=
\frac{1}{2i\pi}\oint_{\gamma(w)} \frac{dz}{z}\Delta \left( p, w \, \sqrt{p t^3_c} \, z \right) \, \Phi(p,1/z), \label{eq:HadamardContour}
\end{align}
where the contour $\gamma(w)$ encloses the interval $[c_-(p,t_c) , c_+(p,t_c)]$ and crosses the positive real line at some point inside the interval $(c_+(p,t_c), c_+(p,t_c) + \varepsilon)$ for some $\varepsilon >0$. Note that the in the last display, the function $z \mapsto \Delta (p,w \sqrt{pt_c^3} z) /z$ is well defined and continuous at $z=0$. Using the fact that $\Delta (p,w \, \sqrt{p t^3_c} \, z)$ is analytic inside $\gamma(w)$ and deforming the contour gives, for every $w \in (0,1)$,
\begin{align*}
\Delta & \left( p,\sqrt{p t^3_c} \, z \right) \odot \Phi (p,z) \vert_{z=w}\\
&= \frac{1}{2i\pi}
\oint_{\gamma(w)} \frac{dz}{z} \Delta \left( p,w \,\sqrt{p t^3_c} \, z \right) \, \left( \frac{z^2}{2}
- \frac{z + \frac{c_+(p,t_c) + c_-(p,t_c)}{2}}{2} \sqrt{(z - c_+(p,t_c) ) (z - c_-(p,t_c) )} \right),\\
& = -\frac{1}{4 i \pi} \oint_{\gamma(w)} \frac{dz}{z} \Delta \left( p,w \, \sqrt{p t^3_c} \, z \right)) \left(z + \frac{c_+(p,t_c) + c_-(p,t_c)}{2} \right) \sqrt{(z-c_+(p,t_c))(z-c_-(p,t_c))},\\
& = \frac{1}{2  \pi} \int_{c_-(p,t_c)}^{c_+(p,t_c)} \frac{dz}{z} \Delta \left( p,w \, \sqrt{p t^3_c} \, z \right) \left(z + \frac{c_+(p,t_c) + c_-(p,t_c)}{2} \right) \sqrt{(c_+(p,t_c) - z)(z-c_-(p,t_c))}.
\end{align*}
Taking the limit as $w \to 1$ finaly gives the proposition
\end{proof}

\bigskip

We are now ready to prove our first main result:

\begin{proof}[Proof of Theorem \ref{th:offcrit}]

The proof basically consists on computing the integral \eqref{eq:ProbFiniteInt}, which still requires some work.
The change of variables $z = (1 - 1/y) (p t_c^3)^{-1/2}$ gives:
\begin{align*}
&\mathbb P_\infty^{\nu} (|\mathfrak C| < \infty)\\
& =
\frac{1}{2 \pi \, p \, t_c^3} \int_{\frac{y_-(p,t_c)}{y_-(p,t_c)-1}}^{\frac{y_+(p,t_c)}{y_+(p,t_c)-1}} \, \frac{dy}{y(y-1)} \,
\hat \Delta \left( p,V(1-p,U(t_c^3),y) \right) \,
%\frac{B(1-p ,y )}{[y^2] B(p ,y )} \,
\left(\frac{y -1}{y} + \frac{1}{2y_-(p,t_c)} + \frac{1}{2y_+(p,t_c)} \right) \\
& \qquad \qquad  \qquad \times
\sqrt{
\left( \frac{1}{y_+(p,t_c)} - \frac{y -1}{y} \right)
\left( \frac{y -1}{y}- \frac{1}{y_-(p,t_c)} \right)
},
\end{align*}
where $\hat \Delta$ is defined in Lemma \ref{lem:serDelta} and $V(1-p,U,y)$ is defined in Lemma \ref{lemm:parT}.
To calculate this new integral, we want to do the change of variables $y= \hat y(1-p,U(t_c^3),2\sqrt{3}/3 - V)$, where $\hat y$ is defined in Lemma \ref{lemm:parT} (as we will see shortly, this change of variables instead of simply $y= \hat y(1-p,U(t_c^3),V)$ simplifies calculations a bit more).

Recall that, from Lemma \ref{lemm:parT} and its proof, the function $V \mapsto \hat y (1-p,U(t_c^3),V)$ is an increasing bijection from $[V_-(1-p,U(t_c^3)), V_+(1-p,U(t_c^3))]$ onto $[y_-(1-p,t_c), y_+(1-p,t_c)]$, and is analytic on $(V_-(1-p,U(t_c^3)), V_+(1-p,U(t_c^3)))$. In view of this, for our change of variables, we want $2\sqrt{3}/3 - V$ to be in a sub-interval of $[V_-(1-p,U(t_c^3)), V_+(1-p,U(t_c^3))]$. This will also enable us to use the rational parametrization of the series $\Delta$ of Lemma \ref{lem:serDelta}. The details of the calculations that follow are available in the Maple file \cite{Maple}.

We have to solve for $V$ the two equations
\[
\hat y\left( 1-p, U(t_c^3),\frac{2\sqrt 3}{3} - V \right) = \frac{y_\pm(p,t_c)}{y_\pm(p,t_c) - 1}.
\]
There is an interesting symmetry to exploit: for every $p \in (0,1)$ and every $V \in \mathbb C$ one has
\[
\hat y \left( p,U(t_c^3),V \right) = \frac{\hat y(1-p,U(t_c^3),\frac{2\sqrt 3}{3} - V)}{ \hat y(1-p,U(t_c^3),\frac{2\sqrt 3}{3} - V) - 1},
\]
therefore, we want to solve for $V$ the two equations
\begin{equation} \label{eq:Vint}
\hat y\left( p, U(t_c^3),V \right) = y_\pm(p,t_c).
\end{equation}

Since $\hat y\left( p, U(t_c^3),V \right)$ is a rational fraction in $V$ of degree $2$ for its numerator and $3$ for its denominator, the solutions of equation \eqref{eq:Vint} are the roots of a polynomial of degree $3$. By definition, one of the root of this polynomial is $V_\pm(p,U(t_c^3))$, and it will even be a double root except for $V_-(p,U(t_c^3))$ when it is a negative pole of $\hat y$. Fortunately, we can compute explicitly these values. Indeed, the stationary points of $V \mapsto \hat y(p,U(t_c^3),V)$ are the roots of the polynomial
\[
\left(9 V^3 - 9V^2 \sqrt 3 + 4p \sqrt 3 \right) \, \left(V - \sqrt 3 /3 \right).
\]
The roots of the polynomial of degree $3$ are given by
\begin{align*}
V_m(p) & = - \frac{\sqrt 3}{3} \, \frac{\sqrt p}{\cos \left( \frac{\mathrm{arccos} (\sqrt{p})}{3}\right)} <0,\\
V_l(p) & = \frac{\sqrt 3}{3} \, \frac{\sqrt p}{\cos \left( \frac{\mathrm{arccos} (\sqrt{p})}{3} - \frac{\pi}{3}\right)} \in [0,2\sqrt{3}/3],\\
V_r(p) & = \frac{2\sqrt 3}{3} - V_m(p) \geq 2\sqrt{3}/3.
\end{align*}
From there, we see that $V_-(p,U(t_c^3)) = V_m(p)$ when $\hat y$ has no negative pole, which is the case when $p > \frac{1}{2} - \frac{5 \sqrt{3}}{18} \sim 0.018$. Since we are interested in $p \geq 1/2$, the case when $\hat y$ has negative poles will not bother us. The positive singularity is given by
\[
V_+(p,U(t_c^3)) =
\begin{cases}
\frac{\sqrt{3}}{3} & \text{when $p \geq 1/2$},\\
V_l(p) & \text{when $p \leq 1/2$}.
\end{cases}
\]

We can compute the third root of equation \eqref{eq:Vint} from the constant term of the polynomial, which is always  $-\frac{2\sqrt{3} p}{9}$. When $p>\frac{1}{2} - \frac{5 \sqrt{3}}{18}$, this root is given by
\[V^i_\pm (p) = \frac{2\sqrt{3} p}{9 V_\pm(p,U(t_c^3))^2}.\]
Since $V_\pm(p,U(t_c^3)) \leq \sqrt{3}/3$, we can see that $2 \sqrt{3}/3 - V_\pm(p,U(t_c^3))$ is always larger than $\sqrt{3}/3$, and therefore $V_+(1-p,U(t_c^3))$. From the explicit expression of $V_\pm(p,U(t_c^3))$, we can also check that $2\sqrt{3}/3 - V_\pm^i(p)$ is always between $0$ and $V_+(1-p,U(t_c^3))$. The correct bounds for our change of variables are then $V^i_\pm (p)$ since they are the only solutions inside the interval $[V_-(1-p,U(t_c^3)),V_+(1-p,U(t_c^3))]$. Note that we have $0<V_+^i(p) < V_-^i(p)$. This discussion also provides the following nice factorizations:
\begin{align*}
 \frac{\hat y (1-p, U(t_c^3), \frac{2 \sqrt 3}{2} -V)) -1}{\hat y (1-p, U(t_c^3), \frac{2 \sqrt 3}{2} -V))}- \frac{1}{y_\pm(p,t_c)}
&=
\frac{1}{\hat y (p, U(t_c^3), V))}- \frac{1}{y_\pm(p,t_c)}, \\
&=  \frac{\left(V - V_\pm^i(p) \right) \left(V - V_\pm(p,U(t_c^3)) \right)^2}{2 \, V \left(V-\frac{2\sqrt{3}}{3} \right)}.
\end{align*}
With the explicit expression for $\hat \Delta$ in terms of $V$ given in Lemma \ref{lem:serDelta}, it is easy to check:
\[
\hat \Delta \left( p,V \right) \,
%\frac{B(1-p ,y(1-p,V) )}{[y^2] B(p ,y )} \, 
\frac{\partial_V \hat y(1-p,U(t_c^3),V)}{\hat y(1-p,U(t_c^3),V) \, (\hat y(1-p,U(t_c^3),V) - 1)} = \frac{1}{3 \, (2\sqrt{3} -3(1-p)) \left( V - \frac{\sqrt{3}}{3} \right)^2}.
\]
Using the change of variables $y = \hat y(1-p,U(t_c^3),2\sqrt{3}/3 - V)$ we finally get:
\begin{align*}
&\mathbb P_\infty^{\nu} (|\mathfrak C| < \infty) \\
& =
\frac{1}{12 \,p \, t_c^3 \, (2\sqrt{3} -3(1-p)) \, \pi}
%\frac{\sqrt{3}}{(1-p)(2\sqrt{3} - 3p) \, \pi} 
\int_{V_+^i(p)}^{V_-^i(p)} dV \,\sqrt{(V_-^i(p)-V)(V-V_+^i(p))}\\
& \qquad \qquad \qquad \qquad \qquad \qquad \qquad \qquad \qquad  \times \frac{(V-V_+(p,U(t_c^3)))(V-V_-(p,U(t_c^3)))}{V \, \left(\frac{2\sqrt{3}}{3} -V\right) \, \left( V - \frac{\sqrt{3}}{3} \right)^2}\\
& \qquad \qquad \qquad \qquad \qquad \qquad \qquad \qquad \qquad  \times \left( \frac{1}{\hat y(p,U(t_c^3),V)} + \frac{1}{2} \left(\frac{1}{y_+(p,t_c)} + \frac{1}{y_-(p,t_c)} \right) \right).
\end{align*}

Embarrassingly, we were not able to simplify directly this integral (it is equal to $1$ when $p<1/2$ since we know that the critical percolation threshold is $1/2$ !). Fortunately, when $p > 1/2$, Maple is able to give an expression in terms of $V_\pm(p,U(t_c^3))$.  Using basic trigonometric identities, we can further simplify this expression into the one given in the theorem after injecting the expressions for $V_\pm(p,U(t_c^3))$.
% To simplify computations, we will use the following factorization:
% \[
% \frac{1}{y(p,V)} - \frac{1}{y(p,\sqrt{3}/3)} = \frac{1}{2} \frac{\left( V - \frac{2p\sqrt{3}}{3} \right)\, \left( V - \frac{\sqrt{3}}{3} \right)^2}{V \, \left(V-\frac{2\sqrt{3}}{3} \right)}.
% \]
% This gives:
% \begin{align*}
% \mathbb P_\infty^{\nu} & (|\mathfrak C| < \infty) \\
% & =
% \frac{\sqrt{3}}{2(1-p)(2\sqrt{3} - 3p) \, \pi}
% \Bigg[
% \int_{V_-^i(p)}^{V_+^i(p)} dV \, \sqrt{(V_+^i(p)-V)(V-V_-^i(p))}\, \frac{(V-V_+(p))(V-V_-(p))(V-2p\sqrt{3}/3)}{V^2 \, \left(V-\frac{2\sqrt{3}}{3} \right)^2}\\
% & +
% \left( \frac{2}{y(p,\sqrt{3}/3)} + \frac{1}{y_+(p)} + \frac{1}{y_-(p)} \right)
% \int_{V_+^i(p)}^{V_-^i(p)} dV \, \sqrt{(V_-^i(p)-V)(V-V_+^i(p))}\, \frac{(V-V_+(p))(V-V_-(p))}{V \, \left(V-\frac{2\sqrt{3}}{3} \right) \, \left( V - \frac{\sqrt{3}}{3} \right)^2}
% \Bigg]
% \end{align*}
\end{proof}

\section{Cluster volume and perimeter} 

\subsection{Admissibility equations and volume-modified weight sequence} \label{sec:criteq}

We review in this Section additional background on Boltzmann planar maps that we will use in the proof of Theorem \ref{th:volume}. We refer the reader to the references \cite{BCM,BuddInfBoltz,MiermontInvariance} for details.

\bigskip

For $p \in (0,1)$ and $t \in (0,t_c]$, consider the two following bivariate power series in $(z_1,z_2)$:
\begin{align*}
f^\bullet (p,t;z_1,z_2) &= \sum_{k,k' \geq 0} z_1^k z_2^{k'} \binom{2k+k' +1}{k+1,k,k'} q_{2+2k+k'} (p,t), \\
f^\diamond (p,t;z_1,z_2) &= \sum_{k,k' \geq 0} z_1^k z_2^{k'} \binom{2k+k'}{k,k,k'} q_{1+2k+k'} (p,t).
\end{align*}
These two functions are linked with Boltzmann maps with weight sequence $(q_k(p,t_c))_{k\geq 1}$ by the Bouttier-Di Francesco-Guitter bijection \cite{BDFG} and will be instrumental in the remaining of this work. In our case, we can compute alternative expressions for these functions that will be more amenable to analysis, see Lemma \ref{lem:BDFGintegral}.

Recall the parameters $c_+(p,t)$ and $c_-(p,t)$ defined in Equation \eqref{eq:cpm}. Since the weight sequence $\mathbf q (p,t)$ is admissible, the two functions defined above are well defined at least in the domain $|z_1| \leq z^+(p,t)$ and $|z_2| \leq z^\diamond (p,t)$ where $z^+(p,t)$ and $z^\diamond (p,t)$ are positive real numbers defined by
\begin{equation} \label{eq:zpd}
c_\pm(p,t) = z^\diamond(p,t) \pm 2 \sqrt{z^+(p,t)}.
\end{equation}
Furthermore, from Proposition 4.2 and Lemma 4.4 of \cite{BCM}, $(z^+(p,t),z^\diamond(p,t))$ is the minimal solution of the system of equations
\begin{equation} \label{eq:system}
\begin{cases}
f^\bullet(p,t;z^+(p,t),z^\diamond(p,t)) &= 1 - \frac{1}{z^+(p,t))},\\
f^\diamond (p,t;z^+(p,t),z^\diamond(p,t)) &= z^\diamond(p,t).
\end{cases}
\end{equation}
In addition, when $p=1/2$ and $t=t_c$, the weight sequence is critical (see Theorem 1.1 of \cite{BCM}), and $(z^+(1/2,t_c),z^\diamond(1/2,t_c))$ is the unique solution of the the system of equations \eqref{eq:system} such that
\begin{equation}\label{eq:critdiff}
\left( \partial_{z_2} + \sqrt{z_1} \partial_{z_1} \right) f^\diamond (p,t;z^+(p,t),z^\diamond(p,t)) = 1.
\end{equation}

\bigskip

For $g \in (0,1]$, let $(z^+(p,t;g) , z^\diamond (p,t;g) )$ be the unique solution in $(0,z^+(p,t)] \times (0,z^\diamond (p,t)]$ of the system of equations
\begin{equation} \label{eq:systemg}
\begin{cases}
f^\bullet(p,t;z^+(p,t;g),z^\diamond(p,t;g)) &= 1 - \frac{g}{z^+(p,t;g))},\\
f^\diamond (p,t;z^+(p,t;g),z^\diamond(p,t;g)) &= z^\diamond(p,t;g).
\end{cases}
\end{equation}
Define
\[
c_\pm(p,t;g) = \frac{1}{g} \left(z^\diamond (p,t;g) \pm 2  \sqrt{z^+ (p,t;g)} \right).
\]
From e.g. \cite[Equations (4.3) and (4.4)]{BCM}, the pointed disk generating function of Boltzmann maps with modified weight sequence $\mathbf q (p,t;g) := \left( g^{(k -2)/2} \, q_{k} (p ,t) \right)_{k \geq 1}$ defined as in \eqref{eq:Wbulletdef} is given by
\begin{equation}
W_{\mathbf q (p,t;g)}^\bullet (z) = \frac{1}{\sqrt{ (z- c_+(p,t;g)) (z-c_-(p,t;g) - z)}}.
\end{equation}

\subsection{Proof of Theorem \ref{th:volume}} \label{sec:vol}

We start with the perimeter exponent as it is the easiest of the two. From Equations \eqref{eq:MapMass} and \eqref{eq:Wbulletdef}, we can write
\[
\mathbb P_n ^p \left( |V(\partial \mathfrak C ) | = k \right) = \frac{[t^{3n}] \left(q_k(p,t) \cdot [z^{-(k+1)}] W_{\mathbf q (p,t)}(z) \right)}{[t^{3n}] \mathcal Z (p,t)}
= \frac{1}{p} \frac{[t^{3n}] \left(\sqrt{pt^3}^{-k} q_k(p,t) \cdot [y^k] T(p,t,ty) \right)}{[t^{3n}] \mathcal Z (p,t)}.
\]
Applying Lemmas \ref{lem:serDelta} and \ref{lem:Tkasymp} directly give the limit, which is the probability that $|V(\partial \mathfrak C ) | = k$ in the UIPT by continuity for the local topology. 
\[
\mathbb P_\infty ^p \left( |V(\partial \mathfrak C ) | = k \right) 
= \frac{1}{p} 
\left(
\delta_k(p) \, t_c^k T_k(p,t_c) + \sqrt{pt_c^3}^{-k} q_k(p,t_c) \, \theta_k(p)
\right).
\]
Using the asymptotics \eqref{eq:Ttcritexp}, \eqref{eq:qkcritexp}, \eqref{eq:deltakexp}  and \eqref{eq:thetakexp} derived in Section \ref{sec:expansionscrit} gives that for $p=1/2$,
\[
\mathbb P_\infty ^{1/2} \left( |V(\partial \mathfrak C ) | = k \right) 
\underset{k \to \infty}{\sim}
\kappa' \, k^{-4/3} 
\]
with
\begin{equation} \label{eq:kappaprime}
\kappa' = - 8 \, \frac{1}{\Gamma(4/3)} \left( \frac{8 \,3^{\frac{5}{6}}}{351}+\frac{2 \,3^{\frac{1}{3}}}{117} \right) \, \frac{3^{5/6}}{2 \Gamma(-2/3)} \simeq
0.454,
\end{equation}
proving the second statement of Theorem \ref{th:volume}.

\bigskip

We now turn on the first statement of the Theorem on the volume of the root cluster.
Although we are only interested in the case $p=1/2$, we start the proof with a generic $p \in (0,1)$ as it will be easier to follow. From proposition \ref{prop:probaCm} we have for every $g \leq 1$
\[
\mathbb E_\infty^p \left[ g^{|V(\mathfrak C)|} \right]
=
\sum_{\mathfrak m \in \mathcal M} \, g^{|V(\mathfrak m)|} \,
\sum_{f \in F(\mathfrak m )} (p t^3_c)^{\mathrm{deg}(f)/2} \delta_{\mathrm{deg}(f)} (p) \, \prod_{f' \in F(\mathfrak m ) \setminus \{f\}} q_{\mathrm{deg}(f')} (p ,t_c).
\]
Applying Euler's formula then gives
\[
\mathbb E_\infty^p \left[ g^{|V(\mathfrak C)|} \right]
= g \,
\sum_{\mathfrak m \in \mathcal M}  \,
\sum_{f \in F(\mathfrak m )} (g \, p t^3_c)^{\mathrm{deg}(f)/2} \delta_{\mathrm{deg}(f)} (p) \, \prod_{f' \in F(\mathfrak m ) \setminus \{f\}} g^{(\mathrm{deg}(f') -2)/2} \, q_{\mathrm{deg}(f')} (p ,t_c).
\]
Using the exact same line of reasoning as in the proof of Theorem \ref{th:offcrit}, we can compute this sum as the Hadamard product evaluated at $z=1$ of the functions $z  \mapsto \Delta(p, \sqrt{g p t_c^3})$ and the function $z \mapsto \Phi (p,z)$ where $c_+(p,t_c)$ and $c_-(p,t_c)$ are replaced by the two constants $c_+(p,t_c;g)$ and $c_-(p,t_c;g)$ associated to the pointed disk generating function $\left( (z- c_+(p,t_c;g)) (z-c_-(p,t_c;g) - z) \right)^{-1/2}$ of Boltzmann maps with weight sequence $\mathbf q (p,t_c;g) = \left( g^{(k -2)/2} \, q_{k} (p ,t_c) \right)_{k \geq 1}$ introduced in Section \ref{sec:criteq}. Indeed, the first function should be obvious, and the second comes from the antiderivative of cylinder generating functions associated to the weight sequence $\mathbf q (p,t_c;g)$ instead of the weight sequence $\mathbf q (p,t_c;1)$, which has the same universal form. Of course, it remains to calculate these two constants, which are less explicit than their counterparts $c_+(p,t_c)$ and $c_-(p,t_c)$. Nevertheless, after mimicking the part of the proof of Theorem \ref{th:offcrit} leading to \eqref{eq:ProbFiniteInt}, we arrive at the formula
\begin{align}
\mathbb E_\infty^p \left[ g^{|V(\mathfrak C)|} \right]
= 
\frac{g}{2  \pi} \int_{c_-(p,t_c;g)}^{c_+(p,t_c;g)} \frac{dz}{z} & \Delta \left( p, \sqrt{g \, p t^3_c} \, z \right) \left(z + \frac{c_+(p,t_c;g) + c_-(p,t_c;g)}{2} \right) \notag \\
& \times \sqrt{(c_+(p,t_c;g) - z)(z-c_-(p,t_c;g))}. \label{eq:Volgint}
\end{align}

\bigskip

We want to study the asymptotic behavior of the integral \eqref{eq:Volgint} when $g \to 1^-$. To do so, we must first study the dependency in $g$ of the two constants $c_\pm(p,t_c;g)$. We do not have a simple formula for the unpointed or pointed disk generating function of the Boltzmann maps as was the previously the case. However, the two constants can be studied via the solution the system of equations \eqref{eq:systemg}. Let us denote $z^+(g) =z^+(1/2,t_c;g)$ and $z^\diamond (g) =z^\diamond (1/2,t_c;g)$ the solution of this system. We can calculate an expansion as $g \to 1^-$ of these two quantities with Lemma \ref{lem:BDFGexpansion}. Indeed, the development of $f^\diamond$ gives
\begin{align*}
z^+ - z^+(g) &=  \sqrt{z^+} \, \left( z^\diamond - z^\diamond(g) \right)
+ \frac{\kappa^\diamond}{\partial_{z_1} f^\diamond (z^+,z^\diamond)} \,
\left( (z^+ - z^+(g)) + \sqrt{z^+} (z^\diamond - z^\diamond(g)) \right)^{7/6} \\
& \qquad +  o \left( \left((z^+ - z^+(g)) + \sqrt{z^+} (z^\diamond - z^\diamond(g)) \right)^{7/6} \right).
\end{align*}
Therefore we have
\begin{align} \label{eq:zdinter}
z^\diamond - z^\diamond(g) &=  \frac{1}{\sqrt{z^+}} \, \left( z^+ - z^+(g) \right) \notag \\
& \qquad - \frac{\kappa^\diamond}{\sqrt{z^+} \, \partial_{z_1} f^\diamond (z^+,z^\diamond)} \,
\left(2(z^+ - z^+(g)) \right)^{7/6} +  o \left( \left( z^+ - z^+(g) \right)^{7/6} \right).
\end{align}
Now the development of $f^\bullet$ gives
\begin{align*}
\frac{1}{z^+} - \frac{g}{z^+(g)} &= - \frac{1}{(z^+)^2} \left( z^+ - z^+(g) \right) \\
& \qquad + \left( \frac{\kappa^\diamond}{\sqrt{z^+}} + \kappa^\bullet \right) \left(2(z^+ - z^+(g)) \right)^{7/6} + o \left( \left(2(z^+ - z^+(g)) \right)^{7/6} \right).
\end{align*}
Expanding the left hand side of this identity then gives
\begin{align*}
\frac{1-g}{z^+} + \frac{1-g}{(z^+)^2}  \left( z^+ - z^+(g) \right) =  \left( \frac{\kappa^\diamond}{\sqrt{z^+}} + \kappa^\bullet \right) \left(2(z^+ - z^+(g)) \right)^{7/6} + o \left( \left(2(z^+ - z^+(g)) \right)^{7/6} \right),
\end{align*}
which in turns gives
\begin{align*}
z^+(g) = z^+ - \frac{1}{2} \, \left(\kappa^\diamond \sqrt{z^+} + \kappa^\bullet z^+ \right)^{-6/7} \left( 1- g \right)^{6/7}
+ o \left(  \left( 1- g \right)^{6/7} \right).
\end{align*}
Plugging this expression in equation \eqref{eq:zdinter} gives
\begin{align*}
z^\diamond(g) = z^\diamond - \frac{1}{2 \sqrt{z^+}} \, \left(\kappa^\diamond \sqrt{z^+} + \kappa^\bullet z^+ \right)^{-6/7} \left( 1- g \right)^{6/7}
+ o \left(  \left( 1- g \right)^{6/7} \right).
\end{align*}
We finally get an expansion for $c_\pm(g)$:
\begin{align*}
c_+(g) & = z^\diamond(g) + 2  \sqrt{z^+(g)} = c_+(1) - \frac{1}{\sqrt{z^+}} \, \left(\kappa^\diamond \sqrt{z^+} + \kappa^\bullet z^+ \right)^{-6/7} \left( 1- g \right)^{6/7}
+ o \left(  \left( 1- g \right)^{6/7} \right),\\
c_-(g) & = z^\diamond(g) - 2  \sqrt{z^+(g)} = c_-(1) + o \left(  \left( 1- g \right)^{6/7} \right).
\end{align*}

\bigskip

The change of variable $z = \phi (g,\xi ) := c_-(g) + \left( c_+(g) - c_-(g) \right) \xi$ in Equation \eqref{eq:Volgint} gives
\begin{align*}
\mathbb E_\infty^p \left[ g^{|V(\mathfrak C)|} \right]
&= 
\frac{g (c_+(g) - c_-(g))^2}{2  \pi} \int_{0}^{1} d\xi \sqrt{\xi (1-\xi)} \frac{\Delta \left( 1/2, \sqrt{g \, t^3_c/2} \, \phi(g,\xi) \right)}{\phi(g,\xi)} \left(\phi(g,\xi) + \frac{c_+(g) + c_-(g)}{2} \right), \\
& =
\frac{(c_+(1) - c_-(1))^2}{2  \pi} \int_{0}^{1} d\xi \sqrt{\xi (1-\xi)} \frac{\Delta \left( 1/2, \sqrt{g \, t^3_c/2} \, \phi(g,\xi) \right)}{\sqrt{g} \, \phi(g,\xi)} \left(\phi(1,\xi) + \frac{c_+(1) + c_-(1)}{2} \right) \\
& \qquad \qquad + \mathcal O \left( (1-g)^{6/7} \right)
\end{align*}
where we used the developments of $c_\pm(g)$ to obtain the second equality. In a similar fashion than in the proof of Lemma \ref{lem:BDFGexpansion}, the dominant singular term of the integral comes for the singularity at $g=1$ and $\xi =1$ of the term $\frac{\Delta \left( 1/2, \sqrt{g \, t^3_c/2} \, \phi(g,\xi) \right)}{\sqrt{g} \, \phi(g,\xi)}$. The asymptotic expansion \eqref{eq:deltacritexp} established in Section \ref{sec:expansionscrit} gives
\[
\frac{\Delta \left( 1/2 ,  z \right)}{z} \underset{z \to 1/2}{\sim} \left( \frac{16 \,3^{\frac{5}{6}}}{351}+\frac{4 \,3^{\frac{1}{3}}}{117} \right) \, \left( 1 - 2 z \right)^{-4/3}.
\]
Applying the same techniques as in the proof of Lemma \ref{lem:BDFGexpansion}, we see that the main singular term of the expansion of $\mathbb E_\infty^p \left[ g^{|V(\mathfrak C)|} \right]$ comes from the integral
\begin{align*}
\frac{\tilde \kappa}{\pi} \int_{0}^{1} d\xi \sqrt{\xi (1-\xi)} & \left( 1-2 \sqrt{g \, t^3_c/2} \, \phi(g,\xi) \right)^{-4/3} \\
&= \frac{\tilde \kappa}{8 \left(1 - 2 \sqrt{g \, t^3_c/2} c_-(g) \right)^{4/3}} \, _2F_1 \left(\frac{4}{3}, \frac{3}{2} ; 3 ; \frac{2 \sqrt{g \, t^3_c/2} \left( c_+(g) - c_-(g)\right)}{1 - 2 \sqrt{g \, t^3_c/2} c_-(g)} \right) ,
\end{align*}
with
\[
\tilde \kappa = \frac{(c_+(1) - c_-(1))^2  \left(\phi(1,1) + \frac{c_+(1) + c_-(1)}{2} \right) \sqrt{t_c^3/2}}{2} \left( \frac{4 \,3^{\frac{5}{6}}}{351}+\frac{3^{\frac{1}{3}}}{117} \right).
\]
Using the singular expansion of the hypergeometric function at $1$ and the developments of $c_\pm(g)$, we finally get
\begin{align*}
\mathbb E_\infty^p \left[ g^{|V(\mathfrak C)|} \right]
= 1 - &\frac{\tilde \kappa}{8 \left(1 - 2 \sqrt{\, t^3_c/2} c_-(1) \right)^{4/3}} \,
\frac{36 \sqrt{3} \,  \Gamma \! \left(\frac{5}{6}\right) \, \Gamma \! \left(\frac{2}{3}\right) }{\pi^{\frac{3}{2}}} \\
& \qquad \left( \frac{2 \sqrt{t_c^3/2}}{1 - 2 \sqrt{t_c^3/2} c_-(1)} 
\frac{1}{\sqrt{z^+}} \, \left(\kappa^\diamond \sqrt{z^+} + \kappa^\bullet z^+ \right)^{-6/7} \left( 1- g \right)^{6/7}
\right)^{1/6}
+ o \left( 1- g \right)^{1/7}.
\end{align*}

\bigskip

This expansion is unfortunately not enough to extract the asymptotic behavior of the probabilities $\mathbb P _\infty^{1/2} \left( | V(\mathfrak C) | = n\right) $ as $n \to \infty$. Indeed, the generating series of these probabilities is $\mathbb E_\infty^p \left[ g^{|V(\mathfrak C)|} \right]$, but this function could have singularities of modulus $1$ other than $1$ contributing to the asymptotic. However, we do not have this problem for tail probabilities. For every $n$, denote $p_n = \mathbb P _\infty^{1/2} \left( | V(\mathfrak C) | \geq n\right) $. A simple computation gives
\[
\sum_{n \geq 0} p_n g^n = \frac{1-g \, \mathbb E_\infty^p \left[ g^{|V(\mathfrak C)|} \right] }{1-g} \underset{g \to 1^-}{\sim}
 \kappa_1 \, \left( 1- g \right)^{-6/7},
\]
with
\[
\kappa_1 = \frac{\tilde \kappa}{8 \left(1 - 2 \sqrt{\, t^3_c/2} c_-(1) \right)^{4/3}} \,
\frac{36 \sqrt{3} \,  \Gamma \! \left(\frac{5}{6}\right) \, \Gamma \! \left(\frac{2}{3}\right) }{\pi^{\frac{3}{2}}}
\left( \frac{2 \sqrt{t_c^3/2}}{1 - 2 \sqrt{t_c^3/2} c_-(1)} 
\frac{1}{\sqrt{z^+}} \right)^{1/6}
\, \left(\kappa^\diamond \sqrt{z^+} + \kappa^\bullet z^+ \right)^{-8/7}.
\]
From there, a classical Tauberian Theorem (see e.g. Theorem VI.13 of \cite{FS} and the following discussion) establishes the asymptotic behavior of $p_n$ given in the Theorem with 
\begin{equation} \label{eq:kappa}
\kappa = \frac{\kappa_1}{ \Gamma(8/7)}
= \frac{63 \left(3^{\frac{1}{3}}+\frac{4 \,3^{\frac{5}{6}}}{3}\right) 3^{\frac{17}{21}} 7^{\frac{1}{7}} 137^{\frac{6}{7}} \Gamma \! \left(\frac{2}{3}\right)^{\frac{18}{7}} 2^{\frac{3}{14}} 5^{\frac{13}{14}}}{56992 \pi^{\frac{12}{7}} \Gamma \! \left(\frac{1}{7}\right)}
\simeq 0.278.
\end{equation}

\section{Technical Lemmas} 

\subsection{Dependency in \texorpdfstring{$t$}{t} of the weights} \label{sec:teclem}

\begin{lemm} \label{lem:serDelta}
Fix $p \in (0,1)$ and let $V_c^z$ be the power series in $z$ defined be $V_c^z = V(1-p,U(t_c^3),1/(1-z))$, where $V$ is defined in Lemma \ref{lemm:parT}. Let $\Delta(p,z)$ be the power series in $z$ defined by:
\begin{align}{}
&\Delta(p,z) \notag\\
&=  \hat \Delta (p,V_c^z) :=
3 \,
\frac
{V_c^z \, (2 \sqrt 3 -3V_c^z) \, \left(  9 (V_c^z)^3  - 9 (\sqrt 3 + 1) (V_c^z)^2 + 3(3 + 2\sqrt 3) V_c^z -  2 (1-p) \sqrt 3 \right)}
{(3(p - 1) + 2 \sqrt 3)\, \left( \sqrt 3 -3V_c^z \right)^3 \, \left(9 (V_c^z)^3 - 9 (V_c^z)^2 \sqrt 3 + 4 (1-p) \sqrt 3 \right)}.
\end{align}
For every $k \geq 1$ one has
\begin{equation} \label{eq:asymptot3nqk}
\frac{[t^{3n}] q_{k} (p ,t) \sqrt{p t^3}^{-k}}{[t^{3n}] \mathcal Z ( p ,t)}
\underset{n \to \infty}{\rightarrow} \delta_k(p)
\end{equation}
where the generating series of the numbers $\delta_k(p)$ is given by $\Delta(p,z)$, which is analytic in the domain $\mathbb C \setminus \left[ 1 - 1/y_+(p,t_c) , + \infty \right)$.
\end{lemm}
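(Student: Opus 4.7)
The plan is to compute separately the asymptotic behavior of numerator and denominator in \eqref{eq:asymptot3nqk} by analysing their singularities at $t^3=t_c^3$. Starting from \eqref{eq:defqkt} and the elementary identity $\sum_{k\geq 1}\binom{k+l-1}{l}z^k=z(1-z)^{-l-1}$, I would assemble the bivariate generating function
\[
\sum_{k\geq 1}\sqrt{pt^3}^{-k}\,q_k(p,t)\,z^k \;=\; \frac{z^3}{p\,t^3}\;+\;\frac{z}{p(1-z)}\,T\!\left(1-p,t,\tfrac{t}{1-z}\right).
\]
The first summand, a Laurent monomial in $t$, contributes nothing to $[t^{3n}]$ for $n\geq 1$. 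Substituting the rational parametrizations of Lemmas~\ref{lemm:parU} and~\ref{lemm:parT} turns the second summand into a rational function of $U(u)$ and $V_c^z=V(1-p,U(u),1/(1-z))$ where $u=t^3$; call this function $F(u,z)$.

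The heart of the proof is the singular expansion of $F(\cdot,z)$ at $u=t_c^3$. Since $\hat w'(U_c)=0$, the series $U(t^3)$ has the square-root singularity recorded in \eqref{eq:singU}, and any smooth function of $U$ inherits the corresponding fractional powers of $1-u/t_c^3$. The critical and, I expect, main difficulty is to show that the coefficient of $(1-u/t_c^3)^{1/2}$ in $F(u,z)$ vanishes identically in $z$; otherwise the ratio in \eqref{eq:asymptot3nqk} would diverge like $n$, since $[t^{3n}]\mathcal Z(p,t)$ has order $n^{-5/2}$ by Proposition~\ref{prop:Ztcoef}. Equivalently, the $U$-derivative at $U=U_c$ (for fixed $y$) of the map $U\mapsto \hat T(1-p,U,V(1-p,U,y))$ must vanish. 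Via the implicit relation $\partial_U V|_y=-\partial_U\hat y/\partial_V\hat y$, this reduces to the vanishing of the Jacobian $\partial_U\hat T\cdot \partial_V\hat y-\partial_V\hat T\cdot \partial_U\hat y$ at $U=U_c$, which I would verify by direct expansion of the rational forms of $\hat T$ and $\hat y$ given in Lemma~\ref{lemm:parT}: the resulting polynomial identity in $V$ holds precisely because $1-6U_c+6U_c^2=0$, i.e., $\hat w'(U_c)=0$. This is the same mechanism responsible for the absence of the $(1-u)^{1/2}$ term in $\mathcal Z$; the cancellation $(tT_1)'(U_c)=0$ used implicitly in the proof of Proposition~\ref{prop:Ztcoef} is the $l=1$ instance of the same phenomenon.

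Once the square-root term is eliminated, the leading singular behavior of $F$ is $F_{3/2}(z)(1-u/t_c^3)^{3/2}$, where $F_{3/2}(z)$ is a rational function of $V_c^z$ obtained from the full expansion~\eqref{eq:singU} of $U$ (a tedious but mechanical Maple computation). The transfer theorem \cite[Theorem~VI.4]{FS} then gives
\[
[t^{3n}]\sqrt{pt^3}^{-k}\,q_k(p,t) \underset{n\to\infty}{\sim} \frac{[z^k]F_{3/2}(z)}{\Gamma(-3/2)}\,t_c^{-3n}\,n^{-5/2},
\]
and dividing by the asymptotics of Proposition~\ref{prop:Ztcoef} produces the convergence in~\eqref{eq:asymptot3nqk} together with an explicit generating function $\Delta(p,z)$ for the numbers $\delta_k(p)$. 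Its algebraic simplification to the closed form $\Delta(p,z)=\hat\Delta(p,V_c^z)$ stated in the lemma is lengthy but routine, and I would delegate it to the Maple companion file~\cite{Maple}. Finally, the analyticity claim follows from Lemma~\ref{lemm:parT}: $V_c^z$ inherits its cut from the singularity of $V(1-p,U_c,\cdot)$ in $y$ via the substitution $y=1/(1-z)$, and I would check in Maple that no additional singularity of $\hat\Delta(p,V_c^z)$ enters the stated domain.
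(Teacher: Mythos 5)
Your proposal is correct in substance but takes a genuinely different route from the paper at the key step. The paper never computes the $(1-t^3/t_c^3)^{1/2}$ coefficient directly: it shows that each $\tilde q_k(p,t)$ is algebraic with unique dominant singularity at $t_c^3$ (by locating the zeros of the discriminant of the defining equation), and then \emph{deduces} the $n^{-5/2}$ scale, hence the vanishing of the square-root term, from a two-sided combinatorial sandwich
\[
c\,[t^{3n}]\mathcal Z(p,t)\ \leq\ [t^{3n}]\tilde q_k(p,t)\ \leq\ c'\,[t^{3n}]\mathcal Z(p,t),
\]
built from explicit triangulation constructions. You instead propose to verify the cancellation directly by showing that the Jacobian
$\partial_U\hat T\cdot\partial_V\hat y-\partial_V\hat T\cdot\partial_U\hat y$ vanishes at $U=U_c$ identically in $V$. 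This does check out: writing $\hat T=D/Q$ and $\hat y=N_y/D$ with $Q(U)=4U(1-U)(1-2U)$, one has $Q'(U_c)=0$, the cross terms in the Jacobian cancel, and what remains is $\big(\partial_U D\,\partial_V N_y-\partial_V D\,\partial_U N_y\big)/(QD)$; expanding in $V$, every coefficient is a multiple of $1-6U_c+6U_c^2=0$. Your remark that the cancellation $(tT_1)'(U_c)=0$ underlying Proposition~\ref{prop:Ztcoef} is the same phenomenon is also accurate. The trade-off: your algebraic argument is shorter and explains \emph{why} the square-root term vanishes, while the paper's combinatorial sandwich also yields strict positivity $\delta_k(p)>0$ for free (not needed in this lemma, but morally useful) and bypasses checking a polynomial identity.

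One point you should not skip: before invoking the transfer theorem you must establish that $t_c^3$ is the \emph{unique} dominant singularity of each $[z^k]F(\cdot,z)$ and that the function extends to a $\Delta$-domain. The paper derives this from the algebraic equation satisfied by $\tilde F$, whose discriminant can vanish only at singularities of $T(1-p,t,t)$, hence only at $t_c^3$. In your framework the analogous check is that the composite $U\mapsto \hat T(1-p,U,V(1-p,U,y))$ is analytic on a neighborhood of the image $U([0,t_c^3])$ for the relevant range of $y$, i.e.\ that no pole of $\hat T$ or $\hat y$ and no zero of $\partial_V\hat y$ is hit before $U_c$. This is implicit in Lemma~\ref{lemm:parT} but should be said. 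Similarly, passing from pointwise singular expansions of $\tilde F(p,t,z)$ for fixed $z$ to the identification of $\hat\Delta(p,V_c^z)$ as the generating series of the $\delta_k$ requires the limit-exchange argument the paper spells out (the two displayed limits defining $\tilde A$ and $\tilde B$); ``delegate to Maple'' is too quick there, since the issue is uniformity in $z$, not algebra.
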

\begin{proof}
In the whole proof, $p\in (0,1)$ is fixed. All calculations are available in the Maple companion file \cite{Maple}. We start by proving that, for every $k \geq 1$, the series
\[\tilde q_k(p,t) = \sqrt{p t^3}^{-k} \cdot \left(q_k(p,t) - (p t)^{3/2} \mathbf 1_{k=3}\right)\]
seen as a series in $t^3$ is algebraic and has a unique dominant singularity at $t_c^3$. In view of \eqref{eq:genF}, the generating series of these modified weights is given by
\begin{equation} \label{eq:tildeF}
\tilde F(p,t,z) = \sum_{k\geq 1} \tilde q_k(p,t) \, z^{k-1} = \frac{1}{p} \, \frac{1}{1-z} T \left(1-p,t,\frac{t}{1-z}\right).
\end{equation}
Injecting this into the algebraic equation verified by $T$, we get an algebraic equation verified by $\tilde F$:
\[
\left(p \, \tilde F(p,t,z) - T(1-p,t,t) \right) \, \mathrm{Pol}_1 (p \, \tilde F(p,t,z),p,t^3,T(1-p,t,t),tT_1(p,t)) = z \cdot \mathrm{Pol}_2 (p \, \tilde F(p,t,z),p,z),
\]
where $\mathrm{Pol}_1$ and $\mathrm{Pol}_2$ are explicit polynomials. The form of this equation has the following consequences. First, using Lemma \ref{lemm:parT}, the series $\tilde F(p,t,z)$ is algebraic. Second, it is the unique solution of this equation with constant term in $z$ equal to $T(1-p,t,t)/p$, and we can compute its coefficients in $z$ inductively. These coefficients are the modified weights $\tilde q_k(p,t)$ and their expressions are then rational fractions in $p$, $t^3$, $tT_1(p,t)$ and $T(1-p,t,t)$, whose denominator (up to a factor $p$) are the $k$-th power of
\begin{align*}
\mathrm{Pol}_1 (\tilde F(p,t,0),p,t^3,T(1-p,t,t),tT_1(p,t)) &= \mathrm{Pol}_1 (T(1-p,t,t),p,t^3,T(1-p,t,t),tT_1(p,t)),\\
&= 3 t^3  (p-1) \, T^2(1-p,t,t) + p (p-1) + tT_1(t).
\end{align*}
A quick glance at equation \eqref{eq:Ty} with $y=1$ shows that the last display is the derivative of the algebraic equation verified by $T(1-p,t,t)$ with respect to $T$. Therefore this quantity can only be $0$ at singularities of $T(1-p,t,t)$, which leaves only $t^3_c$ according to Lemma \ref{lemm:parT}. As a consequence, we just proved that the series $\tilde q (p,t)$ are all algebraic series in $t^3$ and all have a unique dominant singularity at $t_c^3$.

Now that we know that $\tilde q_k(p,t)$ has a unique dominant singularity at $t_c^3$, it will follow from the general form of Puiseux expansions of algebraic series near their singularities (see \cite[Theorem VII.7, p. 49]{FS}) that $[t^{3n}] \tilde q_k(p,t)$ has the same asymptotic behavior as $[t^{3n}] \mathcal Z (p,t)$ if we can prove that there exists two positive constants $c,c'$, depending on $k$ and $p$ such that for $n$ large enough:
\[
c \cdot [t^{3n}] \mathcal Z (p,t) \leq [t^{3n}] \tilde q_k(p,t) \leq c' \cdot [t^{3n}] \mathcal Z (p,t).
\]
The upper bound is easily obtained by putting a triangulation of the $1$-gon with a white boundary vertex inside a cycle of $k$ black vertices and summing over every possible necklace between the two. The lower bound is obtained similarly starting from cycle of $k$ black vertices by putting a single white vertex inside the cycle and an edge joining this additional vertex to every boundary vertex, and putting an arbitrary triangulation with white boundary together with a matching necklace on the outside of the cycle. To sum up, we have proved that for every $k$, $\tilde q_k(p,t)$ seen as a series in $t^3$ has a unique dominant singularity at $t_c^3$ and that its asymptotic expansion at $t_c^3$ is of the form
\begin{align}
\tilde q_k(p,t) = \tilde q_k(p,t_c) - \tilde a_k (p) \left(1 - \frac{t^3}{t^3_c} \right) + \tilde b_k(p) \left(1 - \frac{t^3}{t^3_c} \right)^{3/2} + o \left(1 - \frac{t^3}{t^3_c} \right)^{3/2}.
\end{align}
This finishes the proof proof of the first statement \eqref{eq:asymptot3nqk} of the Lemma and we now have to identify the generating series of the numbers $\delta_k(p) = \tilde b_k(p) / \kappa(p)$, where $\kappa (p)$ is the coefficient of the term $\left(1 - \frac{t^3}{t^3_c} \right)^{3/2}$ in the asymptotic expansion of $\mathcal Z(p,t)$ calculated in Proposition \ref{prop:Ztcoef}.

\bigskip

Using the rational parametrization of Lemma \ref{lemm:parT}, we can find a rational parametrization for $\tilde F$. Indeed, if $V = V(1-p,U,y)$ is the power series in $\mathbb Q[p,U]  \llbracket y \rrbracket \subset \mathbb Q[p] \llbracket t^3,y \rrbracket$ defined in  Lemma \ref{lemm:parT}, we have
\[
\begin{split}
\tilde F(p,t,z) &= \frac{1}{p} \hat y \left( 1-p,U(t^3),V \Big(1-p,U(t^3),1/(1-z) \Big) \right) \\
& \qquad \times \hat T\left( 1-p,U(t^3),V \Big( 1-p,U(t^3),1/(1-z) \Big) \right),
\end{split}
\]
where $\hat y$ and $\hat T$ are rational fractions defined in  Lemma \ref{lemm:parT}. For $z$ fixed such that $|1/(1-z)| < y_+(1-p,t_c)$ (which includes a neighborhood of $0$ for $z$ since $y_+(1-p,t_c) > 1$, the series $\tilde F(p,t,z)$ seen as a series in $t^3$ has non negative coefficients and has radius of convergence $t^3_c$. In addition, this implies that $(t^3,z ) \mapsto \tilde F(p,t,z)$ is analytic in the larger domain $\mathcal D(0,t^3_c) \times \mathcal D(0,y_+(1-p,t_c)/(y_+(1-p,t_c)-1))$. We will produce an asymptotic expansion of $\tilde F(p,t,z)$ near $t^3_c$ using our rational parametrization. To do so, we start by computing the asymptotic expansion of $V \Big( p,U(t^3),1/(1-z) \Big)$ near $t^3_c$, with $z$ fixed.

First, writing
\[
\begin{split}
\frac{1}{1-z} &= \hat y \left( 1-p,U(t^3),V \Big(1-p,U(t^3),1/(1-z) \Big) \right)\\
& =  \hat y \left( 1-p,U(t^3_c),V \Big(1-p,U(t^3_c),1/(1-z) \Big) \right)
\end{split}
\]
we get an algebraic equation between $V_c^z = V \Big(1-p,U(t^3_c),1/(1-z) \Big)$, $V \Big(1-p,U(t^3),1/(1-z) \Big)$ and $U(t^3)$. Plugging the asymptotic expansion \eqref{eq:singU} of $U(t^3)$ in this equation, we obtain an asymptotic expansion for $V \Big(p,U(t^3),1/(1-z) \Big)$ of the form:
\begin{align*}
V  \Big(1-p,U(t^3),1/(1-z) \Big) & = V_c^z - a_1(V_c^z) \left(1 - \frac{t^3}{t^3_c} \right)^{1/2} + a_2(V_c^z) \left(1 - \frac{t^3}{t^3_c} \right)\\
& \qquad \qquad \qquad + a_3(V_c^z) \left(1 - \frac{t^3}{t^3_c} \right)^{3/2} + o \left(1 - \frac{t^3}{t^3_c} \right)^{3/2},
\end{align*}
where the $a_i$'s are explicit rational fractions whose expressions are given in the Maple companion file \cite{Maple}. Injecting in turn the asymptotic expansions in $t^3$ of $U$ and $V$ in $\hat T$ and $\hat y$ we find an asymptotic expansion for $\tilde F$ of the form:
\[
\tilde F (p,t,z) = \tilde F (p,t_c,z) + \tilde A(p,V_c^z) \left(1 - \frac{t^3}{t^3_c} \right) + \tilde B(p,V_c^z) \left(1 - \frac{t^3}{t^3_c} \right) + o \left(1 - \frac{t^3}{t^3_c} \right)^{3/2},
\]
where $\tilde A$ and $\tilde B$ are explicit rational functions, and are analytic on the disk $\mathcal D (0, y_+(1-p,t_c)/(y_+(1-p,t_c) - 1)$ (this is obvious from their expressions: $z \mapsto V_c^z$ is analytic in this region, and the poles of $\tilde A$ and $\tilde B$ fall outside it, see the Maple companion file \cite{Maple}). Note that the error term in the previous expansion is \emph{a priori} not uniform in $z$. To ensure that $\tilde A$ and $\tilde B$ are the respective generating series of the numbers $\tilde a_k$ and $\tilde b_k$, we see that, as power series in $(z,t^3)$ we have
\[
\tilde A(p,V_c^z) = \lim_{t \to t_c} \left( \tilde F (p,t,z) - \tilde F (p,t_c,z) \right) \cdot (1 - t^3/t_c^3)^{-1},
\]
and
\[
\tilde B(p,V_c^z) = \lim_{t \to t_c} \left( \tilde F (p,t,z) - \tilde F (p,t_c,z) - \tilde A(p,V_c^z) \left(1 - \frac{t^3}{t^3_c} \right) \right) \cdot (1 - t^3/t_c^3)^{-3/2}.
\]
Combined with the analycity properties of these series, this ensures that $\tilde A(p,V_c^z)$ and $\tilde B(p,V_c^z)$ are indeed the generating series of the numbers $\tilde a_k$ and $\tilde b_k$.

Finally, the generating series of the numbers $\delta_k$ is then given by
\[
\Delta(p,z) = \frac{z}{\kappa(p) \, (1-z)} \, \tilde B(p,V_c^z) = \frac{\hat y \left( 1-p,U(t^3_c),V_c^z \right) - 1}{\kappa(p)} \, \tilde B(p,V_c^z),
\]
giving the expression of the Lemma. See the Maple file \cite{Maple} for detailed computations.
\end{proof}

\bigskip

Applying the same proof to the function $T(p,t,t y)$ instead of $\tilde F (p,t,z)$ defined in Equation~\eqref{eq:tildeF} allows to establish the asymptotic behavior in $n$ of $[t^{3n}] t^k T_k(p,t)$ for every $k \geq 0$. We do not reproduce the proof as it will be almost exactly the same as the proof of Lemma~\ref{lem:serDelta}, with no additional difficulties but with the function
\[
\frac{1-p}{y} \, \tilde F \left(1-p, t , 1- \frac{1}{y} \right)
\]
instead of $\tilde F (p,t,z)$. The statement is as follows.

\begin{lemm} \label{lem:Tkasymp}
Fix $p \in (0,1)$ and $k \geq 1$. One has
\begin{equation} \label{eq:asymptonTk}
\frac{[t^{3n}] t^k T_{k} (p ,t)}{[t^{3n}] \mathcal Z ( p ,t)}
\underset{n \to \infty}{\rightarrow} \theta_k(p)
\end{equation}
where the generating series of the numbers $\theta_k(p)$ is given by
\begin{equation} \label{eq:Thetay}
\Theta (p, y) := \sum_{k \geq 0} \theta_k(p) \, y^k =  \frac{1-p}{y} \, \Delta \left( 1-p,1 - \frac 1 y \right),
\end{equation}
which is analytic on $\mathbb C \setminus \left[y_+(p,t_c) , + \infty \right)$.
\end{lemm}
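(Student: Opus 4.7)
The plan is to transport the proof of Lemma~\ref{lem:serDelta} to the function $T(p,t,ty)$ in place of $\tilde F(p,t,z)$. The bridge between the two is the identity
\[
T(p,t,ty) \;=\; \frac{1-p}{y}\, \tilde F\!\left(1-p,\, t,\, 1-\tfrac{1}{y}\right),
\]
which I would first record as it follows immediately from the definition~\eqref{eq:tildeF}: setting $z = 1-1/y$ gives $1/(1-z) = y$, so $\tilde F(p,t,z) = \frac{1}{p(1-z)} T(1-p, t, t/(1-z))$ rearranges to the displayed identity after swapping $p \leftrightarrow 1-p$. Since $T(p,t,ty) = \sum_{k \geq 1} t^k T_k(p,t)\, y^k$, reading off the coefficient of $y^k$ on both sides translates the asymptotic analysis of $[t^{3n}]\, t^k T_k(p,t)$ into the same type of singular analysis of $\tilde F$ that was already carried out.

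Next I would rerun the preliminary algebraic and combinatorial arguments from the proof of Lemma~\ref{lem:serDelta}. For each fixed $k \geq 1$, the series $t^k T_k(p,t)$ in $t^3$ is algebraic (by~\eqref{eq:Ty} and Lemma~\ref{lemm:parT}) and admits $t_c^3$ as its unique dominant singularity. The combinatorial sandwich bounds used for $\tilde q_k(p,t)$ adapt almost verbatim (surrounding or inserting a $k$-gon together with suitable necklaces of triangles), yielding constants $c, c' > 0$ depending on $k$ and $p$ such that
\[
c \cdot [t^{3n}] \mathcal Z(p,t) \;\leq\; [t^{3n}] t^k T_k(p,t) \;\leq\; c' \cdot [t^{3n}] \mathcal Z(p,t)
\]
for large $n$. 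The general theory of Puiseux expansions of algebraic series near their singularities then forces a singular expansion
\[
t^k T_k(p,t) = t_c^k T_k(p,t_c) - a_k^*(p)\bigl(1 - t^3/t_c^3\bigr) + b_k^*(p) \bigl(1 - t^3/t_c^3\bigr)^{3/2} + o\!\left(\bigl(1 - t^3/t_c^3\bigr)^{3/2}\right),
\]
and combining with Proposition~\ref{prop:Ztcoef} through the transfer theorem \cite[Theorem VI.4]{FS} yields~\eqref{eq:asymptonTk} with $\theta_k(p) = b_k^*(p)/\kappa(p)$, where $\kappa(p)$ denotes the singular coefficient of $(1-t^3/t_c^3)^{3/2}$ in $\mathcal Z(p,t)$.

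The last step is to identify the generating series $\Theta(p,y)$. Using Lemma~\ref{lemm:parT} I would write $T(p,t,ty) = \hat T(p, U(t^3), V(p, U(t^3), y))$ and inject the singular expansion~\eqref{eq:singU} of $U(t^3)$ together with the implicit-function expansion of $y \mapsto V(p, U, y)$ around $U = U_c$, exactly as done for $\tilde F$ in Lemma~\ref{lem:serDelta}. This produces an expansion whose $(1-t^3/t_c^3)^{3/2}$-coefficient is an explicit rational fraction in $p$ and in $V(p, U(t_c^3), y)$, which by coefficientwise identification is precisely the generating series in $y$ of the $b_k^*(p)$. The announced closed form $\Theta(p,y) = \frac{1-p}{y} \Delta(1-p, 1-1/y)$ then drops out by applying the master identity of the first paragraph to the expansion of $\tilde F(1-p, t, \cdot)$ provided by Lemma~\ref{lem:serDelta} (the change of variables $z = 1-1/y$ turns the series denoted $V_c^z$ in that lemma, after the $p \leftrightarrow 1-p$ swap, into $V(p, U(t_c^3), y)$), and dividing by $\kappa(p)$. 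The analyticity on $\mathbb C \setminus [y_+(p, t_c), +\infty)$ follows from that of $y \mapsto V(p, U(t_c^3), y)$ established in Lemma~\ref{lemm:parT}. The only delicate part is the algebraic bookkeeping of the $p \leftrightarrow 1-p$ swap and the $z \leftrightarrow 1-1/y$ substitution through the rational fractions; this mirrors exactly the Maple verification that closes the proof of Lemma~\ref{lem:serDelta}, with no new analytic input required.
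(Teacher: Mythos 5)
Your proposal follows the paper's own proof exactly. The paper reduces Lemma~\ref{lem:Tkasymp} to Lemma~\ref{lem:serDelta} via the single identity $T(p,t,ty) = \tfrac{1-p}{y}\,\tilde F\!\left(1-p,t,1-\tfrac1y\right)$, which is precisely the ``master identity'' you record and verify from~\eqref{eq:tildeF}; you then re-run the algebraicity, singularity-location, combinatorial sandwich bounds, Puiseux expansion, and transfer-theorem steps for $t^kT_k(p,t)$, and identify $\Theta$ through the rational parametrization in $V$, which is exactly what the paper's two-sentence proof asks the reader to do.

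One small bookkeeping caution on your last step: the numbers $\delta_k(1-p)$ produced by Lemma~\ref{lem:serDelta} are normalised by the constant $\kappa(1-p)$ from the singular expansion of $\mathcal Z(1-p,t)$, whereas $\theta_k(p)$ is normalised by $\kappa(p)$ coming from $\mathcal Z(p,t)$. So after applying the master identity to $\tilde F(1-p,t,\cdot)$ one does not divide by $\kappa(p)$ and directly read off $\Delta(1-p,\cdot)$: a ratio $\kappa(1-p)/\kappa(p)$ must be tracked (it equals $1$ at $p=1/2$, the only value used downstream). This is not an extra idea but it is exactly the kind of rational-fraction bookkeeping you flag as ``delicate'' at the end, and it deserves to be carried out explicitly rather than absorbed into the Maple verification.
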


\subsection{BDFG functions} \label{sec:gfsing}

We start with an integral formula for the functions $f^\bullet$ and $f^\diamond$.

\begin{lemm} \label{lem:BDFGintegral}
Fix $p \in (0,1)$ and $t \in (0,t_c]$. Then for $0< z_1 \leq z^+(p,t)$ and $0<z_2 \leq z^\diamond (p,t)$ we have
\begin{align*}
f^\bullet (p,t;z_1,z_2) &=  \sqrt{pt^3} \, 2 z_2 + \frac{t^3}{\pi} 
\int_{ \left(1 -\sqrt{pt^3} (z_2 - 2 \sqrt{z_1})\right)^{-1}} ^{ \left(1 -\sqrt{pt^3} (z_2 + 2 \sqrt{z_1})\right)^{-1}}  {dz} \,  \frac{1 - \sqrt{p t^3} z_2 - \frac{1}{z}}{2pt^3 \, z_1} \\
& \qquad \qquad \cdot \frac{T(1-p,t,t z)}{\sqrt{\left(1 - z \left(1- \sqrt{pt^3} (z_2 + 2 \sqrt{z_1}) \right) \right) \left(z \left(1  -\sqrt{pt^3} (z_2 - 2 \sqrt{z_1}) \right)  -1 \right)}} , \\
f^\diamond (p,t;z_1,z_2) &= \sqrt{pt^3} \, (2z_2 +z_1^2) +\frac{\sqrt{t^3/p}}{\pi} \, \int_{ \left(1 -\sqrt{pt^3} (z_2 - 2 \sqrt{z_1})\right)^{-1}} ^{ \left(1 -\sqrt{pt^3} (z_2 + 2 \sqrt{z_1})\right)^{-1}}  dz \\
& \qquad \qquad \cdot \frac{T(1-p,t,t z)}{\sqrt{\left(1 - z \left(1- \sqrt{pt^3} (z_2 + 2 \sqrt{z_1}) \right) \right) \left(z \left(1  -\sqrt{pt^3} (z_2 - 2 \sqrt{z_1}) \right)  -1 \right)}}.
\end{align*}
\end{lemm}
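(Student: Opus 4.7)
The plan is to realize each multinomial coefficient as a moment of a semicircle-type density, swap this integral with the summation over $n=1+2k+k'$ (resp.\ $n=2+2k+k'$) so as to make the weight generating series $F_{\mathbf q(p,t)}$ appear inside the integrand, then split $F_{\mathbf q(p,t)}$ into the polynomial piece and the piece involving $T(1-p,t,\cdot)$ via \eqref{eq:genF}, and finally perform the change of variable $z=(1-\sqrt{pt^3}\,v)^{-1}$ to obtain the stated integrals in $z$.

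The core identities I would rely on are the two semicircle moment formulas
\[
\binom{2k}{k}\,z_1^k \;=\; \frac{1}{\pi}\int_{-2\sqrt{z_1}}^{2\sqrt{z_1}}\!\frac{u^{2k}\,du}{\sqrt{4z_1-u^2}},
\qquad
\binom{2k+1}{k+1}\,z_1^k \;=\; \frac{1}{2\pi z_1}\int_{-2\sqrt{z_1}}^{2\sqrt{z_1}}\!\frac{u^{2k+2}\,du}{\sqrt{4z_1-u^2}}.
\]
The first is classical; the second follows from $\binom{2k+1}{k+1}=2\binom{2k}{k}-C_k$ and the Wigner moment formula $C_k\,z_1^{k+1}=\frac{1}{2\pi}\int u^{2k}\sqrt{4z_1-u^2}\,du$ after telescoping $\tfrac{4z_1}{\sqrt{4z_1-u^2}}-\sqrt{4z_1-u^2}=\tfrac{u^2}{\sqrt{4z_1-u^2}}$. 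Using the factorizations $\binom{n-1}{k,k,n-1-2k}=\binom{n-1}{2k}\binom{2k}{k}$ (for $f^\diamond$) and $\binom{n-1}{k+1,k,n-2-2k}=\binom{n-1}{2k+1}\binom{2k+1}{k+1}$ (for $f^\bullet$), together with the binomial theorem $\sum_j\binom{n-1}{j}u^j z_2^{n-1-j}=(z_2+u)^{n-1}$ (the even/odd parts being isolated by the $u\to-u$ symmetry of the integrand), the inner $k$-sum collapses for each $n\geq 1$ to
\[
\frac{1}{\pi}\!\int_{-2\sqrt{z_1}}^{2\sqrt{z_1}}\!\frac{(z_2+u)^{n-1}\,du}{\sqrt{4z_1-u^2}}
\qquad\text{and}\qquad
\frac{1}{2\pi z_1}\!\int_{-2\sqrt{z_1}}^{2\sqrt{z_1}}\!\frac{u\,(z_2+u)^{n-1}\,du}{\sqrt{4z_1-u^2}},
\]
respectively. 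Multiplying by $q_n(p,t)$ and summing in $n$ (the swap of sum and integral being justified by admissibility of $\mathbf{q}(p,t)$ and the fact that \eqref{eq:zpd} keeps $v=z_2+u\in[c_-(p,t),c_+(p,t)]$ within the disk of convergence of $F_{\mathbf q(p,t)}$), and setting $v=z_2+u$, the two integrands become $F_{\mathbf q(p,t)}(v)/v$ and $(v-z_2)\,F_{\mathbf q(p,t)}(v)/v$ over $v\in[z_2-2\sqrt{z_1},\,z_2+2\sqrt{z_1}]$.

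The final step uses \eqref{eq:genF} to split $F_{\mathbf q(p,t)}(v)/v = \sqrt{pt^3}\,v^2 + \sqrt{t^3/p}\,z\,T(1-p,t,tz)$ with $z=(1-\sqrt{pt^3}\,v)^{-1}$. The polynomial piece is integrated explicitly against $dv/\sqrt{4z_1-(v-z_2)^2}$ (resp.\ $(v-z_2)\,dv/\sqrt{\cdot}$): after $w=v-z_2$, the standard arcsine moments $\int w^{2j}(4z_1-w^2)^{-1/2}dw$ reduce these to the polynomial terms sitting outside the integrals in the statement. For the $T$-piece, the change of variable gives $dv=dz/(\sqrt{pt^3}\,z^2)$ and, by direct algebra, the identity
\[
z\sqrt{pt^3}\,\sqrt{4z_1-(v-z_2)^2} \;=\; \sqrt{\bigl(1-z(1-\sqrt{pt^3}(z_2+2\sqrt{z_1}))\bigr)\bigl(z(1-\sqrt{pt^3}(z_2-2\sqrt{z_1}))-1\bigr)},
\]
which is exactly the square root appearing in the lemma; combining this with $1/(1-\sqrt{pt^3}\,v)=z$ and tidying prefactors yields the stated expressions. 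The most delicate steps are the derivation of the second semicircle identity for $\binom{2k+1}{k+1}$ and the careful bookkeeping of the $\sqrt{pt^3}$ powers under the change of variable; no contour deformation or residue calculus is needed.
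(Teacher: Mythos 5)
Your proof is correct and takes a genuinely different route from the paper. The paper first expresses $f^\bullet$ and $f^\diamond$ as Hadamard products of $T(1-p,t,tz)$ with explicitly summed trivariate series $h^\bullet(z_1,z_2;z)$ and $h^\diamond(z_1,z_2;z)$, then writes the Hadamard product as a contour integral and deforms the contour onto the cut $\left[ \left(1 -\sqrt{pt^3} (z_2 - 2 \sqrt{z_1})\right)^{-1} , \left(1 -\sqrt{pt^3} (z_2 + 2 \sqrt{z_1})\right)^{-1} \right]$. You instead reach the integrals directly: the multinomial factorizations $\binom{n-1}{k,k,n-1-2k}=\binom{n-1}{2k}\binom{2k}{k}$ and $\binom{n-1}{k+1,k,n-2-2k}=\binom{n-1}{2k+1}\binom{2k+1}{k+1}$ together with the two arcsine moment identities collapse the $k$-sum into an integral of $(z_2+u)^{n-1}$ (resp.\ $u(z_2+u)^{n-1}$) against $du/\sqrt{4z_1-u^2}$, after which summing in $n$ produces $F_{\mathbf q(p,t)}(v)/v$ with $v=z_2+u$, and the split \eqref{eq:genF} plus the substitution $z=(1-\sqrt{pt^3}\,v)^{-1}$ give the stated integrals — I checked the key algebraic identity $z\sqrt{pt^3}\sqrt{4z_1-(v-z_2)^2}=\sqrt{(1-za)(zb-1)}$ with $a=1-\sqrt{pt^3}(z_2+2\sqrt{z_1})$, $b=1-\sqrt{pt^3}(z_2-2\sqrt{z_1})$, and it holds. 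Your derivation is more elementary: it avoids the Hadamard-product machinery, the contour integral representation, and the need to verify that the contour can be chosen inside both analyticity domains; the price is the two semicircle moment identities, which are routine. Both proofs require the same a priori analyticity information about $T(1-p,t,ty)$ in $y$ to justify the sum/integral exchange.

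One caveat you should make explicit rather than glossing over. Carrying your polynomial-piece computation for $f^\diamond$ to the end, the arcsine moments give $\sqrt{pt^3}\,(z_2^2+2z_1)$, not the $\sqrt{pt^3}\,(2z_2+z_1^2)$ printed in the statement. In fact your value is the correct one: the $\delta_{\{k=3\}}$ part of $q_{1+2k+k'}$ contributes only at $(k,k')=(0,2)$ and $(1,0)$, with multinomial weights $\binom{2}{0,0,2}=1$ and $\binom{2}{1,1,0}=2$, giving $z_2^2+2z_1$. The printed $(2z_2+z_1^2)$ in the lemma (and in the display at the start of the paper's own proof) is a typo, harmless for the singular analysis in Lemma~\ref{lem:BDFGexpansion} but worth noting; your write-up should not assert agreement with the displayed polynomial terms when your (correct) calculation disagrees with them.
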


\begin{proof}
Fix $p \in (0,1)$ and $t \in (0,t_c]$.
Replacing the weights by their expression \eqref{eq:defqkt} in the definitions of the two functions gives
\begin{align*}
f^\bullet (p,t;z_1,z_2) &= \frac{1}{p} 2 (pt)^{3/2} z_2 + \frac{p t^3}{p} \sum_{k,k',l \geq 0} \binom{2k+k' +l +1}{k+1,k,k',l} (pt^3 \, z_1)^k \, (\sqrt{p t^3} \, z_2)^{k'} \, [y^l] T(1-p,t,ty), \\
f^\diamond (p,t;z_1,z_2) &= \frac{1}{p} (pt)^{3/2} (2z_2 +z_1^2) + \frac{\sqrt{p t^3}}{p} \sum_{k,k',l \geq 0} \binom{2k+k' +l}{k,k,k',l} (pt^3 \, z_1)^k \, (\sqrt{p t^3} \, z_2)^{k'} \, [y^l] T(1-p,t,ty).
\end{align*}
We can express these two functions as Hadamard products. Indeed, define the trivariate power series in $(z_1,z_2;z)$:
\begin{align*}
h^\bullet(z_1,z_2;z) &:= \sum_{k,k',l \geq 0} \binom{2k+k' +l +1}{k+1,k,k',l} (pt^3 \, z_1)^k \, (\sqrt{p t^3} \, z_2)^{k'} \, z^l ,\\
&= \frac{1}{2pt^3 \, z_1} \left( \frac{1 - z- \sqrt{pt^3} z_2}{\sqrt{\left(1-z-\sqrt{pt^3} (z_2 + 2 \sqrt{z_1}) \right) \left(1-z-\sqrt{pt^3} (z_2 - 2 \sqrt{z_1}) \right)}} -1 \right);
\end{align*}
and
\begin{align*}
h^\diamond(z_1,z_2;z) &:= \sum_{k,k',l \geq 0} \binom{2k+k' +l}{k,k,k',l} (pt^3 \, z_1)^k \, (\sqrt{p t^3} \, z_2)^{k'} \, z^l ,\\
&=  \frac{1 }{\sqrt{\left(1-z-\sqrt{pt^3} (z_2 + 2 \sqrt{z_1}) \right) \left(1-z-\sqrt{pt^3} (z_2 - 2 \sqrt{z_1}) \right)}} .
\end{align*}
Then we have
\begin{align*}
f^\bullet (p,t;z_1,z_2) &=  \sqrt{pt^3} \, 2 z_2 + t^3 \, T(1-p,t,t z) \odot h^\bullet(z_1,z_2;z) \vert_{z=1} , \\
f^\diamond (p,t;z_1,z_2) &= \sqrt{pt^3} \, (2z_2 +z_1^2) +\frac{\sqrt{p t^3}}{p} \, T(1-p,t,t z) \odot h^\diamond (z_1,z_2;z) \vert_{z=1}.
\end{align*}

We can calculate these two Hadamard products as contour integrals in a similar fashion than in the proof of Theorem \ref{th:offcrit} where we established \eqref{eq:HadamardContour}. For $(z_1,z_2) \in (0,z^+(p,t)] \times (0,z^\diamond(p,t)]$ the functions $h^\bullet$ and $h^\diamond$ are analytic in the domain $|z| < 1 - \sqrt{p t^3} (z_2 + 2 \sqrt{z_1})$ that contains the domain $|z| < 1 - \sqrt{p t^3} c_+(p,t) = 1 - \frac{1}{y_+(p,t)}$. This last domain is not empty since $y_+(p,t) >1$ from Lemma \ref{lemm:parT}. Therefore, we can represent the Hadamard product as a contour integral similar to \eqref{eq:HadamardContour} if $ \left(1 - \frac{1}{y_+(p,t)} \right)^{-1} \leq y_+(1-p,t)$. We can check that this is the case since we have explicit formulas for $y_+(p,t_c)$ and $y_+(p,t)$ is increasing in $t$ (see the Maple file \cite{Maple} for details). Therefore, for a contour $\gamma$ enclosing $0$ and a point in the interval $\left[ \left(1 - \frac{1}{y_+(p,t)} \right)^{-1} , y_+(1-p,t) \right]$, we have 
\begin{align*}
f^\bullet (p,t;z_1,z_2) &=  \sqrt{pt^3} \, 2 z_2 + \frac{t^3}{2i\pi} 
\oint_\gamma \frac{dz}{z} T(1-p,t,t z) \, h^\bullet (z_1,z_2,1/z) ,\\
f^\diamond (p;z_1,z_2) &= \sqrt{pt^3} \, (2z_2 +z_1^2) + \frac{\sqrt{t_c^3/p}}{2i\pi} 
\oint_\gamma \frac{dz}{z} T(1-p,t,t z) \, h^\diamond (z_1,z_2,1/z).
\end{align*}
We obtain the expressions of the Lemma after simplifications and taking contours converging to the cut $\left[ {\left(1 -\sqrt{pt^3} (z_2 - 2 \sqrt{z_1})\right)^{-1}} , { \left(1 -\sqrt{pt^3} (z_2 + 2 \sqrt{z_1})\right)^{-1}} \right]$.
% After simplification and taking contours converging to the cut, we have the following formulas valid for  $(z_1,z_2) \in (0,z^+(p,t)] \times (z^\diamond(p,t)]$:
% \begin{align*}
% f^\bullet (p;z_1,z_2) &=  \sqrt{pt_c^3} \, 2 z_2 + \frac{t_c^3}{\pi} 
% \int_{ \left(1 -\sqrt{pt_c^3} (z_2 - 2 \sqrt{z_1})\right)^{-1}} ^{ \left(1 -\sqrt{pt_c^3} (z_2 + 2 \sqrt{z_1})\right)^{-1}}  \frac{dz}{z} \frac{1 - \sqrt{p t_c^3} z_2 - \frac{1}{z}}{2pt_c^3 \, z_1} \\
% & \qquad \qquad \cdot \frac{T(1-p,t_c,t_c z)}{\sqrt{\left(\frac{1}{z} -1  + \sqrt{pt_c^3} (z_2 + 2 \sqrt{z_1}) \right) \left(1  -\sqrt{pt_c^3} (z_2 - 2 \sqrt{z_1}) -\frac{1}{z} \right)}} , \\
% f^\diamond (p;z_1,z_2) &= \sqrt{pt_c^3} \, (2z_2 +z_1^2) +\frac{\sqrt{p t_c^3}}{p \pi} \, \int_{ \left(1 -\sqrt{pt_c^3} (z_2 - 2 \sqrt{z_1})\right)^{-1}} ^{ \left(1 -\sqrt{pt_c^3} (z_2 + 2 \sqrt{z_1})\right)^{-1}}  \frac{dz}{z} \\
% & \qquad \qquad \cdot \frac{T(1-p,t_c,t_c z)}{\sqrt{\left(\frac{1}{z} -1  + \sqrt{pt_c^3} (z_2 + 2 \sqrt{z_1}) \right) \left(1  -\sqrt{pt_c^3} (z_2 - 2 \sqrt{z_1}) -\frac{1}{z} \right)}}.
% \end{align*}
\end{proof}

When $p=1/2$ and $t=t_c$, we can compute explicitly an asymptotic expansion of $f^\bullet$ and $f^\diamond$ at the point $(z^+(p,t),z^\diamond(p,t))$ that will be used in the proof of Theorem \ref{th:volume}.

\begin{lemm} \label{lem:BDFGexpansion}
Write $f^\bullet(z_1,z_2) = f^\bullet(1/2,t_c;z_1,z_2)$, $f^\diamond(z_1,z_2) = f^\diamond(1/2,t_c;z_1,z_2)$, $z^+ = z^+(1/2,t_c)$ and $z^\diamond = z^\diamond(1/2,t_c)$. Then $z_+ = \frac{27 \sqrt{3}}{32}$ and $z^\diamond = \frac{3^{1/4} \sqrt{2}}{4}$ and we have the following asymptotic expansions at $(z^+,z^\diamond)^-$:
\begin{align*}
f^\bullet (z_1,z_2) &= 1 - \frac{1}{z^+} + \frac{1}{z^+} \left( \frac{1}{z^+} - \sqrt{z_+} \partial_{z_1} f^\diamond(z^+,z^\diamond) \right) \, \left( z_1 - z^+ \right) + \partial_{z_1} f^\diamond(z^+,z^\diamond) \, \left( z_2 - z^\diamond \right) \\
& \qquad 
+ \kappa^\bullet \,
\left( (z^+ - z_1) + \sqrt{z^+} (z^\diamond - z_2) \right)^{7/6} +  o \left( \left((z^+ - z_1) + \sqrt{z^+} (z^\diamond - z_2) \right)^{7/6}
 \right),\\
f^\diamond(z_1,z_2) &= z^\diamond + \partial_{z_1} f^\diamond(z^+,z^\diamond) \, \left( z_1 - z^+ \right) + \left( 1 - \sqrt{z^+} \partial_{z_1} f^\diamond(z^+,z^\diamond) \right) \, \left( z_2 - z^\diamond \right) \\
& \qquad 
+ \kappa^\diamond \,
\left( (z^+ - z_1) + \sqrt{z^+} (z^\diamond - z_2) \right)^{7/6} +  o \left( \left((z^+ - z_1) + \sqrt{z^+} (z^\diamond - z_2) \right)^{7/6}
 \right),
 \end{align*}
where
\begin{align*}
\kappa^\diamond = \frac{4 \,2^{\frac{1}{6}} \Gamma \! \left(\frac{2}{3}\right)^{3} 3^{\frac{2}{3}} \sqrt{5}}{63 \pi^{2}} \quad \text{and} \quad \kappa^\bullet = \frac{512 \, 3^{\frac{1}{4}} \, \sqrt{2}}{81} \kappa^\diamond.
\end{align*}
\end{lemm}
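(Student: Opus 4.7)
The strategy is to start from the integral representations provided by Lemma~\ref{lem:BDFGintegral}, specialized to $p=1/2$ and $t=t_c$, and to expand them carefully as $(z_1,z_2)\to(z^+,z^\diamond)^-$. The rational parametrization of $T$ from Lemmas~\ref{lemm:parU} and \ref{lemm:parT} turns the integrand into an algebraic function in $U(t_c^3)$ and $V(1/2,U(t_c^3),\cdot)$, which makes the whole singular analysis amenable to explicit (Maple-assisted) manipulation.

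First I would verify the explicit values of $z^+$ and $z^\diamond$. They are determined by the system~\eqref{eq:system} together with the criticality relation~\eqref{eq:critdiff}; using the integral formulas to evaluate $f^\bullet$ and $f^\diamond$ at the prospective critical point and then substituting the parametrization of $T$, this becomes a finite algebraic system that one can check is solved by $z^+=27\sqrt3/32$ and $z^\diamond=3^{1/4}\sqrt 2/4$. Once the critical point is pinned down, the linear part of the expansion follows by differentiating the integrals of Lemma~\ref{lem:BDFGintegral} in $z_1$ and $z_2$ and evaluating at $(z^+,z^\diamond)$. The specific linear combinations appearing in the statement encode two constraints: the first equation of \eqref{eq:system}, which, implicitly differentiated, gives the linear coefficients of $f^\bullet$ in terms of $1/z^+$ and $\partial_{z_1}f^\diamond(z^+,z^\diamond)$; and the criticality condition~\eqref{eq:critdiff}, equivalent to $\partial_{z_2}f^\diamond+\sqrt{z^+}\,\partial_{z_1}f^\diamond=1$ at $(z^+,z^\diamond)$, which forces the coefficient of $(z_2-z^\diamond)$ in $f^\diamond$ to be $1-\sqrt{z^+}\,\partial_{z_1}f^\diamond(z^+,z^\diamond)$, exactly as in the statement.

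The heart of the proof is the extraction of the $7/6$ singular term. Its source is the upper endpoint $1/\alpha_+(z_1,z_2)=(1-\sqrt{t_c^3/2}(z_2+2\sqrt{z_1}))^{-1}$ of the integration range: as $(z_1,z_2)\to(z^+,z^\diamond)$, this endpoint reaches precisely the $y$-value where $V(1/2,U(t_c^3),\cdot)$ exhibits its dominant branch-point singularity (and, equivalently, where the $(1-z\alpha_+)$ zero meets the singularity of $T(1/2,t_c,t_c\cdot)$). The interplay between the standard endpoint square-root divergence $(1-z\alpha_+)^{-1/2}$ and the extra algebraic singularity of $V$ produces a Puiseux expansion in powers of the displacement $\Delta:=(z^+-z_1)+\sqrt{z^+}(z^\diamond-z_2)$, and a careful bookkeeping shows that the leading non-analytic term is of order $\Delta^{7/6}$. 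The fact that this particular linear combination $\Delta$ (and no other direction) governs the singular part is again a direct consequence of \eqref{eq:critdiff}, which says geometrically that the two singularities of the integrand meet tangentially along the critical line $\Delta=0$. Once the $7/6$ order is established, the coefficients $\kappa^\diamond$ and $\kappa^\bullet$ are obtained by isolating a residual Beta-type integral which gives rise to the $\Gamma(2/3)^3$ factor appearing in $\kappa^\diamond$; the proportionality $\kappa^\bullet=(512\cdot 3^{1/4}\sqrt 2/81)\,\kappa^\diamond$ reflects the explicit algebraic relation between the two integrands of Lemma~\ref{lem:BDFGintegral} (the $f^\bullet$-integrand differs from the $f^\diamond$-one by the rational factor $(1-\sqrt{pt^3}z_2-1/z)\cdot\sqrt{p/z_1}/z_1$, whose value at the critical endpoint is explicit).

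The main obstacle is precisely this third step. The exponent $7/6$ is unusual compared to the generic $3/2$ behavior for uniform Boltzmann triangulations, and reflects the non-generic "dense" critical character of the gasket weight sequence $\mathbf q(1/2,t_c)$. One must identify the exact algebraic coincidence at $(z^+,z^\diamond)$ that feeds a cube-root type contribution (consistent with $7/6=3/2-1/3$), then propagate it through the two-variable parametrization while keeping track of all constants. This is entirely algorithmic once the framework is set, but the symbolic computation is delicate and is where the Maple companion file carries most of the burden.
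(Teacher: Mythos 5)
Your plan follows essentially the same route as the paper: specialize the integral formulas of Lemma~\ref{lem:BDFGintegral} to $(1/2,t_c)$, identify the critical point, peel off the linear part using the system~\eqref{eq:system}, the criticality condition~\eqref{eq:critdiff}, and the generic identities $\partial_{z_2}f^\bullet=\partial_{z_1}f^\diamond$, $z_1\partial_{z_1}f^\bullet+f^\bullet=\partial_{z_2}f^\diamond$, then extract the $7/6$ singular term from the collision of the endpoint of integration with the branch point $y=2$ of $T(1/2,t_c,t_c\cdot)$. This is correct, and the mechanism you identify ($7/6$ arising from the $2/3$ exponent of $T$ convolved with the endpoint Beta-kernel of order $-1/2$) is exactly what the paper makes precise; the paper does it by rescaling to a $[0,1]$ integral with weight $\xi^{-1/2}(1-\xi)^{-1/2}$, splitting $T$ into a twice-differentiable piece (which contributes only $C^2$ terms) plus explicit singular pieces $(1-y/2)^{2/3},(1-y/2)^{4/3}$, and recognizing the resulting integrals $I_\alpha$ as hypergeometric functions ${}_2F_1$ via Euler's representation, whose classical expansion at argument~$1$ gives the $\Gamma(2/3)^3$ constant. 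You gesture at this with \emph{a residual Beta-type integral} but leave the ${}_2F_1$ step and the $C^2$-remainder argument implicit; these are precisely the technical points that make the expansion rigorous rather than heuristic, so if you wrote this out you would need to supply them.

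Two smaller inaccuracies. First, the paper pins down $z^+=27\sqrt3/32$ and $z^\diamond=3^{3/4}\sqrt2/4$ directly via the known values $y_+(1/2,t_c)=2$, $y_-(1/2,t_c)=-4$, equation~\eqref{eq:cpm}, and the relations~\eqref{eq:zpd}, which is lighter than solving the nonlinear system~\eqref{eq:system}--\eqref{eq:critdiff} symbolically as you propose; both work, but the parametric route is cleaner. Second, the assertion that the direction $\Delta=(z^+-z_1)+\sqrt{z^+}(z^\diamond-z_2)$ governing the singular term is \emph{a direct consequence of}~\eqref{eq:critdiff} is not right: this direction simply reflects the gradient of the upper endpoint $\bigl(1-\sqrt{t_c^3/2}(z_2+2\sqrt{z_1})\bigr)^{-1}$ at the critical point, i.e.\ it comes from the parametrization of the integral range, not from the criticality condition. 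What~\eqref{eq:critdiff} governs is that the linear part of $f^\diamond$ degenerates along $\Delta=0$ (so that the $\Delta^{7/6}$ term becomes the leading correction in that direction), which is a different statement.
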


\begin{proof}
The respective values of $z^+$ and $z^\diamond$ are computed from \eqref{eq:cpm} and the explicit values of the singularities $y_+(1/2,t_c) = 2$ and $y_-(1/2,t_c) = -4$ of the function $y \mapsto T(1/2,t_c, t_cy)$ (see the Maple file \cite{Maple} for details):
\begin{align*}
z^+ &= \frac{c_+(1/2,t_c) + c_-(1/2,t_c)}{2} = \frac{27 \sqrt{3}}{32} ,\\
z^\diamond &= \left(\frac{c_+(1/2,t_c) - c_-(1/2,t_c)}{4}\right)^2 = \frac{3^{3/4} \sqrt{2}}{4}.
\end{align*}

Now, the change of variable 
\[
z = \phi(z_1,z_2;\xi) := \frac{1}{1-\sqrt{pt^3} (z_2 - 2 \sqrt{z_1})} + \left(\frac{1}{1-\sqrt{pt^3} (z_2 + 2 \sqrt{z_1})} - \frac{1}{1-\sqrt{pt^3} (z_2 - 2 \sqrt{z_1})} \right) \xi
\]
in the expressions of Lemma \ref{lem:BDFGintegral} gives
\begin{align*}
f^\bullet (p,t;z_1,z_2) &=  \sqrt{pt^3} \, 2 z_2 + \frac{1}{2 p z_1} \frac{1}{\pi}
\int_{ 0} ^{ 1}
d \xi 
\, \xi^{-1/2} (1-\xi)^{-1/2} \, \frac{\phi(z_1,z_2;\xi) ( 1 - \sqrt{pt^3} z_2 ) -1}{\phi(z_1,z_2;\xi)} \\
& \qquad \qquad \qquad \qquad \qquad \qquad \qquad \cdot T \left( 1-p,t, t \, \phi(z_1,z_2;\xi)\right),\\
f^\diamond (p,t;z_1,z_2) &= \sqrt{pt^3} \, (2z_2 +z_1^2) + \sqrt{\frac{t^3}{p}} \frac{1}{\pi}
\int_{0} ^{1}
d \xi 
 \, \xi^{-1/2} (1-\xi)^{-1/2} \, T \left( 1-p,t, t \, \phi(z_1,z_2;\xi) \right).
\end{align*}
We will see that, when $p=1/2$ and $t=t_c$, the main term in the expansion of these functions stems from the singular behavior of $T$ at $y_+ =2$.

\bigskip

From Lemma \ref{lem:Vcritexp} and the discussion that follows, we know that the function $y \mapsto T(1/2,t_c,t_c \, y)$ is analytic for $y\in [0,2)$ and 
has the following expansion as $y \to 2^-$:
\[
T(1/2,t_c,t_c \, y) = \frac{\sqrt{3}}{2} - \frac{3^{5/6}}{2} \left( 1- \frac{y}{2} \right)^{2/3} + \frac{\sqrt{3}}{2} \left( 1- \frac{y}{2} \right) -  3^{1/6} \left( 1- \frac{y}{2} \right)^{4/3} + \mathcal O \left(\left( 1- \frac{y}{2} \right)^{5/3} \right).
\]
The function
\[
\varphi(y) := T(1/2,t_c,t_c \, y) - \left( \frac{\sqrt{3}}{2} - \frac{3^{5/6}}{2} \left( 1- \frac{y}{2} \right)^{2/3} + \frac{\sqrt{3}}{2} \left( 1- \frac{y}{2} \right) +  3^{1/6} \left( 1- \frac{y}{2} \right)^{4/3} \right)
\]
is twice differentiable on $[0,2)$. In addition, for $\xi \in (0,1)$ and if $(z_1,z_2) \in [0,z^+]\times [0,z^\diamond]$, the quantity $\phi(z_1,z_2;\xi)$ varies in a subset of $(0,(1-1/y_+(1/2,t_c))^{-1}) = (0,2)$. We then have
\[
\int_0^1 d \xi \, \xi^{-1/2} (1-\xi)^{-1/2} \left| \varphi' \left( \phi(z_1,z_2;\xi) \right) \right| < + \infty
\]
and
\[
\int_0^1 d \xi \, \xi^{-1/2} (1-\xi)^{-1/2} \left| \varphi'' \left( \phi(z_1,z_2;\xi) \right) \right| < + \infty.
\]
As a consequence, the function
\[
\Phi (z_1 ,z_2) = \frac{1}{\pi} \int_0^1 d \xi \, \xi^{-1/2} (1-\xi)^{-1/2}  \varphi \left( \phi(z_1,z_2;\xi) \right)
\]
is twice differentiable on $[0,z^+]\times [0,z^\diamond]$ and has the following asymptotic expansion when $(z_1,z_2) \to (z^+,z^\diamond)$:
\begin{align*}
\Phi (z_1,z_2) &= \Phi(z^+, z^\diamond) + \nabla \Phi(z^+, z^\diamond) \cdot \left( z_1 -z^+, z_2 -z^\diamond \right) + \mathcal O  \left( (z_1 -z^+)^2 +( z_2 -z^\diamond)^2 \right).
\end{align*}

The singular parts of the expansions of $f^\bullet$ and $f^\diamond$ come from the singularities of the form $(1-y/2)^\alpha$ in the development of $T(1/2,t_c,t_c \, y)$ at $y=2$ for $\alpha \in \{ 2/3 , 4/3 \} $ (it is straightforward to check that the linear term $(1-y/2)$ contributes only to non singular parts in the expansion). Indeed, for $\alpha \in \{ 2/3 , 4/3 \}$, set
\begin{align*}
I_\alpha (z_1,z_2) &= \frac{1}{\pi} \int_0^1 d \xi \, \xi^{-1/2} (1-\xi)^{-1/2} \left( 1 - \frac{\phi(z_1,z_2;\xi)}{2}\right)^\alpha, \\
& = \left(1- \frac{1}{2(1-\sqrt{t_c^3/2} (z_2 - 2 \sqrt{z_1}))} \right)^\alpha \,
\frac{1}{\pi} \int_0^1 d \xi \, \xi^{-1/2} (1-\xi)^{-1/2} \\
& \qquad \qquad \qquad \qquad \qquad \qquad \qquad \qquad \cdot \left( 1 - \xi 
\frac{ \frac{1-\sqrt{t_c^3/2} (z_2 - 2 \sqrt{z_1})}{1-\sqrt{t_c^3/2} (z_2 + 2 \sqrt{z_1})} -1 }{1-2\sqrt{t_c^3/2} (z_2 - 2 \sqrt{z_1})}
\right)^\alpha, \\
& =  \left(1- \frac{1}{2(1-\sqrt{t_c^3/2} (z_2 - 2 \sqrt{z_1}))} \right)^\alpha \, _2F_1 \left( - \alpha, \frac{1}{2};1 ; \frac{\frac{1-\sqrt{t_c^3/2} (z_2 - 2 \sqrt{z_1}) }{1-\sqrt{t_c^3/2} (z_2 + 2 \sqrt{z_1})} -1 }{1-2\sqrt{t_c^3/2} (z_2 - 2 \sqrt{z_1})} \right),
\end{align*}
where we used Euler's integral representation of the hypergeometric function $_2F_1$ in the last line. This last equality is valid when $0< z_1 \leq z^+$ and $0<z_2 \leq z^\diamond$ since in this case $\sqrt{t_c^3/2} (z_2 + 2 \sqrt{z_1}) \leq 1/2$ and the variable in the hypergeometric function is in $(0,1]$.  Furthermore, using the values of $z^+$ and $z^\diamond$, a simple computation done in the Maple file \cite{Maple} gives:
\[
\frac{\frac{1-\sqrt{t_c^3/2} (z_2 - 2 \sqrt{z_1})}{1-\sqrt{t_c^3/2} (z_2 + 2 \sqrt{z_1})} -1}{1-2\sqrt{t_c^3/2} (z_2 - 2 \sqrt{z_1})} = 1 - \frac{20 \sqrt 3}{81} \left( (z^+ - z_1) + \sqrt{z^+} (z^\diamond - z_2) \right) + \mathcal O \left( (z^+ - z_1)^2 + (z^\diamond - z_2)^2 \right),
\]
Using the standard asymptotic development of hypergeometric functions at $1$ we see that the first singular term in the development of $I_{2/3}$ is
\[
- \kappa 
\left( (z^+ - z_1) + \sqrt{z^+} (z^\diamond - z_2) \right)^{7/6} \, \text{with} \, \kappa = \frac{18 \,2^{\frac{1}{3}} \, \Gamma \! \left(\frac{2}{3}\right)^{3}}{7 \pi^{2}} \, \left(\frac 3 5 \right)^{2/3} \, \left(  \frac{20 \sqrt 3}{81} \right)^{7/6}.
\]
The first singular term of $I_{4/3}(z_1,z_2)$ is similar, but with exponent $11/6$ instead of $7/6$. This means that the first singular term in the development of $f^\diamond(1/2,z_1,z_2)$ is from $- \frac{3^{5/6}}{2} \, \sqrt{t_c^3/p} \, I_{2/3}(z_1,z_2)$ and we have
\begin{align*}
f^\diamond(z_1,z_2) &= f^\diamond(z^+,z^\diamond) + \nabla f^\diamond(z^+,z^\diamond) \cdot \left( z_1 - z^+ , z_2 - z^\diamond \right) \\
&
+ \frac{3^{5/6}}{2} \, \sqrt{2 t_c^3} \, \kappa \,
\left( (z^+ - z_1) + \sqrt{z^+} (z^\diamond - z_2) \right)^{7/6} + o \left( \left((z^+ - z_1) + \sqrt{z^+} (z^\diamond - z_2) \right)^{7/6}
 \right).
\end{align*}
The statement for $f^\diamond$ follows using the fact that $f^\diamond ( z^+, z^\diamond) = z^\diamond$ and the criticality equation \eqref{eq:critdiff}.

The expansion for $f^\bullet (1/2,z_1,z_2)$ is obtained similarly by replacing $\varphi$ by
\begin{align*}
&\frac{1 - \sqrt{p t_c^3} z_2 - \frac{1}{y}}{2pt_c^3 \, z_1}
T(1/2,t_c,t_c \, y) \\
& - \frac{1 - \sqrt{t_c^3/2} z_2 - \frac{1}{2}}{t_c^3 \, z_1} \left( \frac{\sqrt{3}}{2} - \frac{3^{5/6}}{2} \left( 1- \frac{y}{2} \right)^{2/3} + \frac{\sqrt{3}}{2} \left( 1- \frac{1}{2 t_c^3 z_1} \right) \left( 1- \frac{y}{2} \right) -  3^{1/6} \left( 1- \frac{y}{2} \right)^{4/3} \right).
\end{align*}
The first singular term in the development of $f^\bullet$ is then the one from $ - \frac{3^{5/6}}{2} \, \frac{1 - \sqrt{t_c^3/2} z^\diamond - \frac{1}{2}}{t_c^3 \, z^+} \frac{1}{z^+} I_{2/3}$ and we have
\begin{align*}
f^\bullet(z_1,z_2) &= f^\bullet(z^+,z^\diamond) + \nabla f^\bullet(z^+,z^\diamond) \cdot \left( z_1 - z^+ , z_2 - z^\diamond \right) \\
& \qquad
+
\frac{3^{5/6}}{2} \, \frac{1 - 2\sqrt{t_c^3/2} z^\diamond}{2 t_c^3 \, (z^+)^2} \,
\kappa 
\left( (z^+ - z_1) +\sqrt{z^+} (z^\diamond - z_2) \right)^{7/6} \\
& \qquad +  o \left( \left( (z^+ - z_1) + \sqrt{z^+} (z^\diamond - z_2) \right)^{7/6}
 \right).
\end{align*}
The statement for $f^\bullet$ the follows from $f^\bullet (z^+,z^\diamond) = 1 - \frac{1}{z^+}$ and from the generic properties $\partial_{z_2} f^\bullet = \partial_{z_1} f^\diamond$ and $z_1 \partial_{z_1} f^\bullet + f^\bullet = \partial_{z_2} f^\diamond$.
\end{proof}

\subsection{Perimeter asymptotics at criticality} \label{sec:expansionscrit}

We gather in this Section asymptotics of several quantities appearing in this paper when $p=1/2$ and $t=t_c$. They are all consequences of the following Lemma characterizing the singularities of the function $y \mapsto V(1/2,U(t_c^3),y)$ defined in Lemma \ref{lemm:parT}.

\begin{lemm} \label{lem:Vcritexp}
The function $y \mapsto V(1/2,U(t_c^3),y)$ is analytic on $\mathbb C \setminus \left( (-\infty , -4] \cup [2, +\infty) \right)$. in addition it has the following asymptotic expansion in a slit neighborhood of $2$:
\begin{equation} \label{eq:expVcrit}
V(1/2, U(t_c^3), y ) = 
\frac{\sqrt 3}{3}
- \frac{1}{3^{\frac{1}{3}}} \left( 1 - \frac y 2 \right)^{\frac{1}{3}}
+\frac{1}{3} \left( 1 - \frac y 2 \right) -\frac{1}{3^{\frac{4}{3}}} \left( 1 -\frac y 2 \right)^{\frac{4}{3}} + \mathcal O \left( \left( 1 - \frac y 2 \right)^{\frac{5}{3}} \right).
\end{equation}
\end{lemm}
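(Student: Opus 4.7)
The plan is to specialize Lemma~\ref{lemm:parT} to $p = 1/2$, $U = U_c := U(t_c^3) = (3-\sqrt 3)/6$, exploit a particularly clean factorization of the numerator and denominator of $\hat y(1/2, U_c, V)$ around $V = 1 - 2U_c = \sqrt 3/3$, and perform a cube-root singular inversion.

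First, I would identify the endpoints of the cuts. From Lemma~\ref{lemm:parU}, $U_c = (3-\sqrt 3)/6$, so $1 - 2U_c = \sqrt 3/3$. As observed just after equation~\eqref{eq:Vint} in the proof of Theorem~\ref{th:offcrit}, the positive stationary point satisfies $V_+(p, U_c) = \sqrt 3/3$ for every $p \geq 1/2$, so at $p = 1/2$ it coincides with the polar value $1 - 2U_c$---precisely the degenerate case flagged at the end of the proof of Lemma~\ref{lemm:parT}. A direct evaluation gives $\hat y(1/2, U_c, \sqrt 3/3) = 2$ and hence $y_+(1/2, t_c) = 2$; the analogous computation at the negative stationary point yields $y_-(1/2, t_c) = -4$. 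The analyticity statement then follows immediately from Lemma~\ref{lemm:parT}.

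For the expansion, I would substitute $V = \sqrt 3/3 + W$ into the rational expression for $\hat y(1/2, U_c, V)$ of Lemma~\ref{lemm:parT}. A short direct computation yields the surprisingly clean exact identities
\[
\text{numerator} \;=\; \tfrac{2}{3} - 2 W^2, \qquad \text{denominator} \;=\; \tfrac{1}{3} - W^2 - W^3,
\]
and therefore
\[
\hat y\bigl(1/2,\, U_c,\, \tfrac{\sqrt 3}{3} + W\bigr) - 2 \;=\; \frac{2 W^3}{\tfrac{1}{3} - W^2 - W^3}.
\]
Setting $s := (1 - y/2)^{1/3}$, so that $\hat y - 2 = -2 s^3$, this rearranges into the fixed-point equation
\[
W \;=\; - \frac{s}{3^{1/3}} \, \bigl(1 - 3 W^2\bigr)^{1/3} \, \bigl(1 - s^3\bigr)^{-1/3},
\]
valid on the real branch where $W < 0$ for $s > 0$ small. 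Feeding the leading approximation $W \sim -s/3^{1/3}$ back into the right-hand side and expanding the two cube-root factors to order $s^3$ recovers \eqref{eq:expVcrit} in a single pass.

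The main step is not really an obstacle but rather a fortunate algebraic observation: the cancellations that produce $\tfrac{2}{3} - 2W^2$ and $\tfrac{1}{3} - W^2 - W^3$ at $p = 1/2$, $U = U_c$ collapse the cube-root inversion to a one-line bootstrap. Without them, the same expansion could still be recovered by computing four Taylor coefficients of $\hat y$ at $V = \sqrt 3/3$ and applying Lagrange--Bürmann on the cube-root branch---routine but messier, and in any case instantly verified in the companion Maple file \cite{Maple}.
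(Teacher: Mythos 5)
Your proof is correct and follows essentially the same route as the paper's: identify $y_\pm(1/2,t_c)$ via the stationary points of $\hat y(1/2,U_c,\cdot)$, invoke Lemma~\ref{lemm:parT} for analyticity, and perform a singular inversion at $y_+=2$. The one difference is that the paper delegates the arithmetic to the companion Maple file, whereas you exploit the explicit factorization $\hat y(1/2,U_c,\tfrac{\sqrt3}{3}+W)-2=\frac{2W^3}{1/3-W^2-W^3}$ (which I checked: at $p=1/2$, $U_c=(3-\sqrt3)/6$ the numerator becomes $\tfrac23-2W^2$ and the denominator $\tfrac13-W^2-W^3$, and $W=-1$ gives $y_-=-4$), making the cube-root bootstrap transparent by hand and matching all four coefficients of \eqref{eq:expVcrit}.
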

\begin{proof}
From Lemma \ref{lemm:parT}, we already know that $y \mapsto V(1/2,U(t_c^3),y)$ is analytic on the domain $\mathbb C \setminus \left( (-\infty , y_-(1/2,t_c)] \cup [y_+(1/2,t_c), +\infty) \right)$. We also know that $y_\pm(1/2,t_c)$ are the values of $\hat y (1/2, U(t_c^3),V)$ at $V=V_\pm(1/2,t_c)$ the two stationary points of $\hat y$ enclosing $0$. We can easily compute the corresponding values in the Maple file \cite{Maple}. The expansion is then easily obtained by singular inversion.
\end{proof}

\bigskip

This Lemma combined with the rational expressions in terms of $V$ that we have allow to immediately compute asymptotic expansions. Indeed, the expressions $\hat T (1/2,U(t_c^3),V)$ and $\hat \Delta (1/2,V)$ defined respectively in Lemma \ref{lemm:parT} and Lemma \ref{lem:serDelta} are singular only when $V$ is singular, and we can get an asymptotic expansion at their unique dominant singularity by plugging the expansion of $V$ in their expression. As usual, calculations are available in the Maple companion file \cite{Maple}.

\bigskip

We get the following expansion for $T$:
\begin{equation} \label{eq:Ttcritexp}
T(1/2,t_c,t_c \, y) = \frac{\sqrt{3}}{2} - \frac{3^{5/6}}{2} \left( 1- \frac{y}{2} \right)^{2/3} + \frac{\sqrt{3}}{2} \left( 1- \frac{y}{2} \right) -  3^{1/6} \left( 1- \frac{y}{2} \right)^{4/3} + \mathcal O \left(\left( 1- \frac{y}{2} \right)^{5/3} \right),
\end{equation}
and as a consequence
\begin{equation} \label{eq:Ttcritexpcoefk}
t_c^k T_k(1/2,t_c) \underset{k \to \infty}{\sim} \frac{-3^{5/6}}{2 \Gamma(-2/3)} \, 2^k \, k^{-5/3}.
\end{equation}
Similarly, using the expression \eqref{eq:tildeF} gives the asymptotic expansion of the weights $\sqrt{t_c^3/2}^{-k} q_k(1/2,t_c)$:
\begin{equation} \label{eq:tildeFcritexp}
\tilde F(1/2,t_c,z) = 2\, \sqrt{3} - 2 \, 3^{5/6} \left( 1- 2 z \right)^{2/3} + + \mathcal O \left(\left( 1- 2z \right)^{5/3} \right),
\end{equation}
and as a consequence
\begin{equation} \label{eq:qkcritexp}
\sqrt{t_c^3/2}^{-k} q_k(1/2,t_c) \underset{k \to \infty}{\sim} \frac{-2 \, 3^{5/6}}{ \Gamma(-2/3)} \, 2^{-k} \, k^{-5/3}.
\end{equation}
Note that these two singular expansions and the corresponding asymptotics were established in~\cite{BeCuMie} using different, and more involved techniques. The rational parametrization that we have simplifies this analysis a lot.

\bigskip

Using $\hat \Delta$, we get the following expansion:
\begin{equation} \label{eq:deltacritexp}
\Delta \left( 1/2 ,  z \right) \underset{z \to 1/2}{\sim} \left( \frac{32 \,3^{\frac{5}{6}}}{351}+\frac{8 \,3^{\frac{1}{3}}}{117} \right) \, \left( 1 - 2 z \right)^{-4/3},
\end{equation}
giving
\begin{equation} \label{eq:deltakexp}
\delta_k(1/2)
\underset{k \to \infty}{\sim}
\frac{1}{\Gamma(4/3)} \left( \frac{32 \,3^{\frac{5}{6}}}{351}+\frac{8 \,3^{\frac{1}{3}}}{117} \right) 2^{-k} \, k^{1/3}.
\end{equation}
Finally, using the expression \eqref{eq:Thetay}, we get
\begin{equation} \label{eq:thetacritexp}
\Theta \left( 1/2 ,  y \right) \underset{y \to 2}{\sim} \left( \frac{8 \,3^{\frac{5}{6}}}{351}+\frac{2 \,3^{\frac{1}{3}}}{117} \right) \, \left( 1 - y/2 \right)^{-4/3},
\end{equation}
giving
\begin{equation} \label{eq:thetakexp}
\theta_k(1/2)
\underset{k \to \infty}{\sim}
\frac{1}{\Gamma(4/3)} \left( \frac{8 \,3^{\frac{5}{6}}}{351}+\frac{2 \,3^{\frac{1}{3}}}{117} \right) 2^{k} \, k^{1/3}.
\end{equation}

\bibliographystyle{plain}
\bibliography{Perco}

\begin{thebibliography}{10}

\bibitem{Maple}
Maple companion file.
\newblock Available on the author's webpage:
  \url{http://www.normalesup.org/~menard/research.html}.

\bibitem{AM}
Marie Albenque and Laurent Ménard.
\newblock Geometric properties of spin clusters in random triangulations
  coupled with an ising model.
\newblock {\em preprint}, 2021.

\bibitem{AngelPerco}
O.~Angel.
\newblock Growth and percolation on the uniform infinite planar triangulation.
\newblock {\em Geom. Funct. Anal.}, 13(5):935--974, 2003.

\bibitem{AngelCurien}
O.~Angel and N.~Curien.
\newblock Percolations on random maps {I}: half-plane models.
\newblock {\em Ann.~Inst.Henri~Poincar{\'e}}, 51(2):405--431, 2015.

\bibitem{AngelSchramm}
O.~Angel and O.~Schramm.
\newblock Uniform infinite planar triangulations.
\newblock {\em Comm. Math. Phys.}, 241(2-3):191--213, 2003.

\bibitem{SLEdim}
Vincent Beffara.
\newblock The dimension of the {SLE} curves.
\newblock {\em Ann. Probab.}, 36(4):1421--1452, 2008.

\bibitem{BCM}
O.~Bernardi, N.~Curien, and G.~Miermont.
\newblock A {B}oltzmann approach to percolation on random triangulations.
\newblock {\em Canad. J. Math.}, 71(1):1--43, 2019.

\bibitem{BeCuMie}
O.~Bernardi, N.~Curien, and G.~Miermont.
\newblock A {B}oltzmann approach to percolation on random triangulations.
\newblock {\em Canad. J. Math.}, 71(1):1--43, 2019.

\bibitem{BeHoSun}
O.~Bernardi, N.~Holden, and X.~Sun.
\newblock Percolation on triangulations: a bijective path to {L}iouville
  quantum gravity.
\newblock {\em arXiv preprint arXiv:1807.01684}, 2018.

\bibitem{BBD}
G.~Borot, J.~Bouttier, and B.~Duplantier.
\newblock Nesting statistics in the {O}(n) loop model on random planar maps.
\newblock {\em arXiv preprint arXiv:1605.02239}, 2016.

\bibitem{BBGa}
G.~Borot, J.~Bouttier, and E.~Guitter.
\newblock A recursive approach to the {O}(n) model on random maps via nested
  loops.
\newblock {\em J.~Phys.~A}, 45(4):045002, 2011.

\bibitem{BBGc}
G.~Borot, J.~Bouttier, and E.~Guitter.
\newblock Loop models on random maps via nested loops: the case of domain
  symmetry breaking and application to the {P}otts model.
\newblock {\em J.~Phys.~A}, 45(49):494017, 2012.

\bibitem{BBGb}
G.~Borot, J.~Bouttier, and E.~Guitter.
\newblock More on the {O}(n) model on random maps via nested loops: loops with
  bending energy.
\newblock {\em J.~Phys.~A}, 45(27):275206, 2012.

\bibitem{BorotEG}
G.~Borot and E.~Garcia-Failde.
\newblock Nesting statistics in the {O}(n) loop model on random maps of
  arbitrary topologies.
\newblock {\em arXiv preprint arXiv:1609.02074}, 2018.

\bibitem{BDFG}
Jérémie Bouttier, Philippe Di~Francesco, and Emmanuel Guitter.
\newblock Planar maps as labeled mobiles.
\newblock {\em Electron. J. Combin.}, 11(1):Research Paper 69, 27, 2004.

\bibitem{BuddInfBoltz}
Timothy Budd.
\newblock The peeling process of infinite {B}oltzmann planar maps.
\newblock {\em Electron. J. Combin.}, 23(1):Paper 1.28, 37, 2016.

\bibitem{BC21}
Timothy Budd and Nicolas Curien.
\newblock Simple peeling of planar maps with application to site percolation.
\newblock {\em Canadian Journal of Mathematics}, page 1–35, 2021.

\bibitem{CuKo}
N.~Curien and I.~Kortchemski.
\newblock Percolation on random triangulations and stable looptrees.
\newblock {\em Probab. Theory Related Fields}, 163(1-2):303--337, 2015.

\bibitem{CR}
Nicolas Curien and Lo\"{\i}c Richier.
\newblock Duality of random planar maps via percolation.
\newblock {\em Ann. Inst. Fourier (Grenoble)}, 70(6):2425--2471, 2020.

\bibitem{Eynard}
Bertrand Eynard.
\newblock {\em Counting surfaces}, volume~70 of {\em Progress in Mathematical
  Physics}.
\newblock Birkh\"{a}user/Springer, [Cham], 2016.
\newblock CRM Aisenstadt chair lectures.

\bibitem{FS}
{Ph.} Flajolet and R.~Sedgewick.
\newblock {\em Analytic combinatorics}.
\newblock Cambridge University Press, Cambridge, 2009.

\bibitem{GarbanPeteSchramm}
Christophe Garban, G\'{a}bor Pete, and Oded Schramm.
\newblock Pivotal, cluster, and interface measures for critical planar
  percolation.
\newblock {\em J. Amer. Math. Soc.}, 26(4):939--1024, 2013.

\bibitem{GMSS}
M.~Gorny, E.~Maurel-Segala, and A.~Singh.
\newblock The geometry of a critical percolation cluster on the {UIPT}.
\newblock {\em Ann.~Inst.Henri~Poincar{\'e}}, 54(4):2203--2238, 2018.

\bibitem{HS}
N.~Holden and X.~Sun.
\newblock Convergence of uniform triangulations under the cardy embedding.
\newblock {\em arXiv preprint arXiv:1905.13207}, 2019.

\bibitem{KPZ}
V.~G. Knizhnik, A.~M. Polyakov, and A.~B. Zamolodchikov.
\newblock Fractal structure of {$2$}{D}-quantum gravity.
\newblock {\em Modern Phys. Lett. A}, 3(8):819--826, 1988.

\bibitem{MN}
Laurent M\'{e}nard and Pierre Nolin.
\newblock Percolation on uniform infinite planar maps.
\newblock {\em Electron. J. Probab.}, 19(79):1--27, 2014.

\bibitem{MiermontInvariance}
G.~Miermont.
\newblock An invariance principle for random planar maps.
\newblock {\em Discrete~Math.~Theor.~Comput.~Sci}, 2006.

\bibitem{Richier}
Loïc Richier.
\newblock Universal aspects of critical percolation on random half-planar maps.
\newblock {\em Electron. J. Probab.}, 20:Paper No. 129, 45, 2015.

\end{thebibliography}

\end{document}